\documentclass[reqno, 12pt]{amsart}
\pdfoutput=1
\makeatletter
\let\origsection=\section \def\section{\@ifstar{\origsection*}{\mysection}}
\def\mysection{\@startsection{section}{1}\z@{.7\linespacing\@plus\linespacing}{.5\linespacing}{\normalfont\scshape\centering\S}}
\makeatother

\usepackage{amsmath,amssymb,amsthm}
\usepackage{mathrsfs}
\usepackage{mathabx}\changenotsign
\usepackage{dsfont}
\usepackage{bbm}

\usepackage{xcolor}
\usepackage[backref=section]{hyperref}
\usepackage[ocgcolorlinks]{ocgx2}
\hypersetup{
	colorlinks=true,
	linkcolor={red!60!black},
	citecolor={green!60!black},
	urlcolor={blue!60!black},
}


\usepackage[open,openlevel=2,atend]{bookmark}

\usepackage[abbrev,msc-links,backrefs]{amsrefs}
\usepackage{doi}

\renewcommand{\PrintDOI}[1]{\doi{#1}}

\usepackage[T1]{fontenc}
\usepackage{lmodern}
\usepackage[babel]{microtype}
\usepackage[english]{babel}

\linespread{1.3}
\usepackage{geometry}
\geometry{left=27.5mm,right=27.5mm, top=25mm, bottom=25mm}

\numberwithin{equation}{section}
\numberwithin{figure}{section}

\usepackage{enumitem}
\def\rmlabel{\upshape({\itshape \roman*\,})}

\def\alabel{\upshape({\itshape \alph*\,})}

\def\nlabel{\upshape({\itshape \arabic*\,})}

\let\polishlcross=\l
\def\l{\ifmmode\ell\else\polishlcross\fi}

\let\setminus=\smallsetminus

\makeatletter
\def\moverlay{\mathpalette\mov@rlay}
\def\mov@rlay#1#2{\leavevmode\vtop{   \baselineskip\z@skip \lineskiplimit-\maxdimen
		\ialign{\hfil$\m@th#1##$\hfil\cr#2\crcr}}}
\newcommand{\charfusion}[3][\mathord]{
	#1{\ifx#1\mathop\vphantom{#2}\fi
		\mathpalette\mov@rlay{#2\cr#3}
	}
	\ifx#1\mathop\expandafter\displaylimits\fi}
\makeatother

\newcommand{\dcup}{\charfusion[\mathbin]{\cup}{\cdot}}

\DeclareFontFamily{U}  {MnSymbolC}{}
\DeclareSymbolFont{MnSyC}         {U}  {MnSymbolC}{m}{n}
\DeclareFontShape{U}{MnSymbolC}{m}{n}{
	<-6>  MnSymbolC5
	<6-7>  MnSymbolC6
	<7-8>  MnSymbolC7
	<8-9>  MnSymbolC8
	<9-10> MnSymbolC9
	<10-12> MnSymbolC10
	<12->   MnSymbolC12}{}
\DeclareMathSymbol{\powerset}{\mathord}{MnSyC}{180}
\DeclareMathSymbol{\YY}{\mathord}{MnSyC}{42}

\usepackage{tikz}
\usetikzlibrary{calc,decorations.pathmorphing}
\usetikzlibrary{arrows,decorations.pathreplacing}
\pgfdeclarelayer{background}
\pgfdeclarelayer{foreground}
\pgfdeclarelayer{front}
\pgfsetlayers{background,main,foreground,front}

\usepackage{multicol}
\usepackage{subcaption}
\captionsetup[subfigure]{labelfont=rm}

\let\epsilon=\varepsilon
\let\eps=\epsilon
\let\phi=\varphi
\let\rho=\varrho
\let\theta=\vartheta

\def\NN{{\mathds N}}
\def\QQ{{\mathds Q}}

\def\ZZ{{\mathds Z}}

\def\llex{<_{\mathrm{lex}}}

\newcommand{\gS}{\mathfrak{S}}

\newtheoremstyle{note}  {4pt}  {4pt}  {\sl}  {}  {\bfseries}  {.}  {.5em}          {}
\newtheoremstyle{introthms}  {3pt}  {3pt}  {\itshape}  {}  {\bfseries}  {.}  {.5em}          {\thmnote{#3}}
\newtheoremstyle{remark}  {2pt}  {2pt}  {\rm}  {}  {\bfseries}  {.}  {.3em}          {}

\theoremstyle{plain}
\newtheorem{theorem}{Theorem}[section]
\newtheorem{lemma}[theorem]{Lemma}

\newtheorem{cor}[theorem]{Corollary}

\newtheorem{claim}[theorem]{Claim}

\theoremstyle{note}
\newtheorem{dfn}[theorem]{Definition}
\newtheorem{definition}[theorem]{Definition}

\theoremstyle{remark}
\newtheorem{remark}[theorem]{Remark}

\usepackage{lineno}
\newcommand*\patchAmsMathEnvironmentForLineno[1]{
	\expandafter\let\csname old#1\expandafter\endcsname\csname #1\endcsname
	\expandafter\let\csname oldend#1\expandafter\endcsname\csname end#1\endcsname
	\renewenvironment{#1}
	{\linenomath\csname old#1\endcsname}
	{\csname oldend#1\endcsname\endlinenomath}}
\newcommand*\patchBothAmsMathEnvironmentsForLineno[1]{
	\patchAmsMathEnvironmentForLineno{#1}
	\patchAmsMathEnvironmentForLineno{#1*}}
\AtBeginDocument{
	\patchBothAmsMathEnvironmentsForLineno{equation}
	\patchBothAmsMathEnvironmentsForLineno{align}
	\patchBothAmsMathEnvironmentsForLineno{flalign}
	\patchBothAmsMathEnvironmentsForLineno{alignat}
	\patchBothAmsMathEnvironmentsForLineno{gather}
	\patchBothAmsMathEnvironmentsForLineno{multline}
}

\usepackage{scalerel}

\makeatletter
\newcommand{\overrighharpoonup}[1]{\ThisStyle{%
		\vbox {\m@th\ialign{##\crcr
				\rightharpoonupfill \crcr
				\noalign{\kern-\p@\nointerlineskip}
				$\hfil\SavedStyle#1\hfil$\crcr}}}}

\def\rightharpoonupfill{%
	$\SavedStyle\m@th\mkern+0.8mu\cleaders\hbox{$\shortbar\mkern-4mu$}\hfill\rightharpoonuptip\mkern+0.8mu$}

\def\rightharpoonuptip{%
	\raisebox{\z@}[2pt][1pt]{\scalebox{0.55}{$\SavedStyle\rightharpoonup$}}}

\def\shortbar{%
	\smash{\scalebox{0.55}{$\SavedStyle\relbar$}}}
\makeatother

\makeatletter
\newcommand{\overlefharpoonup}[1]{\ThisStyle{%
		\vbox {\m@th\ialign{##\crcr
				\leftharpoonupfill \crcr
				\noalign{\kern-\p@\nointerlineskip}
				$\hfil\SavedStyle#1\hfil$\crcr}}}}

\def\leftharpoonupfill{%
	$\SavedStyle\m@th\mkern+0.8mu\cleaders\hbox{$\shortbar\mkern-4mu$}\hfill\leftharpoonuptip\mkern+0.8mu$}

\def\leftharpoonuptip{%
	\raisebox{\z@}[2pt][1pt]{\scalebox{0.55}{$\SavedStyle\leftharpoonup$}}}

\let\lra=\longrightarrow

\def\lefty{\mathrm{left}}
\def\righty{\mathrm{right}}
\def\LL{\mathrm{L}}
\def\Ra\mathrm{R}

\makeatletter
\newsavebox\myboxA
\newsavebox\myboxB
\newlength\mylenA

\newcommand*\xoverline[2][0.75]{%
	\sbox{\myboxA}{$\m@th#2$}%
	\setbox\myboxB\null
	\ht\myboxB=\ht\myboxA%
	\dp\myboxB=\dp\myboxA%
	\wd\myboxB=#1\wd\myboxA
	\sbox\myboxB{$\m@th\overline{\copy\myboxB}$}
	\setlength\mylenA{\the\wd\myboxA}
	\addtolength\mylenA{-\the\wd\myboxB}%
	\ifdim\wd\myboxB<\wd\myboxA%
	\rlap{\hskip 0.5\mylenA\usebox\myboxB}{\usebox\myboxA}%
	\else
	\hskip -0.5\mylenA\rlap{\usebox\myboxA}{\hskip 0.5\mylenA\usebox\myboxB}%
	\fi}
\makeatother

\DeclareSymbolFont{symbolsC}{U}{txsyc}{m}{n}
\SetSymbolFont{symbolsC}{bold}{U}{txsyc}{bx}{n}
\DeclareFontSubstitution{U}{txsyc}{m}{n}
\DeclareMathSymbol{\strictif}{\mathrel}{symbolsC}{74}

\DeclareSymbolFont{stmry}{U}{stmry}{m}{n}
\DeclareMathSymbol\arrownot\mathrel{stmry}{"58}
\DeclareMathSymbol\Arrownot\mathrel{stmry}{"59}
\def\longarrownot{\mathrel{\mkern5.5mu\arrownot\mkern-5.5mu}}

\def\nlra{\longarrownot\longrightarrow}

\DeclareMathOperator{\ER}{ER}

\begin{document}
\dedicatory{Dedicated to the memory of Ronald Graham}
\title{On quantitative aspects of a canonisation theorem for edge-orderings}

\author[Chr.~Reiher]{Christian Reiher}
\address{Fachbereich Mathematik, Universit\"at Hamburg, Hamburg, Germany}
\email{Christian.Reiher@uni-hamburg.de}
\email{schacht@math.uni-hamburg.de}
\email{kevinsames@gmx.de} 

\author[V.~R\"{o}dl]{Vojt\v{e}ch R\"{o}dl}
\address{Department of Mathematics, 
Emory University, Atlanta, USA}
\email{vrodl@emory.edu}
\email{marcelo.tadeu.sales@emory.edu}

\thanks{The second author is supported by NSF grant DMS 1764385.}

\author[M.~Sales]{Marcelo Sales}
\thanks{The third author is partially supported by NSF grant DMS 1764385.}

\author[K.~Sames]{Kevin Sames}

\author[M.~Schacht]{Mathias Schacht}
\thanks{The fifth author is supported by the ERC (PEPCo 724903)}

\begin{abstract}
	For integers $k\ge 2$ and $N\ge 2k+1$ there are $k!2^k$ {\it canonical orderings} of 
	the edges of the complete $k$-uniform hypergraph with vertex set $[N] = \{1,2,\dots, N\}$.
	These are exactly the orderings with the property that any two subsets $A, B\subseteq [N]$ 
	of the same size induce isomorphic suborderings. We study the associated {\it canonisation} problem 
	to estimate, given $k$ and $n$, the least integer $N$ such that no matter how the $k$-subsets 
	of $[N]$ are ordered there always exists an $n$-element set $X\subseteq [N]$ whose $k$-subsets 
	are ordered canonically. For fixed $k$ we prove lower and upper bounds on these numbers 
	that are $k$ times iterated exponential 
	in a polynomial of $n$. 
\end{abstract}

\maketitle

\section{Introduction}\label{sec:intro}

We consider numerical aspects of an unpublished result of Klaus Leeb from the early seventies
(see~\cites{NPRV85, NR17}). For a natural number $N$ we set $[N]=\{1,2,\dots,N\}$. 
Given a set $X$ and a nonnegative integer $k$ we write $X^{(k)}=\{e\subseteq X\colon |e|=k\}$ 
for the set of all $k$-element subsets of $X$.  

\subsection{Ramsey theory}
Recall that for any integers $k, n, r\ge 1$, Ramsey's theorem~\cite{Ramsey30} informs us 
that every sufficiently large integer $N$ satisfies the partition relation 
\begin{equation}\label{eq:1500}
	N\longrightarrow (n)^k_r\,,
\end{equation}
meaning that for every colouring of $[N]^{(k)}$ with $r$ colours there exists a set $X\subseteq [N]$
of size~$n$ such that~$X^{(k)}$ is monochromatic. The least number~$N$ validating~\eqref{eq:1500}
is denoted by~$\mathrm{R}^{(k)}(n, r)$. The {\it negative} partition relation $N\nlra (n)^k_r$
expresses the fact that~\eqref{eq:1500} fails.

For colourings $f\colon [N]^{(k)}\longrightarrow \NN$ with infinitely many colours, however, 
one can, in general, not even hope to obtain a monochromatic set of size $k+1$. 
For instance, it may happen 
that every $k$-element subset of $[N]$ has its own private colour. 
Nevertheless, Erd\H{o}s and Rado~\cites{ER52, ER50}
established a meaningful generalisation of Ramsey's theorem to colourings with infinitely many 
colours, even if the ground sets stay finite. In this context it is convenient to regard such 
colourings as equivalence relations, two $k$-sets being in the same equivalence class if and only 
if they are of the same colour. 
Now Erd\H{o}s and Rado~\cites{ER52, ER50} call an equivalence relation 
on $[N]^{(k)}$ {\it canonical} if for any two subsets $A, B\subseteq [N]$ of the same size 
the equivalence relations induced on~$A^{(k)}$ and~$B^{(k)}$ correspond to each other via 
the order preserving map between $A$ and $B$. So, roughly speaking, an equivalence relation is 
canonical if you cannot change its essential properties by restricting your attention to a subset. 
It turns out that for~$N\ge 2k+1$ there are always exactly $2^k$ canonical equivalence relations 
on $[N]^{(k)}$ that can be parametrised by subsets~$I\subseteq [k]$ in the following natural way. 
Let $x, y\in [N]^{(k)}$ be two $k$-sets and let $x=\{x_1, \dots, x_k\}$ 
as well as $y=\{y_1, \dots, y_k\}$ enumerate their elements in increasing order. 
We write $x\equiv_I y$ if and only if $x_i=y_i$ holds for all~$i\in I$.
Now $\{\equiv_I\colon I\subseteq [k]\}$ is the collection of all canonical equivalence 
relations on~$[N]^{(k)}$. The {\it canonical Ramsey theorem} of Erd\H{o}s and Rado 
asserts that given two integers~$k$ and~$n$ there exists an integer $\ER^{(k)}(n)$ such that 
for every $N\ge\ER^{(k)}(n)$ and every equivalence relation~$\equiv$ on~$[N]^{(k)}$ there exists a 
set $X\subseteq [N]$ of size $n$ with the property that $\equiv$ is canonical on~$X^{(k)}$. 

This result sparked the development of {\it canonical Ramsey theory} in the seventies. 
Let us commemorate some contributions to the area due to {\sc Ronald Graham}, 
who was among the main protagonists of Ramsey theory in the $20^{\mathrm{th}}$ century: Together 
with Erd\H{o}s~\cite{EG80}, he established a canonical version of van der Waerden's theorem, 
stating that if for $N\ge \mathrm{EG}(k)$ we colour $[N]$ with arbitrarily many colours, then there  
exists an arithmetic progression of length $k$ which is either monochromatic or receives~$k$ 
distinct colours. With Deuber, Pr\"omel, 
and Voigt~\cite{DGPV} he later obtained a canonical version of the Gallai-Witt theorem, 
which is more difficult to state --- in $d$ dimensions the canonical colourings 
for a given configuration $F\subseteq \ZZ^d$ are now parametrised by vector 
subspaces $U\subseteq \QQ^d$ having a basis of vectors of the form $x-y$ with $x, y\in F$.
Interestingly, {\sc Ronald Graham} was also among the small number of co-authors of Klaus 
Leeb~\cites{GLR, GLK2} and in joint work with Rothschild they settled a famous conjecture of Rota 
on Ramsey properties of finite vector spaces.

The canonisation discussed here concerns linear orderings. We say that a linear 
order $([N]^{(k)}, \strictif)$ is {\it canonical} if for any two sets $A, B\subseteq [N]$
of the same size the order preserving map from $A$ to $B$ induces an order preserving 
map from $(A^{(k)}, \strictif)$ to $(B^{(k)}, \strictif)$. It turns out that for $N\ge 2k+1$ 
there are exactly $k!2^k$ canonical orderings of $[N]^{(k)}$ that can uniformly be parametrised by 
pairs $(\eps, \sigma)$ consisting of a {\it sign vector} $\eps\in \{-1,+1\}^k$ and a 
permutation $\sigma\in \gS_k$, i.e., a bijective map $\sigma\colon [k]\lra [k]$. 

\begin{definition}\label{def:canonical}
	Let $N, k\ge 1$ be integers, and let $(\eps, \sigma)\in \{+1, -1\}^k\times \gS_k$ be a pair 
	consisting of a sign vector $\eps=(\eps_1, \dots, \eps_k)$ and a permutation $\sigma$ of $[k]$. 
	The ordering $\strictif$ on~$[N]^{(k)}$ 
	{\it associated with $(\eps, \sigma)$} is the unique ordering with the property that  
	if 
	\[
		1\le a_1< \dots <a_k\le N
		\quad \text{ and } \quad 1\le b_1< \dots <b_k\le N\,,
	\]
	then
	\[
		 \{a_1,\dots,a_k\} \strictif \{b_1,\dots, b_k\}
		 \,\,\, \Longleftrightarrow \,\,\,
		 (\epsilon_{\sigma(1)}a_{\sigma(1)},\dots,\epsilon_{\sigma(k)}a_{\sigma(k)})
		 \llex 
		 (\epsilon_{\sigma(1)}b_{\sigma(1)},\dots,\epsilon_{\sigma(k)}b_{\sigma(k)})\,,
	\]
	where $\llex$ denotes the lexicographic order on $\ZZ^k$.
\end{definition}

It is readily seen that such an ordering $\strictif$ associated with $(\eps, \sigma)$ does 
indeed exist, that it is uniquely determined, and, moreover, canonical. 
Conversely, for $N\ge 2k+1$ every canonical ordering is 
of this form (see Lemma~\ref{lem:canonical} below). 
We proceed by stating the aforementioned unpublished theorem due to Klaus Leeb.

\begin{theorem}[Leeb]\label{th:leeb}
	For all positive integers $k$ and $n$ there exists an integer $\LL^{(k)}(n)$ such that for 
	every~$N\ge \LL^{(k)}(n)$ and every ordering $([N]^{(k)}, \strictif)$
	there is a set $X\subseteq [N]$ of size $n$ such that $(X^{(k)}, \strictif)$ is canonical. 
\end{theorem}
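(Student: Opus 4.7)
My plan is to derive Leeb's theorem from an iterated application of Ramsey's theorem, based on a pattern-based reformulation of canonicity. The key observation is that a linear ordering $(X^{(k)}, \strictif)$ is canonical if and only if, for any two $k$-subsets $x, y \subseteq X$, the direction under $\strictif$ depends only on the order-isomorphism type of the ordered pair $(x, y)$ inside the $\ell$-element set $x \cup y$, where $\ell = |x \cup y| \in \{k+1, \ldots, 2k\}$. I shall call this order-isomorphism type the \emph{pattern} of $(x, y)$. This is essentially a reformulation of Definition~\ref{def:canonical}: any order-preserving bijection between two equicardinal subsets of $X$ sends an ordered pair of $k$-subsets to one with the same pattern, and conversely two ordered pairs with the same pattern are related by such a bijection between their unions.

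Guided by this reformulation, for each $\ell \in \{k+1, \ldots, 2k\}$ I define a colouring $\chi_\ell \colon [N]^{(\ell)} \to \Lambda_\ell$ that assigns to an $\ell$-subset $S$ the isomorphism type of the ordered structure $(S^{(k)}, \strictif)$, where $\Lambda_\ell$ is a finite set of size at most $\binom{\ell}{k}!$. I then apply Ramsey's theorem iteratively, traversing $\ell = 2k, 2k-1, \ldots, k+1$ in order: at each step I restrict the current subset to a $\chi_\ell$-monochromatic one, observing that taking subsets preserves monochromaticity with respect to previously handled colourings. After $k$ rounds I obtain a set $X \subseteq [N]$ of the required size $n$ that is monochromatic for every $\chi_\ell$. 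Two $\ell$-subsets with equal $\chi_\ell$-colour have the property that the unique order-preserving bijection between them preserves $\strictif$ on their $k$-subsets; consequently any two ordered pairs of $k$-subsets of $X$ with the same pattern are compared in the same direction under $\strictif$, and the reformulation then yields that $(X^{(k)}, \strictif)$ is canonical.

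The main obstacle is controlling the resulting quantitative dependency. Each round of Ramsey's theorem for $\ell$-uniform hypergraphs with $O_k(1)$ colours contributes a tower of height $\ell-1$ via the Erd\H{o}s--Rado bound, so the naïve iteration yields a bound on $\LL^{(k)}(n)$ that is a tower of height roughly $2k-1$, which is considerably weaker than the $k$-fold iterated exponential bound announced in the abstract. Sharpening this presumably requires a more delicate colouring strategy, perhaps working at a single well-chosen level while directly exploiting the structure of the canonical orderings parametrised by $(\eps,\sigma)$, or appealing to a canonical Ramsey theorem tailored to linear orders.
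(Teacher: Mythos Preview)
Your existence argument is correct and takes a different, more elementary route than the paper. You iterate Ramsey's theorem over all levels $\ell\in\{k+1,\dots,2k\}$, forcing the isomorphism type of $(S^{(k)},\strictif)$ to be constant across $\ell$-subsets $S$, after which canonicity follows straight from the definition with no further structural analysis. The paper instead (Remark~\ref{rem:1945}) applies Ramsey's theorem \emph{once}, at the single level $k+2$, obtaining a set that is only sign-definite and permutation-definite; it then invokes the structural Lemma~\ref{lem:1458} to thin down to a canonical subset at a multiplicative cost of $2^{k-1}$. Your route trades that lemma for additional Ramsey rounds: conceptually cleaner, but quantitatively expensive. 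The paper's simple argument already gives tower height $k+1$, and the full machinery of Section~\ref{sec:upper} reaches the optimal height~$k$. One slip in your quantitative discussion: iterating Ramsey over the $k$ levels $k+1,\dots,2k$ gives a tower of height $\sum_{\ell=k+1}^{2k}(\ell-1)=k(3k-1)/2$, not $2k-1$, since tower heights add under composition and each round at level $\ell$ contributes its own $\ell-1$ to the total.
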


In the sequel, $\LL^{(k)}(n)$ will always refer to the least number with this property. As we shall
prove, for fixed $k\ge 2$ the dependence of $\LL^{(k)}(n)$ on $n$ is $k$ times exponential in a 
polynomial of $n$.

\subsection{Bounds}
The study of the Ramsey numbers $\mathrm{R}^{(k)}(n, r)$ has received extensive 
interest (see e.g.~\cites{EHR65, CFS10, GRS90, EH72}). To state some of the currently known bounds, 
one recursively defines {\it tower functions} $t_k\colon\NN\lra\NN$ by 
\[
	t_0(x)=x
	\quad \text{ and } \quad 
	t_{i+1}(x)=2^{t_i(x)}\,.
\]
Arguing probabilistically Erd\H{o}s~\cite{Er47} showed 
that $\mathrm{R}^{(2)}(n, 2)\ge 2^{(n+1)/2}$ and Erd\H{o}s, Hajnal, and Rado \cite{EHR65} proved later 
that $\mathrm{R}^{(k)}(n, 2)\ge t_{k-2}(c_k n^2)$ for $k>2$, where $c_k$ is a positive constant. 
They also proved $\mathrm{R}^{(k)}(n, 4)\ge t_{k-1}(c_k n)$ for all $k\ge 2$, 
where $c_k>0$ is an absolute constant. In the other direction, it is known 
that $\mathrm{R}^{(k)}(n, r)\le t_{k-1}(C_{k, r}n)$ (see~\cite{ER52}).

Estimates on the canonical Ramsey numbers $\ER^{(k)}(n)$ were studied in \cites{LR95, Sh96}. 
Notably, Lefmann and R\"{o}dl proved the lower bound $\ER^{(k)}(n)\ge t_{k-1}(c_kn^2)$,
while Shelah obtained the complementary upper bound $\ER^{(k)}(n)\le t_{k-1}(C_kn^{8(2k-1)})$.

Let us now turn our attention to the Leeb numbers $\LL^{(k)}(n)$. For $k=1$, there exist
only two canonical orderings of $[N]^{(1)}$, namely the ``increasing'' and the ``decreasing''
one corresponding to the sign vectors $\eps=(+1)$ and $\eps=(-1)$, respectively. 
Thus the well-known Erd\H{o}s-Szekeres theorem~\cite{ES35} yields the exact 
value $\LL^{(1)}(n)=(n-1)^2+1$. Accordingly, Leeb's theorem 
can be viewed as a multidimensional version of the Erd\H{o}s-Szekeres theorem and we refer 
to~\cite{Lang} for further variations on this theme. Our own results can be summarised as follows.

\begin{theorem}\label{thm:lower}
	If $n\ge 4$ and $R\nlra (n-1)^2_2$, then $\LL^{(2)}(n)> 2^{R}$. Moreover, if $n\ge k\ge 3$, 
	then $\LL^{(k)}(4n+k) > 2^{\LL^{(k-1)}(n)-1}$. 
\end{theorem}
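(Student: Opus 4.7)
Both inequalities follow from stepping-up constructions in the spirit of the Erd\H{o}s--Hajnal lemma for hypergraph Ramsey numbers, adapted from colourings to linear orderings. The common scaffolding: identify the ground set (of size $2^R$ or $2^{M-1}$) with binary strings of the corresponding length via the order-preserving bijection with lexicographic order. For distinct $a, b$ let $d(a, b)$ denote the first coordinate at which they differ, and for a pair $e = \{a, b\}$ put $d(e) = d(a, b)$. Any increasing chain $a_1 < \dots < a_p$ of binary strings has consecutive discriminators $d_i = d(a_i, a_{i+1})$ satisfying $d_i \ne d_{i+1}$, and the ultrametric identity $d(a_i, a_j) = \min_{i \le \ell < j} d_\ell$ holds with the minimum attained at a unique index; these facts will drive both analyses.

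For the base case $k=2$, given $\chi\colon [R]^{(2)} \to \{0,1\}$ witnessing $R \nlra (n-1)^2_2$, one defines a linear order $\prec$ on $[2^R]^{(2)}$ whose coarse structure depends on $d(e)$, with $\chi$ prescribing the direction of comparison between different trie-depths and an auxiliary rule (appealing to the sub-trie structure) handling equal discriminators; care is needed to ensure the resulting relation is transitive. Suppose $X = \{x_1 < \cdots < x_n\}$ is canonically ordered of type $(\varepsilon, \sigma)$. The orbit $\{\{x_1, x_i\} : 2 \le i \le n\}$ is canonically ordered monotonically in $i$, and its discriminators $\delta_i = d(x_1, x_i)$ form a weakly decreasing sequence in $[R]$. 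Examining this orbit together with the dual orbit $\{\{x_i, x_n\} : 1 \le i \le n-1\}$ (whose discriminators $d(x_i, x_n)$ are weakly increasing), one either extracts a $\chi$-monochromatic $(n-1)$-clique from the distinct discriminator values, or the auxiliary rule forces the canonical configuration to collapse onto a smaller sub-instance on the suffixes beyond a shared prefix, to which the argument recurses.

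For the inductive step, let $\prec'$ be an ordering on $[M-1]^{(k-1)}$ without canonical $n$-subsets, where $M = \LL^{(k-1)}(n)$. For a $k$-set $e = \{a_1 < \cdots < a_k\} \subseteq [2^{M-1}]$, form the $(k-1)$-tuple $(d_1(e), \dots, d_{k-1}(e))$ of consecutive discriminators and, when these are pairwise distinct, the associated $(k-1)$-set $D(e) \subseteq [M-1]$. Define $\prec$ on $[2^{M-1}]^{(k)}$ by comparing \emph{generic} $k$-sets via $\prec'$ applied to $D(e)$ and $D(e')$, together with prescribed fallback rules for non-generic configurations (again designed to guarantee transitivity). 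Given a putative canonical $(4n+k)$-subset $Y \subseteq [2^{M-1}]$, a sequence of Ramsey-type pigeonholing steps filters $Y$ down to a subset $Y'$ of size $n + \Theta(k)$ all of whose $k$-subsets are generic and whose discriminator sets $D(e)$, as $e$ ranges over $(Y')^{(k)}$, exhaust the $(k-1)$-subsets of a fixed $n$-set $X \subseteq [M-1]$. The canonical $\prec$-structure on $Y'$ then descends to a canonical $\prec'$-structure on $X$, contradicting the choice of $\prec'$; the additive slack in $4n+k$ absorbs the cost of the filterings.

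The main difficulty is twofold. First, canonical orderings are far more rigid than monochromatic colourings: while stepping-up for Ramsey numbers need only defeat $r$ colour classes, here one must rule out all $2^k k!$ canonical types on the putative bad set, each interacting subtly with the trie structure. Second, and related, the construction must actually be a \emph{linear order}, so the tie-breaking and fallback rules have to be designed with transitivity in mind --- a constraint absent from the classical Ramsey stepping-up. Handling both requirements demands a careful case analysis of how the sign vector $\varepsilon$ and permutation $\sigma$ interact with the trie structure induced by the binary representations, together with Ramsey-type extractions of canonical $\prec'$-configurations out of the canonical $\prec$-configuration in the inductive step.
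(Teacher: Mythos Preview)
Your outline captures the stepping-up spirit but leaves a genuine gap in the inductive step. You claim that ``a sequence of Ramsey-type pigeonholing steps filters $Y$ down to a subset $Y'$ of size $n+\Theta(k)$ all of whose $k$-subsets are generic'', i.e., a subset on which all consecutive discriminators are distinct (a comb). This is not true in general, and no amount of pigeonholing will make it so: there are $(4n+k)$-sets in $\{0,1\}^{M-1}$ whose largest comb has size far below $n$. The paper confronts this head-on by introducing the notion of an \emph{impeded} set and proving (Lemma~2.2) that every set is either impeded or contains a left comb and a right comb of combined size at least $\tfrac12(|Z|-k+1)$. The comb case is indeed handled essentially as you describe, via the projection $\pi$ and the lower-uniformity ordering. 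But the impeded case requires a completely separate argument: the paper's ordering first sorts $k$-sets by the parity of an auxiliary statistic $\phi(X)$ (the number of elements on the ``left'' side of the top split), and then exploits the structure of an impeded set to exhibit two specific $(k+1)$-subsets $A,B\subseteq Z$ on which the parity rule forces $A_1\strictif A_{k+1}$ and $B_{k+1}\strictif B_1$ simultaneously, contradicting canonicity. None of this is visible in your proposal --- your unspecified ``fallback rules'' would have to do exactly this work, and the analysis of the impeded case is where the bound $4n+k$ actually comes from.

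Your treatment of $k=2$ is also too loose. The recursion you sketch (``collapse onto a smaller sub-instance on the suffixes'') is not what the paper does and it is unclear it terminates with the right bound. The paper instead designs the ordering so that on every triple $xyz$ the isomorphism type of $(\{xy,xz,yz\},\strictif)$ \emph{determines} both whether $xyz$ is a left or right tree and the value of $g$ on the relevant pair of $\delta$-levels; a canonically ordered $n$-set is then forced to be monochromatic for the four-colouring $h=(f,g\circ\delta)$, which immediately gives $|Z|<n$. The point is not to analyse orbits of a hypothetical canonical set, but to build the ordering so that canonicity directly implies $h$-monochromaticity.
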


Due to the known lower bounds on diagonal Ramsey numbers this implies 
\[
	\LL^{(2)}(n)\ge t_2(n/2)
	\quad \text {as well as } \quad 
	\LL^{(k)}(n)\ge t_{k}(c_kn) \text{ for } k\ge 3\,. 
\]
We offer an upper bound with the same number of exponentiations.

\begin{theorem}\label{thm:upper}
	For every $k\ge 2$ there exists a constant $C_k$ such that $\LL^{(k)}(n)\le t_{k}(n^{C_k})$
	holds for every positive integer $n$.
\end{theorem}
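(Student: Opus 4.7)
I would argue by induction on $k\ge 2$, mirroring the stepping-up structure of Theorem~\ref{thm:lower}: a problem for $k$-uniform orderings on $[N]$ should reduce, at the cost of one exponentiation, to a problem for $(k-1)$-uniform orderings on roughly $\log N$ elements.

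For the \emph{inductive step} ($k\ge 3$, assuming $\LL^{(k-1)}(n)\le t_{k-1}(n^{C_{k-1}})$), consider $\strictif$ on $[N]^{(k)}$ with $N\ge t_k(n^{C_k})$, so that $\log N\ge t_{k-1}(n^{C_k})$. The plan consists of three steps. \emph{(i)} For each pair $\{u<v\}$ in $[k,N-k+1]$ and each position $i\in[k]$, fix a template $(k-1)$-set $S_i$ placing $u,v$ at position $i$ (with $i-1$ elements smaller than $u,v$ and $k-i$ larger) and record the sign $\epsilon_i(u,v)\in\{\pm 1\}$ of the comparison $S_i\cup\{u\}$ versus $S_i\cup\{v\}$ under $\strictif$. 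This gives a colouring of pairs with $2^k=O_k(1)$ colours, so a single application of Ramsey's theorem yields a subset $X_1\subseteq[N]$ of size at least $t_{k-1}(n^{C_k})$ on which the sign vector is constantly some $\epsilon\in\{\pm 1\}^k$. \emph{(ii)} Fixing the pivot $v^\ast=\max X_1$, define the induced $(k-1)$-uniform ordering $\strictif'$ on $(X_1\setminus\{v^\ast\})^{(k-1)}$ by $A\strictif'B$ iff $A\cup\{v^\ast\}\strictif B\cup\{v^\ast\}$; this is well-defined because $v^\ast$ always occupies position $k$. Applying the inductive hypothesis with polynomial parameter $n^D$ (chosen so that $t_{k-1}(n^{DC_{k-1}})\le|X_1|-1$) produces a subset $X_2\subseteq X_1$ of size $n^D$ on which $\strictif'$ is canonical with some $(\epsilon',\sigma')\in\{\pm 1\}^{k-1}\times\gS_{k-1}$. \emph{(iii)} Combine: the sign vector $\epsilon$ from (i) together with $(\epsilon',\sigma')$ from (ii) should, after mild trimming, determine a canonical ordering on $X_2^{(k)}$ whose parameters are built from $(\epsilon',\sigma')$ by inserting a $k$-th coordinate at a position governed by $\epsilon$.

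For the \emph{base case} $k=2$, I would apply Ramsey on triples with six colours (each triple $\{u<v<w\}$ classified by the order type of $\{\{u,v\},\{u,w\},\{v,w\}\}$ under $\strictif$) to obtain, at cost $t_2(n^{O(1)})$, a subset $X$ on which the triple type is constant. Comparison with the list of eight canonical orderings from Definition~\ref{def:canonical} shows that at most two canonical orderings are compatible with each triple type, the remaining ``nested versus crossing'' ambiguity living inside $4$-sets. This residual ambiguity can then be resolved on a further subset of size $n$ via an Erd\H{o}s--Szekeres-type extraction applied to a pair-indexed sequence derived from $\strictif$, at only polynomial cost.

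\textbf{Main obstacle.} Step (iii) is the crux. The canonicity of $\strictif'$ on $X_2^{(k-1)}$ coming from (ii) only controls those $k$-subsets of $X_2\cup\{v^\ast\}$ that contain the pivot $v^\ast$, whereas canonicity of $\strictif$ on $X_2^{(k)}$ is a statement about \emph{every} $k$-subset of $X_2$, in particular those avoiding $v^\ast$. The position-wise sign data from (i) is the instrument for extending canonicity to these pivot-free $k$-sets, and making this propagation precise---without losing more than a polynomial factor in the size of the final subset, so that at least $n$ elements survive---is where the bulk of the technical work must be done.
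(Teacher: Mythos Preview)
Your proposal has a genuine gap, and it is more serious than the ``main obstacle'' you flag. First, step~(i) does not yield what you need. By fixing one external template $S_i$ per position and applying Ramsey on pairs, you make $\eps_i(u,v)$ constant \emph{for that single template only}; this says nothing about $i$-deciders whose elements all lie inside $X_1$, so the ordering is not sign-definite on $X_1$ in the sense required for any propagation argument. Second, even granting sign-definiteness, step~(iii) cannot work as described. Canonicity of $\strictif'$ on $X_2^{(k-1)}$ gives you $(\eps',\sigma')$ with $\sigma'\in\gS_{k-1}$, i.e., the relative priorities of positions $1,\dots,k-1$. To recover $\sigma\in\gS_k$ you must know where position $k$ inserts into this priority list, and that is encoded not in the sign vector $\eps$ but in the $k-1$ permutation values $\sigma_{1k},\dots,\sigma_{k-1,k}$ coming from $ij$-deciders, which are $(k+2)$-element configurations. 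Your pivot construction discards exactly this information: for two $k$-sets $A,B\subseteq X_2$ not containing $v^\ast$, neither $\strictif'$ nor your template signs tell you how $A,B$ compare, and sandwiching via $A\cup\{v^\ast\}$, $B\cup\{v^\ast\}$ fails because both comparisons to the pivoted versions go the same direction. The base case has an analogous hole: Ramsey on triples makes the ordering sign-definite but not governed by pairs, and the residual $4$-set coloring $\{a,d\}$ versus $\{b,c\}$ is not monotone in $b$ or $c$, so an unadorned Erd\H{o}s--Szekeres extraction does not apply.

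The paper takes a completely different route with no induction on $k$. It associates to $\strictif$ a coloring $f\colon[N]^{(k+2)}\to\Gamma$ and reduces its arity step by step via end-homogeneity, but---and this is the key idea---it simultaneously tracks the monotonicity of certain \emph{permutation colorings} $\sigma^m_{ij}$ through each reduction using a tailored lemma (Lemma~\ref{lem:2150}) that preserves monotonicity while passing from $m$-ary to $(m-1)$-ary. The crucial structural facts (Lemmas~\ref{lem:2222} and~\ref{lem:2240}) are that $\sigma^i_{ij}$ is \emph{automatically} monotone once the ordering is governed by $i$-sets; this is what allows the final synchronization of all $\sigma^2_{ij}$ to be done at polynomial cost via an iterated Erd\H{o}s--Szekeres (Lemma~\ref{lem:1731}) rather than by another exponentiation. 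Your inductive scheme has no analogue of this mechanism, and without it the missing permutation data forces an extra tower level.
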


The case $k=2$ of Theorems~\ref{thm:lower} and~\ref{thm:upper} was obtained independently by Conlon, 
Fox, and Sudakov in unpublished work~\cite{Fox}.

\subsection*{Organisation} We prove Theorem~\ref{thm:lower} in Section~\ref{sec:comb}.
The upper bound, Theorem~\ref{thm:upper}, is established in Section~\ref{sec:upper}. 
Lemma~\ref{lem:2150} from this proof turns out to be applicable to Erd\H{o}s-Rado 
numbers as well and Section~\ref{sec:ERS} illustrates this by showing a variant of 
Shelah's result, namely $\ER^{(k)}(n)\le t_{k-1}(C_kn^{6k})$. 

\section{Lower bounds: Constructions}\label{sec:comb}

\subsection{Trees and combs}
Our lower bound constructions take inspiration from the negative stepping-up lemma due
to Erd\H{o}s, Hajnal, and Rado \cite{EHR65}. Notice that in order to prove an inequality 
of the form $\LL^{(k)}(n)>N$ one needs to exhibit a special ordering $([N]^{(k)}, \strictif)$
for which there is no set $Z\subseteq [N]$ of size $n$ such that $\strictif$ orders $Z^{(k)}$ 
canonically. For us, $N=2^m$ will always be a power of two and for reasons of visualisability 
we identify the numbers in $[N]$ with the leaves of a binary tree of height $m$. 
We label the levels of our tree from the bottom to the top with the numbers in $[m]$. 
So the root is at level $1$ and the leaves are immediately above 
level $m$ (see Figure~\ref{fig:optimalline}).

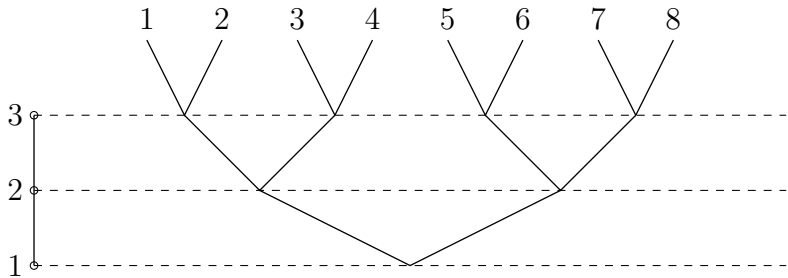
\begin{figure}[h]
\centering
{\hfil \begin{tikzpicture}[scale=1.0]
    \coordinate (A) at (0,0);
    \coordinate (B) at (-2,1);
    \coordinate (C) at (2,1);
    \coordinate (D) at (-3,2);
    \coordinate (E) at (-1,2);
	\coordinate (F) at (1,2);
    \coordinate (G) at (3,2);
    \coordinate [label=above:$1$] (H1) at (-3.5,3);
    \coordinate [label=above:$2$] (H2) at (-2.5,3);
    \coordinate [label=above:$3$] (H3) at (-1.5,3);
    \coordinate [label=above:$4$] (H4) at (-0.5,3);
    \coordinate [label=above:$5$] (H5) at (0.5,3);
    \coordinate [label=above:$6$] (H6) at (1.5,3);
    \coordinate [label=above:$7$] (H7) at (2.5,3);
    \coordinate [label=above:$8$] (H8) at (3.5,3);
    
    \coordinate [label=left:$1$] (L1) at (-5,0);
    \coordinate [label=left:$2$] (L2) at (-5,1);
    \coordinate [label=left:$3$] (L3) at (-5,2);
    \coordinate (R1) at (5,0);
    \coordinate (R2) at (5,1);
    \coordinate (R3) at (5,2);

    \draw[line width=0.5] (C)--(A)--(B);
    \draw[line width=0.5] (D)--(B)--(E);
    \draw[line width=0.5] (F)--(C)--(G);
    \draw[line width=0.5] (H1)--(D)--(H2);
    \draw[line width=0.5] (H3)--(E)--(H4);
    \draw[line width=0.5] (H5)--(F)--(H6);
    \draw[line width=0.5] (H7)--(G)--(H8);
    \draw[line width=0.5] (L1)--(L2)--(L3);
    \draw[dashed] (L1)--(R1);
    \draw[dashed] (L2)--(R2);
    \draw[dashed] (L3)--(R3);
    \draw (L1) circle [radius=0.05];
 	\draw (L2) circle [radius=0.05];
 	\draw (L3) circle [radius=0.05];
    
    \end{tikzpicture}\hfil}
\caption{The binary tree and its levels for $m=3$.}
\label{fig:optimalline}
\end{figure}

Alternatively, we can identify $[N]$ with the set $\{+1, -1\}^m$ of all $\pm1$-vectors of length~$m$
such that the standard ordering on $[N]$ corresponds to the lexicographic one 
on~$\{+1, -1\}^m$. The literature often works with the number $0$ instead of $-1$ here, but for the 
application we have in mind our choice turns out to be advantageous.  

Now every set $X\subseteq [N]$ generates a tree $T_X$, which is the subtree of our binary tree 
given by the union of the paths from the leaves in $X$ to the root (see Figure~\ref{fig:2057}).

\begin{figure}[h]
\centering
{\hfil \begin{tikzpicture}[scale=1.0]
    \coordinate (A) at (0,0);
    \coordinate (B) at (-2,1);
    \coordinate (C) at (2,1);
    \coordinate (D) at (-3,2);
    \coordinate (E) at (-1,2);
	\coordinate (F) at (1,2);
    \coordinate (G) at (3,2);
    \coordinate [label=above:$1$] (H1) at (-3.5,3);
    \coordinate [label=above:$2$] (H2) at (-2.5,3);
    \coordinate [label=above:$3$] (H3) at (-1.5,3);
    \coordinate [label=above:$4$] (H4) at (-0.5,3);
    \coordinate [label=above:$5$] (H5) at (0.5,3);
    \coordinate [label=above:$6$] (H6) at (1.5,3);
    \coordinate [label=above:$7$] (H7) at (2.5,3);
    \coordinate [label=above:$8$] (H8) at (3.5,3);
    
    \coordinate [label=left:$1$] (L1) at (-5,0);
    \coordinate [label=left:$2$] (L2) at (-5,1);
    \coordinate [label=left:$3$] (L3) at (-5,2);
    \coordinate (R1) at (5,0);
    \coordinate (R2) at (5,1);
    \coordinate (R3) at (5,2);

    \draw[line width=2.5] (C)--(A)--(B);
    \draw[line width=2.5] (D)--(B)--(E);
    \draw[line width=0.5] (F)--(C);
    \draw[line width=2.5] (G)--(C);
    \draw[line width=0.5] (H1)--(D);
    \draw[line width=2.5] (H2)--(D);
    \draw[line width=2.5] (H3)--(E);
    \draw[line width=0.5] (H4)--(E);
    \draw[line width=0.5] (H5)--(F)--(H6);
    \draw[line width=2.5] (H7)--(G);
    \draw[line width=0.5] (H8)--(G);
    \draw[line width=0.5] (L1)--(L2)--(L3);
    \draw[dashed] (L1)--(R1);
    \draw[dashed] (L2)--(R2);
    \draw[dashed] (L3)--(R3);
    \draw (L1) circle [radius=0.05];
 	\draw (L2) circle [radius=0.05];
 	\draw (L3) circle [radius=0.05];
 	\draw[fill] (H2) circle [radius=0.1];
 	\draw[fill] (H3) circle [radius=0.1];
 	\draw[fill] (H7) circle [radius=0.1];
 	\draw[fill] (B) circle [radius=0.1];
 	\draw[fill] (A) circle [radius=0.1];
    
    \end{tikzpicture}\hfil}
\caption{The auxiliary tree $T_X$ for $X=\{2,3,7\}$.}
\label{fig:2057}
\end{figure}
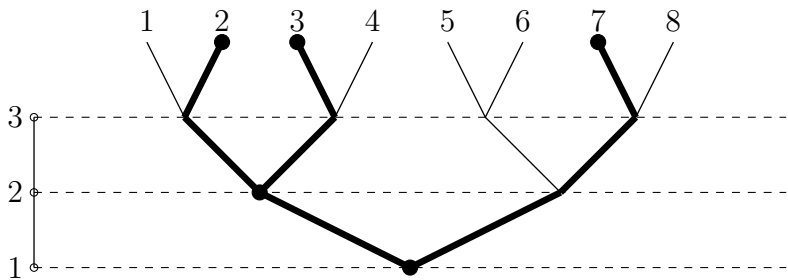

An essential observation on which both the double-exponential lower bound construction 
for the four-colour Ramsey number of triples and our lower bound on $\LL^{(2)}(n)$ rely 
is that there are two essentially different kinds of triples: those engendering a ``left tree'' 
and those engendering a ``right tree''. Roughly speaking the difference is that when descending
from the leaves to the root, the two elements concurring first are the two smaller 
ones for left trees and the two larger ones for right trees. For instance, Figure~\ref{fig:2057} 
displays a left tree.

To make these notions more precise we introduce the following notation. Given at least two distinct 
vectors $x_1, \dots, x_t\in \{-1, +1\}^m$ with coordinates $x_i=(x_{i1}, \dots, x_{im})$ we write 
\[
	\delta(x_1, \dots, x_t)=\min\{\mu\in [m]\colon x_{1\mu}= \dots = x_{t\mu} \text{ is not the case}\}\,.
\]
for the first level where two of them differ. 

Now let $xyz\in [N]^{(3)}$ be an arbitrary triple with $x<y<z$ and set $\delta=\delta(x, y, z)$. 
This means that $x$, $y$, and $z$ agree in the first $\delta-1$ coordinates, while their entries 
in the $\delta^{\mathrm{th}}$ coordinate, which we denote by $x_\delta$, $y_\delta$, $z_\delta$,
fail to coincide. As a consequence of $x<y<z$ we know $x_\delta\le y_\delta\le z_\delta$. Thus 
$x_\delta=-1$, $z_\delta=+1$, and there remain two possibilities: Depending on 
whether $y_\delta=-1$ or $y_\delta=+1$ we say that $xyz$ forms a {\it left tree} or 
a {\it right tree}, respectively. Equivalently, $xyz$ yields a left tree 
whenever $\delta(x, y)>\delta(y, z)$ and a right tree whenever $\delta(x, y)<\delta(y, z)$.  

When the triples are coloured with $\{\lefty, \righty\}$ depending on whether they form left
trees or right trees, the monochromatic sets are called {\it combs}. 
More explicitly, for 
\[	
	1\le x_1<\dots < x_t\le N
\]
the set $X=\{x_1, \dots, x_t\}$ is a {\it left comb} if 
$\delta(x_1, x_2)>\delta(x_2, x_3)>\dots >\delta(x_{t-1}, x_t)$
and a {\it right comb} if 
$\delta(x_1, x_2)<\delta(x_2, x_3)<\dots <\delta(x_{t-1}, x_t)$ (see Figure~\ref{fig:0001}). 
For instance, the empty set, every singleton, and every pair are both a left comb and a 
right comb. Since every 
triple is either a left tree or a right tree, triples are combs in exactly one of the two 
directions. Some quadruples fail to be combs.   

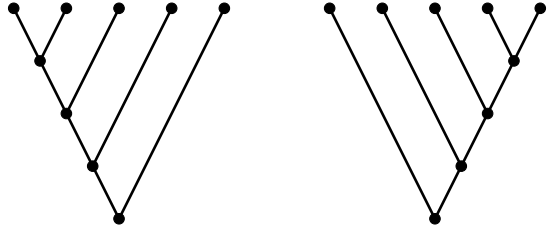
\begin{figure}[h]
\centering
{\hfil \begin{tikzpicture}[scale=0.7]
    \coordinate (R1) at (-3,0);
    \coordinate (R2) at (-3.5,1);
    \coordinate (R3) at (-4,2);
    \coordinate (R4) at (-4.5,3);
    \coordinate (R5) at (-5,4);
	\coordinate (R6) at (-4,4);
    \coordinate (R7) at (-3,4);
    \coordinate (R8) at (-2,4);
    \coordinate (R9) at (-1,4);
    \coordinate (H1) at (3,0);
    \coordinate (H2) at (3.5,1);
    \coordinate (H3) at (4,2);
    \coordinate (H4) at (4.5,3);
    \coordinate (H5) at (5,4);
    \coordinate (H6) at (4,4);
    \coordinate (H7) at (3,4);
    \coordinate (H8) at (2,4);
    \coordinate (H9) at (1,4);

    \draw[line width=1] (R1)--(R2)--(R3)--(R4)--(R5);
    \draw[line width=1] (R1)--(R9);
    \draw[line width=1] (R2)--(R8);
    \draw[line width=1] (R3)--(R7);
    \draw[line width=1] (R4)--(R6);
    
    \draw[line width=1] (H1)--(H2)--(H3)--(H4)--(H5);
    \draw[line width=1] (H1)--(H9);
    \draw[line width=1] (H2)--(H8);
    \draw[line width=1] (H3)--(H7);
    \draw[line width=1] (H4)--(H6);
    
    \draw[fill] (R1) circle [radius=0.1];
 	\draw[fill] (R2) circle [radius=0.1];
 	\draw[fill] (R3) circle [radius=0.1];
    \draw[fill] (R4) circle [radius=0.1];
 	\draw[fill] (R5) circle [radius=0.1];
 	\draw[fill] (R6) circle [radius=0.1];
    \draw[fill] (R7) circle [radius=0.1];
 	\draw[fill] (R8) circle [radius=0.1];
 	\draw[fill] (R9) circle [radius=0.1];
    \draw[fill] (H1) circle [radius=0.1];
 	\draw[fill] (H2) circle [radius=0.1];
 	\draw[fill] (H3) circle [radius=0.1];
    \draw[fill] (H4) circle [radius=0.1];
 	\draw[fill] (H5) circle [radius=0.1];
 	\draw[fill] (H6) circle [radius=0.1];
 	\draw[fill] (H7) circle [radius=0.1];
 	\draw[fill] (H8) circle [radius=0.1];
 	\draw[fill] (H9) circle [radius=0.1];

    \end{tikzpicture}\hfil}
\caption{A left comb and a right comb.}
\label{fig:0001}
\end{figure}

\subsection{Ordering pairs}
Let us recall that for Ramsey numbers the negative stepping-up lemma shows, e.g., that if 
$m\nlra (n-1)^2_2$, then $2^m\nlra (n)^3_4$. As we shall reuse the argument, we provide a brief
sketch. Let $f\colon [2^m]^{(3)}\lra \{\lefty, \righty\}$ be the colouring telling
us whether a triple generates a left tree or a right tree and 
let $g\colon [m]^{(2)}\lra \{+1, -1\}$ be a colouring exemplifying the negative partition 
relation $m\nlra (n-1)^2_2$. Define a 
colouring 
\[
	h\colon [2^m]^{(3)}\lra \{\lefty, \righty\}\times \{+1, -1\}
\]
of the triples with four colours by 
\[
	h(xyz)=\bigl(f(xyz), g\bigl(\delta(xy), \delta(yz)\bigr)\bigr)
\]
whenever $1\le x<y<z\le 2^m$ (where, for transparency, we abbreviated $\delta(x, y)$ 
to~$\delta(xy)$).
If $Z\subseteq [2^m]$ is monochromatic with respect to $h$, then, in particular, it needs to be 
monochromatic for $f$, i.e., $Z$ is a comb. Thus, if $Z=\{z_1, \dots, z_{|Z|}\}$ 
with $z_1<z_2<\dots<z_{|Z|}$, then 
$\{\delta(z_i, z_{i+1})\colon i<|Z|\}$ is a set of size $|Z|-1$ whose pairs are monochromatic 
with respect to~$g$, for which reason 
$|Z|-1<n-1$. In other words, $h$ establishes the desired negative partition 
relation $2^m\nlra (n)^3_4$.

In the light of this reasoning it suffices to construct, in the same setup, an ordering 
$([2^m]^{(2)}, \strictif)$ such that for every triple $e\in [2^m]^{(3)}$ the colour $h(e)$ 
from the foregoing paragraph determines the ``isomorphism type'' of $(e^{(2)}, \strictif)$ and 
vice versa. Let $xyz$ with $1\le x<y<z\le 2^m$ be an arbitrary triple and set $\xi=\delta(xy)$
as well as  $\eta=\delta(yz)$. Our goal when constructing $\strictif$ is that 
 
\smallskip
\begin{center}
\begin{tabular}{c|c|c}
if $xyz$ is a & and & then  \\ \hline 
\rule{0pt}{3ex}  left tree  & $g(\eta, \xi)=+1$ & $xy\strictif xz\strictif yz$,\\
\rule{0pt}{3ex}  left tree  & $g(\eta, \xi)=-1$ & $xy\strictif yz\strictif xz$, \\
\rule{0pt}{3ex}  right tree & $g(\xi, \eta)=+1$ & $yz\strictif xy\strictif xz$, \\
\rule{0pt}{3ex}  right tree & $g(\xi, \eta)=-1$ & $yz\strictif xz\strictif xy$.
\end{tabular}
\end{center}
\medskip

If this can be accomplished, then every set $Z\subseteq [2^m]$, for which $(Z^{(2)}, \strictif)$
is ordered canonically, is monochromatic with respect to the colouring $h$ and, therefore, the 
proof of $\LL^{(2)}(n)>2^m$ will be complete. 

Let us introduce one final piece of notation.
Given a vector $x=(x_1, \dots, x_m)\in \{-1, +1\}^m$ and~$\delta\in [m]$ we 
define the vector $x\ltimes\delta\in\{-1, +1\}^{m-\delta}$ by
\[
	x\ltimes\delta=\bigl(x_{\delta+1}g(\delta, \delta+1), \dots, x_mg(\delta, m)\bigr)\,.
\]

It is easy to see that there exists an ordering $\strictif$ on $[2^m]^{(2)}$ such 
that for any two distinct 
pairs $xy$ and $x'y'$ with $x<y$ and $x'<y'$ the following statements hold.  
\begin{enumerate}[label=\nlabel]
	\item\label{it:1} If $\delta(xy)\ne \delta(x'y')$, then
		\[
			xy\strictif x'y'
			\,\,\, \Longleftrightarrow \,\,\,
			\delta(x'y') < \delta(xy)\,.
		\]
	\item\label{it:2} If $\delta=\delta(xy)= \delta(x'y')$ and at least one 
			of $x\ltimes \delta \ne x'\ltimes \delta$ or $y\ltimes \delta \ne y'\ltimes \delta$
			holds, then  
		\[
			xy\strictif x'y'
			\,\,\, \Longleftrightarrow \,\,\,
			(x\ltimes \delta)\circ (y\ltimes \delta)
			\llex
			(x'\ltimes \delta)\circ (y'\ltimes \delta)\,,
		\]
		where $\circ$ indicates concatenation, i.e., that we are comparing $(2m-2\delta)$-tuples
		lexicographically.
\end{enumerate}

In fact, these two properties alone define a partial order on $[2^m]^{(2)}$, which 
can be extended arbitrarily to a linear order~$\strictif$. 
Note that the condition~$x\ltimes \delta \ne x'\ltimes \delta$ occurring in~\ref{it:2}
means that $x$ and $x'$ disagree after the $\delta^{\text{th}}$ coordinate.

It remains to show that these two properties of~$\strictif$ imply the four statements in the 
above table.
Let us return to this end to our triple~$xyz$ with $1\le x<y<z\le 2^m$ 
and $\xi=\delta(xy)$, $\eta=\delta(yz)$.
If $xyz$ is a left tree, then $\delta(xz)=\delta(yz)<\delta(xy)$, for which reason~\ref{it:1} 
implies $xy\strictif xz, yz$ and it remains to compare the latter two pairs. 
Since $x\ltimes \eta\ne y\ltimes \eta$, the second rule declares that $xz\strictif yz$ holds if 
and only if $x\ltimes \eta\llex y\ltimes \eta$.
Now $x$ and $y$ agree in the first $\xi-1$ coordinates and continue with $x_\xi=-1$, $y_\xi=+1$.
Thus $x\ltimes \eta$ and $y\ltimes \eta$ agree in their first $\xi-\eta-1$ coordinates and 
proceed with $x_\xi g(\eta, \xi)$, $y_\xi g(\eta, \xi)$, respectively. So altogether $xz\strictif yz$
is equivalent to $g(\eta, \xi)=+1$ and we have confirmed the two upper rows of the table. 
The case where $xyz$ is a right tree is discussed similarly. Summarising, we have thereby proved
the first assertion in Theorem~\ref{thm:lower}.

\subsection{Negative stepping-up lemma for Leeb numbers}

Throughout this subsection we fix two integers $k\ge 3$ and $n\ge k$. We set $m=\LL^{(k-1)}(n)-1$
and study orderings on $[2^m]^{(k)}$, where~$2^m$ is again identified with $\{+1, -1\}^m$. 
We are to exhibit an ordering $([2^m]^{(k)}, \strictif)$ without canonically ordered subsets 
of size $4n+k$. Preparing ourselves we consider the following concept. 

\begin{definition}\label{dfn:imp}
	A set $Z\subseteq [2^m]$ is said to be {\it impeded} (see Figure~\ref{fig:impede}) if there exist 
	two elements $x<y$ in~$Z$
	with the following properties.
	\begin{enumerate}[label=\alabel]
		\item\label{it:impa} At least $k$ members of $Z$ lie between $x$ and $y$.
		\item\label{it:impb} Setting $\delta=\delta(x, y)$ there exist $x', x'', y', y''\in Z$
			such that $x<x'<x''<y''<y'<y$ and $x''_\delta=-1$ while $y''_\delta=+1$.
	\end{enumerate}
\end{definition}

\begin{figure}[h]
\centering
{\hfil \begin{tikzpicture}[scale=0.9]
    
    \draw (0,0) ellipse (2cm and 0.7cm);
    \draw (6,0) ellipse (2cm and 0.7cm);
    \coordinate [label=above:$x$] (x) at (-1.1,0);
    \coordinate [label=above:$x'$] (x') at (0,0);
    \coordinate [label=above:$x''$] (x'') at (1.1,0);
    \coordinate [label=above:$y$] (y) at (7.1,0);
    \coordinate [label=above:$y'$] (y') at (6,0);
    \coordinate [label=above:$y''$] (y'') at (4.9,0);
    \coordinate [label=left:$\delta+1$] (l1) at (-4,-3);
    \coordinate [label=left:$\delta$] (l2) at (-4,-4);
    
    \filldraw (0,-0.1) circle (1.5pt);
    \filldraw (-1.1,-0.1) circle (1.5pt);
    \filldraw (1.1,-0.1) circle (1.5pt);
    \filldraw (7.1,-0.1) circle (1.5pt);
    \filldraw (6,-0.1) circle (1.5pt);
    \filldraw (4.9,-0.1) circle (1.5pt);
    \draw (-4,-3) circle (1.5pt);
    \draw (-4,-4) circle (1.5pt);

    \coordinate (r) at (0,-3);
    \coordinate (s) at (6,-3);
    \coordinate (t) at (3,-4);
    \coordinate (a1) at (-3,-1);
    \coordinate (a2) at (-3,1);
    \coordinate [label=right:$Z$] (a3) at (9,1);
    \coordinate (a4) at (9,-1);
    \coordinate (e1) at (-1.95,-0.15);
    \coordinate (e2) at (1.95,-0.15);
    \coordinate (f1) at (4.05,-0.15);
    \coordinate (f2) at (7.95,-0.15);
    \coordinate (r1) at (10,-3);
    \coordinate (r2) at (10,-4);
    \coordinate (l3) at (-4,-1.8);
    \coordinate (l4) at (-4,-5);

    \draw (e1)--(r)--(e2);
    \draw (f1)--(s)--(f2);
    \draw (r)--(t)--(s);
    \draw (a1)--(a2)--(a3)--(a4)--(a1);
    \draw[dashed] (l1)--(r1);
    \draw[dashed] (l2)--(r2);
    \draw (l3)--(l4);  
    \end{tikzpicture}\hfil}
    \caption{An impeded set $Z$.}
    \label{fig:impede}
\end{figure}
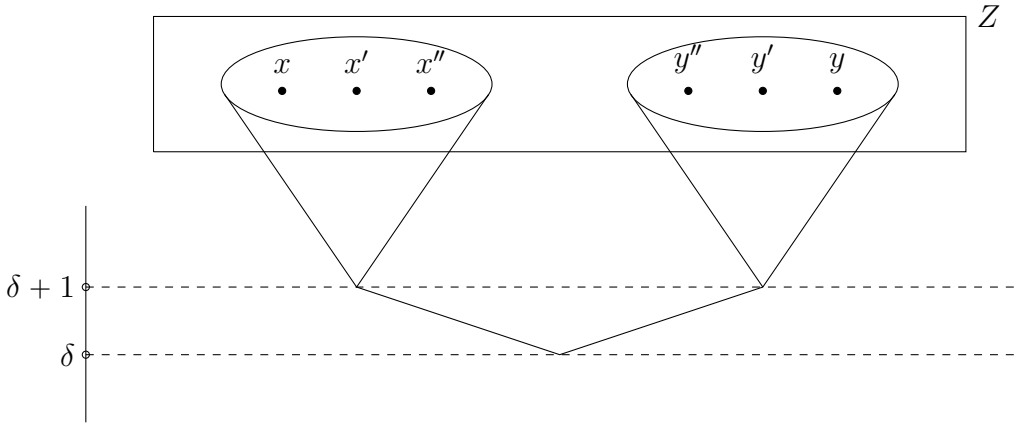

Roughly speaking, every unimpeded set contains a large comb.
  
\begin{lemma}\label{lem:2050}
	Every set $Z\subseteq [2^m]$ is either impeded or possesses two subsets $L, R\subseteq Z$
	with $|L|+|R|\ge \tfrac12(|Z|-k+1)$ forming a left comb and a right comb, respectively.  
\end{lemma}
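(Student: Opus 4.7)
The plan is to induct on $t = |Z|$, using the fact that being unimpeded is hereditary: any impeded configuration $x, y, x', x'', y'', y'$ inside a subset of $Z$ would immediately witness impededness of $Z$ itself. The base cases $t \leq k + 1$ are immediate because the target $(|Z| - k + 1)/2$ is then at most $1$, satisfied by the empty pair of combs (if $t < k$) or by any singleton, which is both a left and a right comb.

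For the inductive step, enumerate $Z = \{z_1 < \dots < z_t\}$ with $t \geq k + 2$ and set $\delta = \delta(z_1, z_t)$. The $t - 2 \geq k$ elements of $Z$ strictly between $z_1$ and $z_t$ split into a prefix whose $\delta$-th coordinate is $-1$ and a suffix whose $\delta$-th coordinate is $+1$. Applying the unimpededness assumption to the pair $(z_1, z_t)$ forbids each of those two blocks from containing two or more elements simultaneously, since the two outermost elements on either side would then furnish the required $x', x'', y'', y'$. By the symmetry that reverses the ordering on $[2^m]$ (and thus swaps left combs with right combs), I may assume that at most one element of $Z \cap (z_1, z_t)$ has $\delta$-th coordinate $+1$.

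If no middle element has coordinate $+1$ at position $\delta$, set $Z' = Z \setminus \{z_t\}$ and invoke induction on $Z'$ to obtain combs $L', R' \subseteq Z'$ with $|L'| + |R'| \geq (t-k)/2$; since every pair inside $Z'$ agrees with $z_1$ at position $\delta$, every such pair has $\delta$-value strictly greater than $\delta$, whereas $\delta(z, z_t) = \delta$ for every $z \in Z'$. Therefore $L' \cup \{z_t\}$ is still a left comb, and the pair $(L' \cup \{z_t\}, R')$ beats the target. If on the other hand there is a unique middle element with $+1$ at position $\delta$, it must be $z_{t-1}$; I would then set $Z'' = Z \setminus \{z_{t-1}, z_t\}$, apply induction to get $|L''| + |R''| \geq (t-k-1)/2$, and append $z_{t-1}$ (but not $z_t$, since $\delta(z_{t-1}, z_t) > \delta$ would break the monotonicity of the $\delta$-values) to the left comb $L''$, matching the target exactly.

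The most delicate step I foresee is verifying that the factor $1/2$ in the target bound precisely absorbs the possibility of stripping off two elements at once in the last subcase, so that the induction closes with no slack; everything else amounts to routine bookkeeping with $\delta$-values in the binary tree on $[2^m]$.
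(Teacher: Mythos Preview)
Your argument is correct and follows essentially the same route as the paper's proof. The paper phrases the induction as a minimal counterexample and, after observing that one of $x''_\delta=+1$ or $y''_\delta=-1$ must hold, always strips two elements from the bottom and reattaches one to the right comb; you work from the top, distinguish the subcases of removing one versus two elements, and reattach to the left comb --- but this is just the mirror image of the same idea, and your case~1 is merely the degenerate instance the paper absorbs into its uniform two-element removal.
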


\begin{proof}
	Otherwise let $Z$ be a minimal counterexample. Since every set with at most two elements 
	is a two-sided comb, we have $|Z|\ge k+2$. Now the vectors $x=\min(Z)$
	and $y=\max(Z)$ satisfy part~\ref{it:impa} of Definition~\ref{dfn:imp}. Let $x'$, $x''$ be the 
	second and third-smallest vector in $Z$ and, similarly, let $y'$ and $y''$ be the second and 
	third largest one. If both $x''_\delta=-1$ and $y''_\delta=+1$ would hold, 
	where $\delta=\delta(x, y)$, then the set $Z$ were impeded, contrary to its choice. 
	
	Thus at least one of the cases $x''_\delta=+1$ and $y''_\delta=-1$ occurs and for reasons 
	of symmetry we may suppose that $x''_\delta=+1$. The set $Z_\star=Z\setminus \{x, x'\}$ 
	cannot be impeded for then~$Z$ was impeded as well. So by the minimality of $Z$ there 
	exist sets $L, R_\star\subseteq Z_\star$ forming a left comb and a right comb, respectively,
	and satisfying $|L|+|R_\star|\ge \tfrac12(|Z_\star|-k+1)$. For any two elements $r<r'$
	from $R_\star$ we have $\delta(x, r)=\delta<\delta(r, r')$ and, consequently, $R=\{x\}\cup R_\star$
	is a right comb. But now $|L|+|R|=|L|+|R_\star|+1\ge \tfrac12(|Z|-k+1)$ yields a 
	final contradiction.
\end{proof}

We proceed with some further notation. Let $\llex$ denote the lexicographic ordering 
on~$[2^m]^{(k)}$. So explicitly for $X, Y\in [2^m]^{(k)}$ we have $X\llex Y$ if and only 
if $\sum_{x\in X}2^{-x}$ is larger than $\sum_{y\in Y}2^{-y}$. 

For every set $X\in [2^m]^{(k)}$ we determine a natural
number $\phi(X)$ according to the following rule: Consider $x^-=\min(X)$, 
$x^+=\max(X)$ and compute $\delta=\delta(x^-, x^+)$. Notice that all elements of $X$ agree
in their first $\delta-1$ coordinates, while in the $\delta^{\mathrm{th}}$ 
coordinate~$x^-_\delta=-1$ and~$x^+_\delta=+1$. 
Let $\phi(X)$ count the number of members of $X$ having 
a $-1$ as their~$\delta^{\mathrm{th}}$ coordinate. Notice that if $X$ is a left comb, then 
$\phi(X)=|X|-1$, while right combs satisfy $\phi(X)=1$.

Next, if $B$ is either a left or a right comb, we know 
that $\pi(B)=\bigl\{\delta(xy)\colon xy\in B^{(2)}\bigr\}\subseteq [m]$ is a set of size $|B|-1$ 
whose elements can be found by applying $\delta$ to the $|B|-1$ pairs of consecutive members
of $B$. We call $\pi(B)$ the {\it projection} of $B$.   

Finally, instead of a colouring we shall apply an appropriate ordering $([m]^{(k-1)}, \YY)$ 
for comparing projections.
Because of $m<\LL^{(k-1)}(n)$ this ordering can be chosen without canonical subsets of size $n$. 

We now state four desirable properties we would like an ordering $([2^m]^{(k)}, \strictif)$
to satisfy for any two distinct sets $X, Y\in [2^m]^{(k)}$.

\begin{enumerate}[label=\nlabel]
	\item\label{it:11} If $\phi(X)$ is odd and $\phi(Y)$ is even, then $X\strictif Y$. 
	\item\label{it:22} If $X$ and $Y$ are two combs in the same direction having different 
		projections, then $X\strictif Y \Longleftrightarrow \pi(X)\YY\pi(Y)$.
	\item\label{it:33} If $X$ and $Y$ are left combs with $\pi(X)=\pi(Y)$, 
		then $X\strictif Y \Longleftrightarrow Y\llex X$.  
	\item\label{it:44} If $X$ and $Y$ are right combs with $\pi(X)=\pi(Y)$, 
		then $X\strictif Y \Longleftrightarrow X\llex Y$.  
\end{enumerate}       

It is not hard to see that these properties do not contradict each other. Indeed, in order to 
satisfy~\ref{it:11} one partitions $[2^m]^{(k)}$ into two sets depending on the parity of $\phi$
and puts the odd class before the even class. The remaining rules~\ref{it:22}\,--\,\ref{it:44}
only concern the internal orderings within these two classes. Similarly,~\ref{it:22} tells
us to sub-partition the combs within both classes according to their projections 
and~\ref{it:33},~\ref{it:44} only give further instructions how to compare combs within the 
same sub-class.  

Now it suffices to show for no ordering $([2^m]^{(k)}, \strictif)$ 
satisfying~\ref{it:11}\,--\,\ref{it:44}
there is a set $Z\subseteq [2^m]$ of size~$4n+k$ such that $(Z^{(k)}, \strictif)$ 
is ordered canonically. Assume contrariwise that~$\strictif$ and~$Z$ have these properties. 
If $L\subseteq Z$ is a left comb, then so is every $X\in L^{(k)}$ and by~\ref{it:22} it follows 
that $\pi(L)^{(k-1)}$ is ordered canonically by $\YY$. So our choice of $\YY$ 
yields $|\pi(L)|\le n-1$ and, consequently, $|L|\le n$. For the same reason, every right 
comb $R\subseteq Z$ has at most the size~$n$. But now Lemma~\ref{lem:2050} tells us that $Z$
is impeded. Pick $x, x', x'', y'', y', y\in Z$ as in Definition~\ref{dfn:imp}. 

\smallskip

{\it \hskip2em Special case: $k=3$}

\smallskip

Recall that every triple is either a left tree or a right tree. Owing to~\ref{it:11} the 
right trees precede the left trees with respect to $\strictif$.

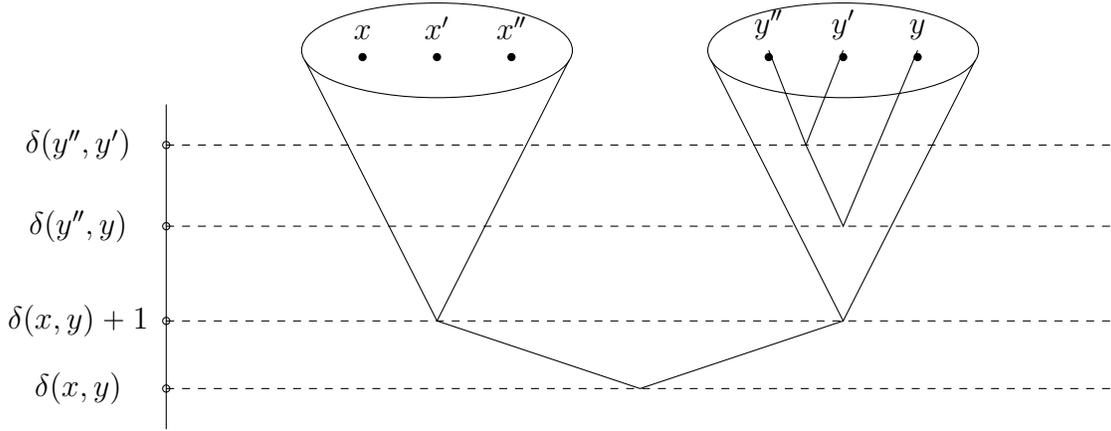
\begin{figure}[h]
\centering
{\hfil \begin{tikzpicture}[scale=0.9]
    
    \draw (0,0) ellipse (2cm and 0.7cm);
    \draw (6,0) ellipse (2cm and 0.7cm);
    \coordinate [label=above:$x$] (x) at (-1.1,0);
    \coordinate [label=above:$x'$] (x') at (0,0);
    \coordinate [label=above:$x''$] (x'') at (1.1,0);
    \coordinate [label=above:$y$] (y) at (7.1,0);
    \coordinate [label=above:$y'$] (y') at (6,0);
    \coordinate [label=above:$y''$] (y'') at (4.9,0);
    \coordinate (l1) at (-4,-4);
    \coordinate (l2) at (-4,-5);
    \coordinate (l3) at (-4,-1.4);
    \coordinate (l4) at (-4,-2.6);
    \node[] at (-5.3,-5) {$\delta(x,y)$};
    \node[] at (-5.3,-4) {$\delta(x,y)+1$};
    \node[] at (-5.3,-2.6) {$\delta(y'',y)$};
    \node[] at (-5.3,-1.4) {$\delta(y'',y')$};
    
    \filldraw (0,-0.1) circle (1.5pt);
    \filldraw (-1.1,-0.1) circle (1.5pt);
    \filldraw (1.1,-0.1) circle (1.5pt);
    \filldraw (7.1,-0.1) circle (1.5pt);
    \filldraw (6,-0.1) circle (1.5pt);
    \filldraw (4.9,-0.1) circle (1.5pt);
    \draw (-4,-4) circle (1.5pt);
    \draw (-4,-5) circle (1.5pt);
    \draw (-4,-1.4) circle (1.5pt);
    \draw (-4,-2.6) circle (1.5pt);

    \coordinate (r) at (0,-4);
    \coordinate (s) at (6,-4);
    \coordinate (t) at (3,-5);
    \coordinate (w) at (5.45,-1.4);
    \coordinate (v) at (6,-2.6);
    \coordinate (e1) at (-1.95,-0.15);
    \coordinate (e2) at (1.95,-0.15);
    \coordinate (f1) at (4.05,-0.15);
    \coordinate (f2) at (7.95,-0.15);
    \coordinate (r1) at (10,-4);
    \coordinate (r2) at (10,-5);
    \coordinate (r3) at (10,-1.4);
    \coordinate (r4) at (10,-2.6);
    \coordinate (l5) at (-4,-0.8);
    \coordinate (l6) at (-4,-5.6);

    \draw (e1)--(r)--(e2);
    \draw (f1)--(s)--(f2);
    \draw (r)--(t)--(s);
    \draw[dashed] (l1)--(r1);
    \draw[dashed] (l2)--(r2);
    \draw[dashed] (l3)--(r3);
    \draw[dashed] (l4)--(r4);
    \draw (l5)--(l6);
    \draw (y'')--(w)--(y');
    \draw (w)--(v)--(y);

    \end{tikzpicture}\hfil}
    \caption{The case $k=3$.}
\end{figure}

Now consider the quintuples $A=\{x, x', x'', y', y\}$ and $B=\{x, x', y'', y', y\}$,
whose elements have just been listed in increasing order. By canonicity we know 
that $(A^{(3)}, \strictif)$ and $(B^{(3)}, \strictif)$ are isomorphic via the map that 
sends $x''$ to $y''$ and keeps the four other elements fixed. Since $xx''y'$ and $xx''y$
are left trees with the same projection,~\ref{it:33} yields $xx''y\strictif xx''y'$.
This statement in $(A^{(3)}, \strictif)$ translates to $xy''y\strictif xy''y'$. 
As we are now comparing right trees,~\ref{it:44} discloses that $xy''y$ and $xy''y'$
cannot have the same projection, for which reason $\delta(y''y)\ne \delta(y''y')$.
In other words, $y''y'y$ is a left tree and $\delta(y''y) = \delta(y'y)$. 
It follows that the right trees $xy''y$ and $xy'y$ have the same projection, 
so another application of~\ref{it:44} leads to $xy''y\strictif xy'y$. When transferred
back to $A$ this statement becomes $xx''y\strictif xy'y$, so we have found a left tree 
coming before a right tree, contrary to~\ref{it:11}. This concludes the discussion of the 
case $k=3$.

\smallskip

{\it \hskip2em General case: $k\ge 4$}

\smallskip

We may assume that $x'$, $x''$ are the elements of $Z$ immediately succeeding $x$ and,
similarly, that $y''$, $y'$ are immediately preceeding $y$. According to part~\ref{it:impa}
of Definition~\ref{dfn:imp} there is a set $W\subseteq Z$ with $|W|=k-2$, $\min(W)=x''$,
and $\max(W)=y''$. Consider the sets $A=W\cup\{x, x', y\}$ and $B=W\cup\{x, y', y\}$
of size $k+1$. By canonicity we know that the orderings $(A^{(k)}, \strictif)$ 
and $(B^{(k)}, \strictif)$ are in the following sense essentially the same. For~$i\in [k+1]$
let $A_i$ and $B_i$ denote the sets of size $k$ arising from $A$ and $B$ by removing 
their~$i^{\mathrm{th}}$ elements, respectively. Now if $A_i\strictif A_j$ holds for 
some $i, j\in [k+1]$,
then $B_i\strictif B_j$ follows. Set $\phi=\phi(A)$ and observe that $3\le \phi\le k-1$
due to the presence of $x$, $x'$, $x''$, $y$, and $y''$. Moreover 
\begin{align*}
	\phi(A_1)=\dots =\phi(A_\phi)=\phi-1, \qquad \phi(A_{\phi+1})=\dots = \phi(A_{k+1})=\phi, \\
	\phi(B_1)=\dots =\phi(B_{\phi-1})=\phi-2, \quad \phi(B_{\phi})=\dots = \phi(B_{k+1})=\phi-1\,.
\end{align*}
In view of~\ref{it:11}, this leads to the equivalences
\[
	A_1\strictif A_{k+1}
	\,\,\,\Longleftrightarrow\,\,\,
	\phi\text{ is even }
	\,\,\,\Longleftrightarrow\,\,\,
	B_{k+1}\strictif B_{1}\,,
\]
which contradict the canonicity of $\strictif$. 
Altogether, this completes the proof of Theorem~\ref{thm:lower}. 

\section{Upper bounds: the canonisation procedure}\label{sec:upper}

\subsection{Preparation}\label{subsec:31}
The proof of Theorem~\ref{thm:upper} starts with some synchronisation principles for 
colourings $f\colon X^{(r)}\lra\Gamma$, where $X\subseteq \NN$, $r\ge 2$, and $\Gamma$ denotes a 
finite {\it set of colours}. A special r\^{o}le will be played by the set of 
colours 
\begin{equation}\label{eq:Phi}
	\Phi=\{-1, +1\}\,.
\end{equation} 

Given a colouring $f\colon X^{(r)}\lra\Phi$ we define the {\it opposite colouring}
$(-f)\colon X^{(r)}\lra\Phi$ by $(-f)(x)=-(f(x))$ for all $x\in X^{(r)}$.  
For simplicity we call a colouring $f\colon X^{(r)}\lra\Phi$,
where $r\ge 1$ and $X\subseteq \NN$, {\it increasing} if 
$f(x_1, \dots, x_{r-1}, y)\le f(x_1, \dots, x_{r-1}, y')$ holds for all $x_1<\dots <x_{r-1}<y<y'$
from $X$ and {\it decreasing} if its opposite colouring is increasing. 
We refer to $f$ as {\it monotone} if it is either increasing or decreasing. 
Two colourings $f\colon X^{(r)}\lra\Phi$ and $g\colon Y^{(s)}\lra\Phi$
are said to be {\it monotone in the same direction} if either both are increasing or both are
decreasing. Similarly, $f$ and $g$ are {\it monotone in opposite directions} if $f$ and $-g$ 
are monotone in the same direction.  

\begin{lemma}\label{lem:1730}
	Suppose that $s, t\ge 1$ are positive integers and that $X\subseteq \NN$ is a set 
	of size $|X|\ge (s-1)(t-1)+1$. If the colouring $f\colon X^{(2)}\lra \Phi$ is monotone, 
	then either there is a monochromatic set $A\in X^{(s)}$ for the colour $-1$ or a 
	monochromatic set $B\in X^{(t)}$ for the colour $+1$.
\end{lemma}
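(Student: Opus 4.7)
The plan is to mimic the pigeonhole proof of the Erd\H{o}s--Szekeres theorem, with the monotonicity of $f$ taking the role that the linear order on $\RR$ plays there. I first reduce to the case where $f$ is increasing: if $f$ is decreasing, then $-f$ is increasing, and the lemma applied to $-f$ with the parameters $s$ and $t$ swapped is equivalent to what is claimed here, since $(-f)^{-1}(-1)=f^{-1}(+1)$ and vice versa. So from now on I assume $f$ is increasing, and I enumerate $X=\{x_1<\dots<x_N\}$ with $N\ge (s-1)(t-1)+1$.

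For each index $i\in[N]$, let $b(i)$ denote the maximum size of a $+1$-monochromatic subset of $\{x_1,\dots,x_i\}$ whose largest element is $x_i$; in particular $b(i)\ge 1$. The single use of monotonicity is the observation that whenever $i<j$ and $f(x_i,x_j)=+1$, we have $b(j)\ge b(i)+1$: choosing an optimal $+1$-monochromatic set $y_1<\dots<y_m=x_i$ of size $m=b(i)$, the increasing property of $f$ gives $f(y_\ell,x_j)\ge f(y_\ell,x_i)=+1$ for every $\ell<m$, so $\{y_1,\dots,y_m,x_j\}$ is a $+1$-monochromatic set of size $m+1$ ending at $x_j$.

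The remainder is pigeonhole. If $b(i)\ge t$ for some $i$, a witnessing $+1$-monochromatic set of size $t$ provides the set $B$ required by the lemma. Otherwise $b$ maps $[N]$ into $\{1,\dots,t-1\}$, so some level set $b^{-1}(r)$ has at least $\lceil N/(t-1)\rceil\ge s$ elements; for any two indices $i<j$ in this level set, the equality $b(i)=b(j)$ together with the inequality just proved rules out $f(x_i,x_j)=+1$, so the corresponding subset of $X$ is $-1$-monochromatic and furnishes the set $A$ the lemma asks for.

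The main conceptual obstacle is picking the right asymmetric statistic. A symmetric definition based on $-1$-chains would fail, because monotonicity only controls the second argument of $f$: one can extend a $+1$-monochromatic set forward using $f(y_\ell,x_j)\ge f(y_\ell,x_i)$, but the analogous inequality is useless for extending a $-1$-monochromatic set forward. The definition of $b$ above exploits this asymmetry precisely and lets monotonicity settle one colour while pigeonhole delivers the other for free.
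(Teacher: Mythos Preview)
Your proof is correct and takes a genuinely different route from the paper. The paper argues by induction on $t$: one removes $z=\max(X)$, splits the remaining elements according to the colour $f(xz)$, and either obtains a large $-1$-neighbourhood of $z$ (which, by the increasing property, is automatically $-1$-monochromatic together with $z$) or a large set on which the induction hypothesis applies and whose $+1$-monochromatic output can be extended by $z$. You instead run the standard Erd\H{o}s--Szekeres labelling argument, assigning to each element the length of the longest $+1$-chain ending there and using pigeonhole on the labels. Your approach is more direct and avoids induction; the paper's approach has the virtue that its ``peel off the maximum and recurse'' structure is precisely what is iterated in the proof of the subsequent Lemma~\ref{lem:1731}, so it serves as a warm-up for that generalisation. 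Your closing paragraph correctly identifies the asymmetry that makes the $+1$-chain statistic the right choice.
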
 

\begin{proof}
	Due to the symmetry between $f$ and $-f$ we may suppose that $f$ is increasing. 
	We argue by induction on $t$. The base case $t=1$ offers no difficulty, for every 
	set of size one is monochromatic in both colours. Now suppose that our claim is already known
	to be true for $t-1$ in place of $t$, where $t\ge 2$. Set $z=\max(X)$
	and consider the set $N=\{x\in X\colon f(xz)=-1\}$. 
		
	\smallskip
	
	{\it \hskip 2em First Case: $|N|\ge s-1$}
	
	\smallskip
	
	Now it suffices to prove that $A=N\cup\{z\}$ is monochromatic in colour $-1$. 
	So let $x<y$ from $A$ be arbitrary and observe that $x\in N$. If $y=z$, then $f(xy)=-1$
	follows from the definition of $N$. Otherwise we have $y<z$ and the assumption that $f$ 
	is increasing implies $f(xy)\le f(xz)=-1$, whence $f(xy)=-1$. 
	
	\smallskip
	
	{\it \hskip 2em Second Case: $|N|< s-1$}
	
	\smallskip
	
	Now the set $X_\star=X\setminus (N\cup \{z\})$
	satisfies $|X_\star|\ge |X|-(s-1)\ge (s-1)(t-2)+1$. Thus the induction hypothesis applies 
	to $X_\star$ and yields either a monochromatic set $A$ of size $s$ for $-1$ or a 
	monochromatic set $B'$ of size $t-1$ for $+1$. In the former case we are done immediately
	and in the latter case the definition of $N$ entails that $B=B'\cup\{z\}$ is monochromatic 
	for the colour $+1$.  
\end{proof}

As observed by the referee, one can also prove this lemma by applying Dilworth's 
theorem~\cite{Dil} to the partial ordering $(X, \le^\star)$ defined by 
declaring $a\le^\star b$ if and only if $a\le b$ and $f(ab)=+1$. (Notice that the 
monotonicity of $f$ ensures that $\le^\star$ is transitive.) In any case, the lemma
iterates as follows.

\begin{lemma}\label{lem:1731}
	Given a natural number $n$ and a nonempty finite index set $I$ let $X\subseteq \NN$
	be a set with $|X|\ge |I|!n^{|I|+1}$. If $(f_i)_{i\in I}$ denotes a family of 
	monotone colourings $f_i\colon X^{(2)}\lra \Phi$, then there is a set $Z\subseteq X$
	of size $n$ which is monochromatic with respect to every $f_i$.
\end{lemma}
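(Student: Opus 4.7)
The plan is to prove Lemma~\ref{lem:1731} by induction on $k = |I|$.

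\textbf{Base case.} For $|I| = 1$, the hypothesis $|X| \ge n^2 \ge (n-1)^2 + 1$ lets us apply Lemma~\ref{lem:1730} with $s = t = n$, producing a monochromatic $n$-set for the single coloring.

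\textbf{Inductive step.} Assume the lemma for $|I'|<k$. Fix $i_0 \in I$ and apply Lemma~\ref{lem:1730} to $f_{i_0}$ with the asymmetric parameters $s = n$ and $t = (k-1)!\,n^k$. The required inequality
\[
(s-1)(t-1)+1 \le n\cdot(k-1)!\,n^k = (k-1)!\,n^{k+1} \le k!\,n^{k+1} \le |X|
\]
holds, so Lemma~\ref{lem:1730} produces either a $-1$-monochromatic $n$-set $A\subseteq X$ for $f_{i_0}$, or a $+1$-monochromatic set $B\subseteq X$ of size $(k-1)!\,n^k$ for $f_{i_0}$.

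In the favourable ($+1$) case, the inductive hypothesis applied to the $k-1$ restricted colourings $(f_j|_B)_{j\in I\setminus\{i_0\}}$, which remain monotone and live on a set of the correct size $(k-1)!\,n^k$, yields a common monochromatic $n$-set $Z\subseteq B$ for these $k-1$ colourings. Since $Z\subseteq B$, it is automatically $+1$-monochromatic for $f_{i_0}$ as well, so $Z$ works for all $k$ colourings.

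\textbf{Where the difficulty lies.} The hard part is the $-1$ case: the $n$-set $A$ is monochromatic only for $f_{i_0}$ and need not be monochromatic for the other $f_j$. The natural remedy is to iterate, discarding $A$ and reapplying Lemma~\ref{lem:1730} to $f_{i_0}$ on $X\setminus A$; the hypothesis $|X|\ge k!\,n^{k+1}=k\cdot(k-1)!\,n^{k+1}$ exceeds the threshold for Lemma~\ref{lem:1730} by a factor of $k$, providing enough slack for roughly $(k-1)(k-1)!\,n^k$ such rounds. To force the favourable case to arise eventually (rather than relying on the nondeterministic choice made by Lemma~\ref{lem:1730}), one may recast the dichotomy via Mirsky's theorem on the partial order $x\prec_{i_0}y \iff x<y$ and $f_{i_0}(x,y)=+1$: either there is a long $\prec_{i_0}$-chain, which is the desired $+1$-monochromatic set $B$ to which the inductive hypothesis applies, or the bound on the chain length produces a correspondingly large antichain, i.e.\ a $-1$-monochromatic set large enough to receive the inductive hypothesis for the remaining $k-1$ colourings. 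The main obstacle will be to balance the threshold separating these two cases so that the bound $k!\,n^{k+1}$ is exactly enough in both branches; this is where the factor $k$ of buffer in the hypothesis must be spent carefully.
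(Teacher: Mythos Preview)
Your proposal correctly handles the base case and the favourable branch of the inductive step, but the plan you sketch for the hard branch does not close the gap. Concretely: with $k=|I|$, in \emph{both} the chain case and the antichain case of your Mirsky dichotomy you need a set of size at least $(k-1)!\,n^{k}$ on which to invoke the inductive hypothesis for the remaining $k-1$ colourings. But Mirsky (or Dilworth) gives chain length times antichain size bounded by $|X|$, so achieving $(k-1)!\,n^{k}$ in whichever case occurs forces $|X|\ge\bigl((k-1)!\,n^{k}\bigr)^{2}$, which for $k\ge 3$ far exceeds the available $k!\,n^{k+1}$. The ``factor $k$ of buffer'' is nowhere near enough, and no rebalancing of the threshold can fix this: one side always falls short.

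The paper avoids this by \emph{not} committing to an index $i_0$ in advance. After normalising all $f_i$ to be increasing, it works with the single increasing colouring $f=\min_{i\in I} f_i$ and applies Lemma~\ref{lem:1730} to $f$ with the asymmetric parameters $s=|I|!\,n^{|I|}$ and $t=n$. In the $+1$-case the resulting $n$-set is automatically $+1$-monochromatic for every $f_i$ and we are done outright. In the $-1$-case one obtains a set $A$ of size $|I|!\,n^{|I|}$ with $f(az)=-1$ for all $a<z=\max(A)$; since $f$ is a minimum, this only says that for each $a$ \emph{some} $f_i(az)=-1$. A pigeonhole on $i$ then singles out an index $i(\star)$ with $|\{a:f_{i(\star)}(az)=-1\}|\ge (|I|-1)!\,n^{|I|}$, and monotonicity of $f_{i(\star)}$ makes this set (together with $z$) fully $-1$-monochromatic for $f_{i(\star)}$, ready for induction on $I\setminus\{i(\star)\}$. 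The point is that the large $-1$-set is produced \emph{before} the index is chosen, so the factor $|I|$ lost to pigeonhole is all that is paid, rather than the square you would need.
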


\begin{proof}
	Replacing some of the colourings $f_i$ by their opposites, if necessary, we may assume 
	that all of them are increasing. We argue by induction on $|I|$, the base case $|I|=1$ 
	being dealt with in the foregoing lemma. Now suppose that $|I|\ge 2$ and that 
	the lemma has already been established for every set $I'\subsetneq I$ instead of $I$.
	Given a set $X$ and a family of increasing colourings $(f_i)_{i\in I}$ of the pairs of 
	elements from $X$ we define an auxiliary colouring $f\colon X^{(2)}\lra \Phi$ by 
	$f=\min\{f_i\colon i\in I\}$, i.e., 
	\[
		f(xy)=+1 
		\,\,\, \Longleftrightarrow \,\,\,
		\forall i\in I\,\, f_i(xy)=+1\,.
	\]

	Evidently, $f$ is increasing as well. So by Lemma~\ref{lem:1730} there either exists 
	a set $A\subseteq X$ with $|A|\ge |I|!n^{|I|}$ that is monochromatic for $f$ in colour $-1$
	or there is a set $Z\subseteq X$ of size $n$ that is monochromatic for $f$ in colour $+1$. 
	In the latter case $Z$ monochromatically has colour~$+1$ with respect to every $f_i$ 
	and thus it possesses the desired property. 
	
	So suppose from now on that the former case occurs, i.e., that we have obtained
	a set~$A$ of size $|A|\ge |I|!n^{|I|}$ that is monochromatic with respect to $f$ in colour~$-1$. 
	Put $z=\max(A)$ and $X_i=\{a\in A\colon f_i(az)=-1\}$ for every $i\in I$. 
	Since $f(az)=-1$ holds for all $a\in A\setminus\{z\}$, we know 
	that $\bigcup_{i\in I}X_i=A\setminus\{z\}$.
	Owing to the box principle (Schubfachprinzip) this 
	implies $|X_{i(\star)}|\ge (|I|-1)!n^{|I|}$ for 
	some $i(\star)\in I$. As in the first case of the proof of Lemma~\ref{lem:1730} one can 
	show that the set $X_{i(\star)}\cup\{z\}$ monochromatically receives the colour $-1$ with 
	respect to~$f_{i(\star)}$. It remains to apply the induction hypothesis to this set and 
	to $I'=I\setminus \{i(\star)\}$. 
\end{proof}

Let us now return to an arbitrary colouring $f\colon X^{(r)}\lra \Gamma$, where $X\subseteq \NN$,
$r\ge 2$, and $\Gamma$ denotes a set of colours. We say that $f$ is {\it governed} by another 
colouring $f'\colon X^{(\ell)}\lra\Gamma$, where $\ell\in [r-1]$, 
if $f(x_1, \dots, x_r)=f'(x_1, \dots, x_\ell)$
holds for all $x_1<\dots <x_r$ from $X$. 
Standard references call $f$ {\it end-homogeneous}
if it is governed by some $(r-1)$-ary function~$f'$ in this sense, but for us it will be useful 
to have words expressing the relationship between~$f$ and~$f'$. 

\begin{lemma}\label{lem:2150}
	Given integers $n\ge r\ge 2$ and $s\ge 3$, a set of colours $\Gamma$, and a nonempty finite index 
	set $I$ let $X\subseteq \NN$ be a set of 
	size $|X|\ge (2|\Gamma|)^{n^{r-1}} \cdot (2|I|n^{s-2})^{|I|n^{s-2}}$. 
	If $f\colon X^{(r)}\lra\Gamma$ is an arbitrary colouring and $(g_i)_{i\in I}$ 
	with $g_i\colon X^{(s)}\lra \Phi$ is a family of 
	monotone colourings, then there exist a set $Z\subseteq X$ of size $n$, a colouring 
	$f'\colon Z^{(r-1)}\lra \Gamma$ and a family $(g'_i)_{i\in I}$ of monotone colourings 
	$g'_i\colon Z^{(s-1)}\lra \Phi$ such that 
	\begin{enumerate}[label=\alabel]
		\item\label{it:aa} $f'$ governs the restriction of $f$ to $Z^{(r)}$,
		\item\label{it:bb} for every $i\in I$ the colouring $g'_i$ governs the restriction 
			of $g_i$ to $Z^{(s)}$,
		\item\label{it:cc} and for every $i\in I$ the colourings $g_i$, $g'_i$ are monotone in 
			opposite directions.
	\end{enumerate}
\end{lemma}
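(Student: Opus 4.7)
The plan is to construct the desired set $Z=\{z_1<z_2<\dots<z_n\}$ greedily. Setting $A_0:=X$, at step $k+1$ (for $k=0,1,\dots,n-1$) I pick $z_{k+1}:=\min(A_k)$ and refine $A_k\setminus\{z_{k+1}\}$ to some $A_{k+1}$ by two successive reductions. First, I partition $A_k\setminus\{z_{k+1}\}$ according to the tuple $\bigl(f(S'\cup\{z_{k+1},y\})\bigr)_{S'\in\{z_1,\dots,z_k\}^{(r-2)}}$ into at most $|\Gamma|^{\binom{k}{r-2}}$ classes and keep the largest. Second, I apply Lemma~\ref{lem:1731} to the family of monotone pair colourings $(y,z)\longmapsto g_i(T'\cup\{z_{k+1},y,z\})$ indexed by $(i,T')\in I\times\{z_1,\dots,z_k\}^{(s-3)}$; these inherit monotonicity from~$g_i$, and the resulting $A_{k+1}$ is monochromatic, hence \emph{constant}, for each such pair colouring. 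Finally set $Z:=\{z_1,\dots,z_n\}$.

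Induction on $k$ shows that, for every $(r-1)$-subset $S$ of $\{z_1,\dots,z_k\}$, the value $f(S\cup\{y\})$ is constant as $y$ ranges over $A_k$; and that for every $i\in I$ and every $(s-2)$-subset $\vec{u}\subseteq\{z_1,\dots,z_k\}$, the value $g_i(\vec{u}\cup\{y,z\})$ is constant as $\{y,z\}$ ranges over all pairs in $\{z_{j+1},\dots,z_k\}\cup A_k$ above $\max(\vec{u})$ (with $j$ the index of $\max(\vec{u})$). At $k=n$ these invariants yield a well-defined $f'\colon Z^{(r-1)}\to\Gamma$ governing $f$, and, for each $i$, a function $c_i\colon Z^{(s-2)}\to\Phi$ satisfying $g_i(\vec{u}\cup\{y,z\})=c_i(\vec{u})$ for every $\vec{u}\in Z^{(s-2)}$ and every pair $\{y,z\}\subseteq Z\setminus\vec{u}$ above $\max(\vec{u})$.

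The key observation is that defining $g'_i(T):=c_i\bigl(T\setminus\{\max T\}\bigr)$ produces a colouring $g'_i\colon Z^{(s-1)}\to\Phi$ that governs $g_i$ on $Z^{(s)}$ and is \emph{constant in its last argument}: $g'_i(\vec{u}\cup\{y\})=c_i(\vec{u})$ does not depend on $y$. A colouring constant in its last argument is simultaneously both increasing and decreasing, and in particular monotone in the direction opposite to~$g_i$. Thus parts \ref{it:aa}, \ref{it:bb}, and \ref{it:cc} of the lemma all fall out of the stronger ``pair-constancy'' secured by the construction, without any need to argue separately about monotonicity of the derived colouring.

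The main obstacle is the explicit size bound on $|X|$. The $f$-pigeonhole at step $k+1$ contributes a factor $|\Gamma|^{\binom{k}{r-2}}$; by the hockey-stick identity these aggregate to $|\Gamma|^{\binom{n}{r-1}}\le|\Gamma|^{n^{r-1}}$, which together with the $+1$ losses for extracting $z_{k+1}$ accounts for the first factor $(2|\Gamma|)^{n^{r-1}}$. The subtler part is the aggregate cost of the Lemma~\ref{lem:1731} applications: at step $k+1$ they involve $|I|\binom{k}{s-3}$ monotone pair colourings, so across all steps the total number of pair colourings handled is $|I|\binom{n}{s-2}\le|I|n^{s-2}$. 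Balancing the target sizes across steps so that the factorial prefactors in Lemma~\ref{lem:1731} do not compound wastefully is the quantitative heart of the argument, and is what delivers the second factor $(2|I|n^{s-2})^{|I|n^{s-2}}$.
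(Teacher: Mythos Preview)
Your qualitative strategy is sound: reducing $g_i$ on $Z^{(s)}$ all the way down to an $(s-2)$-ary function $c_i$ gives a $g'_i$ that is constant in its last argument and therefore monotone in either direction, which is stronger than what the lemma asks. The problem is purely quantitative, and it is fatal for the stated bound.

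The cost of Lemma~\ref{lem:1731} is not multiplicative: with $J$ colourings it requires input of size $J!\,m^{J+1}$ to produce output of size $m$. When you iterate it over $n$ greedy steps, the exponents $J_k+1$ compound \emph{multiplicatively}. Concretely, take $s=3$: then $J_k=|I|$ at every step, and working the recursion backwards from $|A_{n-1}|\ge 1$ forces $\log|A_0|$ of order $(|I|+1)^{n}$. For general $s$ the product $\prod_k(J_k+1)$ is still exponential in $n$. No ``balancing of target sizes'' can avoid this, because each application of Lemma~\ref{lem:1731} shrinks the set by a genuine power, not by a factor; the trouble is the exponent $J+1$, not the factorial prefactor. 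The lemma, however, promises $\log|X|$ polynomial in $n$, so your scheme overshoots by a full exponential.

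The paper does something genuinely different. It does \emph{not} force constancy of the pair colourings $(y,z)\mapsto g_i(T'\cup\{z_{k+1},y,z\})$. Instead, at each step it selects a maximal ``candidate'' pair $(W_\star,D_\star)$: a collection $W_\star$ of indices $(i,Z)$ for which the corresponding pair colouring is already identically $-1$ on some large $D_\star\subseteq A_k$ with $|D_\star|\ge (2|I|n^{s-2})^{-|W_\star|}|A_k|$. Maximality of $W_\star$ is then used to pick $z_{k+1}$ from the lower half of $D_\star$ so that every remaining pair colouring takes the value $+1$ on $\{z_{k+1}\}\times(\text{upper half})$. One then \emph{defines} $g'_i(Z\cup\{z_{k+1}\})$ to be $-1$ precisely on $W_\star$ (and on pairs outside $L$), which makes $g'_i$ decreasing without being constant in its last argument. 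The crucial gain is that the cost per step is now a \emph{factor} $(2|I|n^{s-2})^{|W_\star|}$, and a bookkeeping parameter $\Omega$ telescopes the sum $\sum_k|W_\star|$ to at most $|I|\binom{n}{s-2}\le|I|n^{s-2}$. That is exactly how the second factor $(2|I|n^{s-2})^{|I|n^{s-2}}$ arises.

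In short: your observation that the total number of pair colourings handled is $|I|\binom{n}{s-2}$ is correct and is morally why the bound should hold, but invoking Lemma~\ref{lem:1731} step by step is too blunt an instrument to cash it in; one needs the candidate/telescoping mechanism instead.
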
     

\begin{proof}
	As usual, we may suppose that every $g_i$ is increasing. 
	An integer $\nu\in [n]$ is said to be {\it secure} if there exist 
	sets $Z_\nu, A_\nu\subseteq X$
	with $|Z_\nu|=\nu$ and $\max(Z_\nu)<\min(A_\nu)$, 
	a function $f'\colon Z_\nu^{(r-1)}\lra \Gamma$, and a family 
	$(g'_i)_{i\in I}$ of decreasing colourings $g'_i\colon Z_\nu^{(s-1)}\lra \Phi$ 
	with the following properties.
	\begin{enumerate}[label=\rmlabel]
		\item\label{it:51} If the numbers $z_1<\dots<z_{r-1}$ are in $Z_\nu$ and $z_r>z_{r-1}$ 
				is in $Z_\nu\cup A_\nu$, then $f(z_1, \dots, z_r)=f'(z_1, \dots, z_{r-1})$.
		\item\label{it:52} If $i\in I$, the numbers $z_1<\dots<z_{s-1}$ are in $Z_\nu$, 
				and $z_{s}>z_{s-1}$ is in $Z_\nu\cup A_\nu$, 
				then $g_i(z_1, \dots, z_{s})=g'_i(z_1, \dots, z_{s-1})$.
		\item\label{it:53} If $i\in I$, the numbers $z_1<\dots<z_{s-1}$ are in $Z_\nu$, 
				and $g'_i(z_1, \dots, z_{s-1})=-1$, then $g_i(z_1, \dots, z_{s-2}, y, y')=-1$
				holds for all $y, y'\in A_\nu$.
		\item\label{it:54} Finally, 
			$|A_\nu|\ge (2|\Gamma|)^{n^{r-2}(n-\nu)} \cdot (2|I|n^{s-2})^{|I|n^{s-2}-\Omega}$, 
			where $\Omega$ denotes the number of pairs $(i, Z)$ such that $i\in I$, 
			$\max(Z_\nu)\in Z\in Z_\nu^{(s-1)}$, and $g'_i(Z)=-1$.
	\end{enumerate}
	
   Observe that~\ref{it:51} and~\ref{it:52} are fairly standard conditions in arguments concerning 
   end-homogeneity. Except for the presence of $\Omega$ so is~\ref{it:54}. Clause~\ref{it:53}
   interacts well with the demand that the functions $g'_i$ be decreasing. 
  
	We contend that $1$ is secure. If $r\ge 3$ this can be seen by taking $Z_1=\{\min(X)\}$ 
	and~$A_1=X\setminus Z_1$. In the special case $r=2$ we still set~$Z_1=\{\min(X)\}$ but now 
	we need to pay some attention to condition~\ref{it:51}. The box principle 
	yields a colour~$\gamma\in \Gamma$ and a set $A_1\subseteq X\setminus Z_1$ of size 
	$|A_1|\ge (|X|-1)/|\Gamma|$ such that $f(\min(X), a_1)=\gamma$ holds for all~$a_1\in A_1$. 
	Since $A_1$ is at least as large as~\ref{it:54} demands, $1$ is indeed secure.  
	
	Notice that if $n$ turns out to be secure, then any set $Z_n$ exemplifying this 
   will have the desired properties. So it suffices to prove that the largest secure number, 
   denoted by $\nu(\star)$ in the sequel, is equal to $n$. 
  
   Assume for the sake of contradiction that $\nu(\star)<n$ and let the sets $Z_{\nu(\star)}$, 
   $A_{\nu(\star)}$, the function  $f'\colon Z_{\nu(\star)}^{(r-1)}\lra \Gamma$, and the 
   family $(g'_i)_{i\in I}$ of decreasing colourings $g'_i\colon Z_{\nu(\star)}^{(s-1)}\lra \Phi$
   witness the security of $\nu(\star)$. Moreover, let $\Omega$ denote the quantity 
   occurring in~\ref{it:54}, i.e., the number of pairs $(i, Z)$ such that $i\in I$, 
	$\max(Z_{\nu(\star)})\in Z\in Z_{\nu(\star)}^{(s-1)}$, and $g'_i(Z)=-1$.
   Due to $\Omega\le |I|\binom{\nu(\star)-1}{s-2} < |I|n^{s-2}$ 
   the second factor in~\ref{it:54} is integral and we may assume without loss of generality 
   that 
   \[
   	|A_{\nu(\star)}|
		=
		(2|\Gamma|)^{n^{r-2}(n-\nu(\star))} \cdot (2|I|n^{s-2})^{|I|n^{s-2}-\Omega}\,.
	\]

   Now the plan is to reach a contradiction by establishing the security of $\nu(\star)+1$.
   To this end we shall add a new element to $Z_{\nu(\star)}$ and extend the functions $f'$
   and $g'_i$ appropriately. The condition that seems hardest to maintain is~\ref{it:53}. 
   In fact, this task requires some preparations that have to be carried out before one decides which
   element gets added to~$Z_{\nu(\star)}$. 
    
   Set $z_{\nu(\star)}=\max(Z_{\nu(\star)})$ and define 
   \[
 		L		
		=
		\bigl\{(i, Z)\in I\times Z_{\nu(\star)}^{(s-2)}
			\colon \text{ If } z_{\nu(\star)}\not\in Z, 
				\text{ then } g'_i(Z\cup\{z_{\nu(\star)}\})=+1\bigr\}\,.
	\]
	A pair of sets $(W, D)$ with $W\subseteq L$ and
	$D\subseteq A_{\nu(\star)}$ is said to be a {\it candidate} 
	if 
	\begin{enumerate}
		\item[$\bullet$] for all $(i, Z)\in W$ and all $dd'\in D^{(2)}$ 
			we have $g_i(Z\cup\{d, d'\})=-1$, 
		\item[$\bullet$] 	and $|D|\ge (2|I|n^{s-2})^{-|W|} |A_{\nu(\star)}|$. 
	\end{enumerate}
	For instance, $(\varnothing,  A_{\nu(\star)})$ is a candidate and, consequently, we can 
	pick a candidate $(W_\star, D_\star)$ for which $|W_\star|$ is maximal. 
	Since $\Omega+|W_\star|\le \Omega+|L|\le |I|\binom{\nu(\star)}{s-2}\le |I|n^{s-2}$,
	we may again assume that 
	\[
		|D_\star|
		= 
		(2|I|n^{s-2})^{-|W_\star|} |A_{\nu(\star)}|
		=
		(2|\Gamma|)^{n^{r-2}(n-\nu(\star))} \cdot (2|I|n^{s-2})^{|I|n^{s-2}-\Omega-|W_\star|}\,.
	\]

	Now we partition $D_\star$ into two consecutive halves, i.e., we write $D_\star=D'\dcup D''$ 
	such that $|D'|=|D''|=\frac12 |D_\star|$ and $D'<D''$ (i.e., $\max(D')<\min(D'')$).
	Put $d''_\star=\min(D'')$ and consider 
	for every pair $(i, Z)\in L \setminus W_\star$ the set 
	\[
		X(i, Z)=\bigl\{d'\in D'\colon g_i(Z\cup \{d', d''_\star\})=-1\bigr\}\,.
	\]

	Assume for the sake of contradiction that there exists a pair $(i, Z)\in L\setminus W_\star$
	such that $|X(i, Z)|\ge (2|I|n^{s-2})^{-1}|D_\star|$. Now for all $y<y'$ from $X(i, Z)$
	the assumption that $g_i$ is increasing 
	yields $g_i(Z\cup \{y, y'\})\le g_i(Z\cup \{y, d''_\star\})=-1$, 
	whence $g_i(Z\cup \{y, y'\})=-1$. In other words, $\bigl(W_\star\cup\{(i, Z)\}, X(i, Z)\bigr)$
	is a candidate contradicting the maximality of $|W_\star|$. 
	
	We have thereby proved $|X(i, Z)|< (2|I|n^{s-2})^{-1}|D_\star|$ for all pairs 
	$(i, Z)\in L\setminus W_\star$. Since $|L\setminus W_\star|\le |L|\le |I|n^{s-2}$,
	this implies 
	\[
		\bigg|\bigcup_{(i, Z)\in L\setminus W_\star}X(i, Z)\bigg|<\tfrac12|D_\star|=|D'|\,.
	\]
	Therefore, we can pick an element 
	\[
		z_{\nu(\star)+1}\in D'\setminus \bigcup_{(i, Z)\in L\setminus W_\star}X(i, Z)
	\]
	and set $Z_{\nu(\star)+1}=Z_{\nu(\star)}\cup\{z_{\nu(\star)+1}\}$. 
	
	With the help of this set we intend to convince ourselves that $\nu(\star)+1$ is secure, 
	which will contradict our maximal choice of $\nu(\star)$ and thus conclude the proof 
	of Lemma~\ref{lem:2150}. Towards this goal, we need to specify an appropriate 
	set $A_{\nu(\star)+1}$, a function $f''\colon Z_{\nu(\star)+1}^{(r-1)}\lra \Gamma$, 
	and a family $(g''_i)_{i\in I}$ of decreasing 
	colourings $g''_i\colon Z_{\nu(\star)+1}^{(s-1)}\lra \Phi$ such that the 
	conditions~\ref{it:51}\,--\,\ref{it:54} hold in this setting. These choices will be made 
	in such a way that $A_{\nu(\star)+1}\subseteq D''$, the function $f''$ extends $f'$, and 
	for every $i\in I$ the colouring $g''_i$ extends $g'_i$. 
	
	Our first step consists in determining $A_{\nu(\star)+1}$ and $f''$. To this end, we define 
	for every $d''\in D''$ the function $h_{d''}\colon Z_{\nu(\star)}^{(r-2)}\lra \Gamma$ by 
	\[
		h_{d''}(Z)=f(Z\cup\{z_{\nu(\star)+1}, d''\})
		\quad \text{ for all } Z\in Z_{\nu(\star)}^{(r-2)}\,.
	\]
	Since $|Z_{\nu(\star)}^{(r-2)}|\le n^{r-2}$, there are at most $|\Gamma|^{n^{r-2}}$ possible 
	functions from $Z_{\nu(\star)}^{(r-2)}$ to $\Gamma$ and the box principle yields 
	a function $h_\star\colon Z_{\nu(\star)}^{(r-2)}\lra \Gamma$ together with a 
	set $A_{\nu(\star)+1}\subseteq D''$ satisfying $h_{d''}=h_\star$ for all $d''\in A_{\nu(\star)+1}$
	and $|A_{\nu(\star)+1}|\ge |\Gamma|^{-n^{r-2}}|D''|$. For later use we remark that 
	\[
		 |A_{\nu(\star)+1}|
		 \ge 
		 \frac{|D''|}{|\Gamma|^{n^{r-2}}}
		 =
		 \frac{|D_\star|}{2|\Gamma|^{n^{r-2}}}
		 \ge
		 (2|\Gamma|)^{n^{r-2}(n-\nu(\star)-1)} \cdot (2|I|n^{s-2})^{|I|n^{s-2}-\Omega-|W_\star|}\,.
	\]

	Now we define the extension $f''\supseteq f'$ by $f''(Z\cup \{z_{\nu(\star)+1}\})=h_\star(Z)$
	for all $Z\in Z_{\nu(\star)}^{(r-2)}$ and observe that~\ref{it:51} clearly remains valid. 
	
	Proceeding with the extensions $g''_i\supseteq g'_i$ for $i\in I$ we set 
	\[
		g''_i(Z\cup \{z_{\nu(\star)+1}\})=
		\begin{cases}
			+1 & \text{if } (i, Z)\in L\setminus W_\star \\
			-1 & \text{otherwise } 
		\end{cases}
	\]
	for all $i\in I$ and $Z\in Z_{\nu(\star)}^{(s-2)}$. Let us start the discussion of these 
	colourings by verifying that they are indeed decreasing. In other words, we want to check 
	that if $i\in I$ and $Z\in (Z_{\nu(\star)}\setminus \{z_{\nu(\star)}\})^{(s-2)}$ satisfy 
	$g''(Z\cup \{z_{\nu(\star)}\})=-1$, then $g''(Z\cup \{z_{\nu(\star)+1}\})=-1$ follows.
	To this end we just need to observe that $g''(Z\cup \{z_{\nu(\star)}\})=-1$ implies 
	$(i, Z)\not\in L$. 
	
	For the confirmation of~\ref{it:52} it suffices to argue that
	if $i\in I$, $Z\in Z_{\nu(\star)}^{(s-2)}$, and $d''\in D''$, then 
	$g_i(Z\cup\{z_{\nu(\star)+1}, d''\})=g''_i(Z\cup \{z_{\nu(\star)+1}\})$. 
	If $(i, Z)\in W_\star$, then the first bullet in the definition of $(W_\star, D_\star)$
	being a candidate yields indeed $g_i(Z\cup\{z_{\nu(\star)+1}, d''\})=-1$. 
	Next, if $(i, Z)\in L\setminus W_\star$, then $z_{\nu(\star)+1}\not\in X(i, Z)$
	entails $g_i(Z\cup \{z_{\nu(\star)+1}, d''_\star\})=+1$ and thus the assumption that $g_i$ is 
	increasing and $d''_\star=\min(D'')\le d''$ reveal $g_i(Z\cup\{z_{\nu(\star)+1}, d''\})=+1$,
	as desired. Finally, suppose that $(i, Z)\not\in L$, whence $z_{\nu(\star)}\not\in Z$
	and $g'_i(Z\cup\{z_{\nu(\star)}\})=-1$. Since $g'_i$ satisfies~\ref{it:53} 
	and $z_{\nu(\star)+1}, d''\in A_{\nu(\star)}$, we have 
	indeed $g_i(Z\cup \{z_{\nu(\star)+1}, d''\})=-1$. Altogether, we have thereby proved 
	that the colourings $g''_i$ still obey condition~\ref{it:52}.
	
	To handle the new cases of~\ref{it:53} we show that if $Z\in Z_{\nu(\star)}^{(s-2)}$ 
	and either $(i, Z)\in W_\star$ or $(i, Z)\not\in L$, then $g_i(Z\cup\{y, y'\})=-1$ holds for 
	all $yy'\in D_\star^{(2)}$. For $(i, Z)\in W_\star$ this is a direct consequence 
	of $(W_\star, D_\star)$ being a candidate, so we may suppose $(i, Z)\not\in L$ from now on,
	which, let us recall, means $z_{\nu(\star)}\not\in Z$ and $g'_i(Z\cup\{z_{\nu(\star)}\})=-1$.  
	So the desired conclusion is already covered by $g'_i$ satisfying~\ref{it:53}.
	
	Finally, we need to verify the lower bound on $|A_{\nu(\star)+1}|$ promised by~\ref{it:54}. 
	Denoting the number of pairs $(i, Z)$ such that $i\in I$, 
	$z_{\nu(\star)+1}\in Z\in Z_{\nu(\star)+1}^{(s-1)}$, and $g''_i(Z)=-1$ by $\Omega'$ it 
	suffices to check that $\Omega' = \Omega+|W_\star|$. Since $\Omega'$ counts the number 
	of pairs $(i, Z)\in I\times Z_{\nu(\star)}^{(s-2)}$ 
	satisfying $g''_i(Z\cup\{z_{\nu(\star)+1}\})=-1$, or in other words $Z\not\in L\setminus W_\star$,
	we have indeed 
	\begin{align*}
		\Omega'
		&=
		|I\times Z_{\nu(\star)}^{(s-2)}\setminus (L\setminus W_\star)| \\
		&=
		|I\times Z_{\nu(\star)}^{(s-2)}\setminus L|+|W_\star| \\
		&=
		\big|\bigl\{(i, Z)\in I\times Z_{\nu(\star)}^{(s-2)}\colon 
			z_{\nu(\star)}\not\in Z \text{ and } g'_i(Z\cup\{z_{\nu(\star)}\})=-1 \bigr\}\big|+|W_\star|\\
		&=
		\Omega+|W_\star|\,.
	\end{align*}

	This concludes the proof that $\nu(\star)+1$ is secure and, as we said before, this contradiction 
	to the choice of $\nu(\star)$ establishes Lemma~\ref{lem:2150}.   
\end{proof}

We remark that the case $I=\varnothing$ can be handled similarly but much easier. 
In this case one can ignore all parts of the proof addressing $I$ and, moreover,
there is no need to divide the set $D_\star=A_{\nu(\star)}$ into two halves. Therefore, one 
obtains the following well-known statement. 

\begin{cor}\label{cor:0053}
	If $n\ge r\ge 2$ and $N\ge |\Gamma|^{n^{r-1}}$, then for every 
	colouring $f\colon [N]^{(r)}\lra \Gamma$ there are a set $X\subseteq [N]$
	of size $n$ and a function $f'\colon X^{(r-1)}\lra \Gamma$ governing the 
	restriction of $f$ to $X^{(r)}$. \hfill $\Box$  
\end{cor}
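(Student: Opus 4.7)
The plan is to follow the structure of the proof of Lemma~\ref{lem:2150} in the degenerate case $I = \varnothing$, as indicated in the remark immediately preceding the corollary. With no family $(g_i)_{i \in I}$ to track, the machinery of candidate pairs $(W, D)$, the auxiliary set $L$, and the splitting of $D_\star$ into two halves all become superfluous; only the end-homogeneity condition analogous to~\ref{it:51} of Lemma~\ref{lem:2150} survives, together with a streamlined version of the reservoir bound (with $\Omega = 0$ and no factor of $2$). What is left is the classical end-homogenisation pigeonhole.

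Concretely, I would inductively construct for $\nu = 0, 1, \dots, n$ sets $Z_\nu \subseteq [N]$ with $|Z_\nu| = \nu$, reservoirs $A_\nu \subseteq [N]$ with $\max(Z_\nu) < \min(A_\nu)$, and functions $f_\nu' \colon Z_\nu^{(r-1)} \lra \Gamma$ such that $f(z_1, \dots, z_r) = f_\nu'(z_1, \dots, z_{r-1})$ whenever $z_1 < \dots < z_{r-1}$ lie in $Z_\nu$ and $z_r > z_{r-1}$ lies in $Z_\nu \cup A_\nu$, while $|A_\nu| \ge |\Gamma|^{n^{r-2}(n-\nu)}$. The base case is trivial: take $Z_0 = \varnothing$, $A_0 = [N]$, and $f_0'$ the empty function; the hypothesis $|[N]| \ge |\Gamma|^{n^{r-1}}$ supplies the required reservoir size. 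Setting $X = Z_n$ and $f' = f_n'$ at the end then proves the corollary, since any top element $x_r$ of a set in $X^{(r)}$ automatically lies in $Z_n \cup A_n$.

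For the inductive step from $\nu$ to $\nu + 1 \le n$, I would take $z_{\nu+1} = \min(A_\nu)$ and, for each $d \in A_\nu \setminus \{z_{\nu+1}\}$, consider the function $h_d \colon Z_\nu^{(r-2)} \lra \Gamma$ given by $h_d(S) = f(S \cup \{z_{\nu+1}, d\})$. Since $|Z_\nu^{(r-2)}| \le n^{r-2}$, there are at most $|\Gamma|^{n^{r-2}}$ such functions, so the pigeonhole principle yields some $h_\star$ and a subset $A_{\nu+1} \subseteq A_\nu \setminus \{z_{\nu+1}\}$ with $h_d \equiv h_\star$ on $A_{\nu+1}$ and $|A_{\nu+1}| \ge |\Gamma|^{-n^{r-2}}(|A_\nu| - 1)$. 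Extending $f_\nu'$ by declaring $f_{\nu+1}'(S \cup \{z_{\nu+1}\}) = h_\star(S)$ for $S \in Z_\nu^{(r-2)}$ preserves the governing condition, and the new reservoir bound is exactly what the induction promised. No genuine obstacle is to be expected; the only bookkeeping point is that the $n$ iterations together consume a factor of $|\Gamma|^{n \cdot n^{r-2}} = |\Gamma|^{n^{r-1}}$, matching the hypothesis, with the minor off-by-ones from the $-1$ inside each step absorbed comfortably within the slack.
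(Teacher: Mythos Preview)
Your proposal is correct and follows essentially the same approach as the paper indicates: it is exactly the proof of Lemma~\ref{lem:2150} stripped of everything pertaining to $I$, with no halving of the reservoir and the factor $2$ removed from the bound. The only cosmetic difference is that you begin the induction at $\nu=0$ rather than $\nu=1$; your remark about the $-1$ being absorbed is justified because pigeonhole gives a ceiling, so from $|A_\nu|\ge|\Gamma|^{n^{r-2}(n-\nu)}$ one obtains $|A_{\nu+1}|\ge\lceil(|A_\nu|-1)/|\Gamma|^{n^{r-2}}\rceil=|\Gamma|^{n^{r-2}(n-\nu-1)}$ exactly.
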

  
\subsection{Definite orderings} \label{subsec:32}
Resuming the task of proving Theorem~\ref{thm:upper} we shall now study 
orderings $([N]^{(k)}, \strictif)$ that are ``almost canonical'' in the sense 
that we can easily extract a sign vector $\eps=(\eps_1, \dots, \eps_k)\in \{+1, -1\}^k$ and a 
permutation $\sigma\in \gS_k$ from them such that $\strictif$ behaves in some important ways 
like the canonical ordering associated with $(\eps, \sigma)$. For transparency we shall 
assume $N\ge 2k+1$ throughout this investigation. 

Observe that in the canonical case, an entry $\eps_i=+1$ in the sign vector indicates that 
we can increase a set $x\in [N]^{(k)}$ with respect to $\strictif$ by increasing its element 
in the $i^{\mathrm{th}}$ position, whereas an entry $\eps_i=-1$ signifies that it is the other 
way around. When $\strictif$ is arbitrary and $i\in [k]$ we call a 
triple $S=(X, A, Y)\in [N]^{(i-1)}\times [N]^{(2)}\times [N]^{(k-i)}$ with $X<A<Y$ 
an {\it $i$-decider}.
Notice that $i$-deciders are in natural bijective correspondence with $(k+1)$-subsets of $[N]$.
If $S=(X, A, Y)$ is such an $i$-decider and $A=\{c, d\}$ with $c<d$ we may compare the sets 
$S(c)=X\cup\{c\}\cup Y$ and $S(d)=X\cup\{d\}\cup Y$ with respect to~$\strictif$. 
If $S(c)\strictif S(d)$ we set $\eps_i(S)=+1$ and in case $S(d)\strictif S(c)$ we put $\eps_i(S)=-1$.
So roughly speaking, $\eps_i(S)=+1$ means that the $i$-decider $S$ ``thinks'' that $\eps_i$ should
have the value~$+1$. If all $i$-deciders $S$ agree and yield the same sign $\eps_i(S)$ we say 
that $\strictif$ is {\it $i$-definite} and denote the common value of all $\eps_i(S)$ by $\eps_i$. 
It may happen that $\strictif$ is $i$-definite for every $i\in [k]$ and in this case we 
call the ordering $\strictif$ {\it sign-definite} and $\eps=(\eps_1, \dots, \eps_k)$ is referred 
to as its {\it sign-vector}.
Clearly, the canonical ordering associated with a pair $(\eps, \sigma)\in \{+1, -1\}^k\times \gS_k$
is sign-definite and has sign-vector $\eps$ in this sense. 

Next we explain the extraction of a permutation from an ``almost canonical'' ordering. 
Let $ij\in [k]^{(2)}$ with $i<j$ be an arbitrary pair of indices. 
Observe that for a permutation~$\sigma\in \gS_k$, the number $i$ occurs before $j$ in the list 
$\sigma(1), \dots, \sigma(k)$ if and only if $\sigma^{-1}(i) < \sigma^{-1}(j)$. 
So, for the canonical ordering considered in Definition~\ref{def:canonical}, 
$\sigma^{-1}(i) < \sigma^{-1}(j)$ 
indicates that $i$ has in the following sense higher priority than $j$. 
If we take a set $x\in [N]^{(k)}$ and change 
both its $i^{\mathrm{th}}$ and its $j^{\mathrm{th}}$ element, while keeping the relative positions 
of the elements fixed, then whether we increase or decrease $x$ with respect to $\strictif$ is 
decided by $\eps_i$ and the direction in which we move the $i^{\mathrm{th}}$ element alone. 
Similarly $\sigma^{-1}(j) < \sigma^{-1}(i)$ means that $j$ has higher priority than $i$. 
Now suppose that the 
ordering $\strictif$ is not necessarily canonical but still sign-definite with 
sign vector $\eps=(\eps_1, \dots, \eps_k)$. By an {\it $ij$-decider}
we mean a quintuple 
\[
	P=(X, A, Y, B, Z)\in [N]^{(i-1)}\times [N]^{(2)}\times [N]^{(j-i-1)}
		\times [N]^{(2)}\times [N]^{(k-j)}
\]
with $X<A<Y<B<Z$. 
So $ij$-deciders are in natural bijective correspondence with $(k+2)$-subsets of $[N]$. 
Given such an $ij$-decider $P=(X, A, Y, B, Z)$ and two elements 
$a\in A$, $b\in B$ we set $P(a, b)=X\cup \{a\}\cup Y\cup \{b\}\cup Z$. 
The restriction of $\strictif$ to the four-element set 
\[
	\langle P\rangle=\bigl\{P(a, b)\colon (a, b)\in A\times B\bigr\}
\]
is largely determined by $\eps_i$ and $\eps_j$. 
Notably, if we write $A=\{c_i, d_i\}$ and $B=\{c_j, d_j\}$ such that $\eps_ic_i<\eps_id_i$ and 
$\eps_jc_j<\eps_jd_j$, then the $\strictif$-minimum of $\langle P\rangle$ is $P(c_i, c_j)$ and 
the $\strictif$-maximum is~$P(d_i, d_j)$. So the only piece of information we are lacking 
is how $P(c_i, d_j)$
compares to~$P(d_i, c_j)$. We let $\sigma_{ij}(P)$ denote 
\begin{center}
\begin{tabular}{c|c}
the number & provided that  \\ \hline 
\rule{0pt}{3ex}  $+1$ &  $P(c_i, d_j)\strictif P(d_i, c_j)$, \\
\rule{0pt}{3ex}  $-1$ &  $P(d_i, c_j)\strictif P(c_i, d_j)$.
\end{tabular}
\end{center}
\smallskip

So intuitively $\sigma_{ij}(P)=+1$ means that $P$ ``believes'' $i$ to have higher priority 
than $j$, while in the opposite case it is the other way around. 
If $\sigma_{ij}(P)$ stays constant as $P$ varies over 
all $ij$-deciders we say that $\strictif$ is $ij$-definite and denote the common value of 
all $\sigma_{ij}(P)$ by $\sigma_{ij}$. Finally, if $\strictif$ is {\it $ij$-definite} for all 
pairs $ij\in[k]^{(2)}$ we call $\strictif$ a {\it permutation-definite} ordering. 
We remark that in the canonical case $\sigma_{ij}=+1$ is equivalent 
to $\sigma^{-1}(i)<\sigma^{-1}(j)$ meaning that the 
statement $\sigma_{ij}\cdot\bigl(\sigma^{-1}(j)-\sigma^{-1}(i)\bigr)>0$ is always valid.
Let us check that, conversely, in the permutation-definite case we can define a permutation $\sigma$ 
having this property. 

\begin{lemma}\label{lem:1518}
	Suppose that $N\ge 2k$. If the ordering $([N]^{(k)}, \strictif)$ is both sign-definite 
	and permutation-definite, then there exists a permutation $\sigma\in \gS_k$ such that 
	\[
		\sigma_{ij}\cdot \bigl(\sigma^{-1}(j)-\sigma^{-1}(i)\bigr)>0
	\]
	holds whenever $1\le i<j\le k$.
\end{lemma}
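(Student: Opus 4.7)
My plan is to use the definite values $\sigma_{ij}$ to define a tournament $T$ on $[k]$, to show that $T$ is transitive, and to read the desired permutation off from the resulting linear order. Declare $i\to j$ for $i<j$ precisely when $\sigma_{ij}=+1$, and $j\to i$ otherwise. The inequality to be proved asks for $\sigma^{-1}(i)<\sigma^{-1}(j)$ whenever $i\to j$, so once $T$ is known to be a linear order we can simply let $\sigma^{-1}$ be the rank function of that order. The cases $k\le 2$ are immediate (for $k=2$ there is only one non-trivial choice), so assume $k\ge 3$ from now on.

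The main obstacle is to rule out $3$-cycles in $T$. Fix indices $i<j<\ell$ in $[k]$; since $k\ge 3$, the hypothesis $N\ge 2k$ yields $N\ge k+3$, so we may pick a $(k+3)$-subset of $[N]$ and split it into consecutive blocks of sizes $i-1,\,2,\,j-i-1,\,2,\,\ell-j-1,\,2,\,k-\ell$. Label the three size-$2$ blocks $A=\{c_i,d_i\}$, $B=\{c_j,d_j\}$, and $C=\{c_\ell,d_\ell\}$, choosing notation so that $\eps_p c_p<\eps_p d_p$ for each $p\in\{i,j,\ell\}$, and for every $(a,b,c)\in A\times B\times C$ let $Q(a,b,c)$ be the $k$-set obtained by placing $a,b,c$ into positions $i,j,\ell$. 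The resulting eight ``cube vertices'' are linearly ordered by $\strictif$, and fixing any one of the three coordinates isolates an $ij$-, $i\ell$-, or $j\ell$-decider; hence, by sign-definiteness together with the definite values $\sigma_{ij}$, $\sigma_{i\ell}$, $\sigma_{j\ell}$, each of these three four-element faces has its $\strictif$-order fully determined.

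Suppose for contradiction that $T$ contains the cycle $i\to j\to\ell\to i$; the opposite orientation is symmetric. Then $\sigma_{ij}=\sigma_{j\ell}=+1$ and $\sigma_{i\ell}=-1$. Reading off the relevant face orderings in turn---fixing $c=c_\ell$, then $b=c_j$, then $a=c_i$---yields the three relations
\[
	Q(c_i,d_j,c_\ell)\strictif Q(d_i,c_j,c_\ell),\qquad
	Q(d_i,c_j,c_\ell)\strictif Q(c_i,c_j,d_\ell),\qquad
	Q(c_i,c_j,d_\ell)\strictif Q(c_i,d_j,c_\ell),
\]
which chain into a $\strictif$-cycle, a contradiction. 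Hence $T$ has no directed triangles, so it is a linear order on $[k]$, and the permutation $\sigma\in\gS_k$ whose inverse is the rank function of this order satisfies the lemma.
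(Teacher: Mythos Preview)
Your argument is correct: the cube construction on a $(k+3)$-set really does produce, for each fixed coordinate, a genuine $ij$-, $i\ell$-, or $j\ell$-decider, and chaining the three face relations yields a $\strictif$-cycle under either cyclic orientation of $\{i,j,\ell\}$. So the tournament is transitive and the permutation exists.

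The paper takes a different, somewhat slicker route. Using the full $2k$ elements guaranteed by $N\ge 2k$, it writes $\{2i-1,2i\}=\{c_i,d_i\}$ with $\eps_ic_i<\eps_id_i$ and sets $x_i=\{c_i\}\cup\{d_j\colon j\ne i\}$ for each $i\in[k]$. These $k$ sets are all distinct elements of $[N]^{(k)}$, so $\strictif$ linearly orders them; the permutation $\sigma$ is simply read off from $x_{\sigma(1)}\strictif\cdots\strictif x_{\sigma(k)}$. The point is that any two $x_i$, $x_j$ differ in exactly positions $i$ and $j$, so the quintuple built from the blocks is an $ij$-decider witnessing $\sigma_{ij}=+1\Leftrightarrow x_i\strictif x_j$. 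Transitivity of the tournament is then automatic, inherited from transitivity of $\strictif$ on $\{x_1,\dots,x_k\}$, and no separate $3$-cycle analysis is needed. Your approach trades this global construction for a local one on $k+3$ elements plus a case check; both work, but the paper's version makes the underlying reason (the tournament \emph{is} a restriction of a linear order) more transparent.
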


\begin{proof}
	Let $\eps=(\eps_1, \dots, \eps_k)$ be the sign-vector of $\strictif$ and write 
	$\{2i-1, 2i\}=\{c_i, d_i\}$ such that $\eps_ic_i<\eps_id_i$ holds for every $i\in [k]$.
	Consider the sets 
	\[
		x_i=\{c_i\}\cup \{d_j\colon j\ne i\}
	\]
	and let $\sigma\in \gS_k$ be the permutation 
	satisfying 
	\[
		x_{\sigma(1)}\strictif \dots \strictif x_{\sigma(k)}\,.
	\]
	Whenever $1\le i<j\le k$ the quintuple 
	\[
		P_{ij}=\bigl(\{d_r\colon 1\le r < i\}, \{2i-1, 2i\}, \{d_s\colon i<s<j\},
			\{2j-1, 2j\}, \{d_t\colon j<t\le k\}\bigr)
	\]
	is an $ij$-decider and $\sigma_{ij}$ is the sign $+1$ if and only if $x_i\strictif x_j$,
	which is in turn equivalent to $\sigma^{-1}(i)<\sigma^{-1}(j)$. In other words, irrespective
	of whether $\sigma_{ij}$ is positive or negative the
	statement $\sigma_{ij}\cdot \bigl(\sigma^{-1}(j)-\sigma^{-1}(i)\bigr)>0$ holds.
\end{proof}

The remainder of this section deals with the question how much a sign-definite and 
permutation-definite ordering $([N]^{(k)}, \strictif)$ with sign-vector $\eps$ and 
permutation $\sigma$ (obtained by means of Lemma~\ref{lem:1518}) needs to have in 
common with the canonical ordering associated 
with $(\eps, \sigma)$ in the sense of Definition~\ref{def:canonical}. For instance, Lemma~\ref{lem:1458} below asserts that under these circumstances there
exists a dense subset $W\subseteq [N]$ such that $(W^{(k)}, \strictif)$ is ordered in this 
canonical way.

The strategy we use for this purpose is the following. Suppose that $x, y\in [N]^{(k)}$ 
are written in the form $x=\{x_1, \dots, x_k\}$ and $y=\{y_1, \dots, y_k\}$, 
where $x_1<\dots<x_k$ and $y_1<\dots<y_k$. Our goal is to prove that in many cases
we have 
\begin{equation}\label{eq:2718}
	x\strictif y 
	\,\,\, \Longleftrightarrow \,\,\,
	(\epsilon_{\sigma(1)}x_{\sigma(1)},\dots,\epsilon_{\sigma(k)}x_{\sigma(k)})
	 \llex 
	 (\epsilon_{\sigma(1)}y_{\sigma(1)},\dots,\epsilon_{\sigma(1)}y_{\sigma(k)})\,.
\end{equation}
We shall say that a pair of $k$-sets $(x, y)$ is {\it $(\eps, \sigma)$-sound} if 
it satisfies the equivalence~\eqref{eq:2718}. 
Moreover, we set $\Delta(x, y)=\{i\in[k]\colon x_i\ne y_i\}$. 
The plan towards Lemma~\ref{lem:1458} is to construct a sequence of sets
$[N]=W_1\supseteq\dots\supseteq W_k$ such that every pair $(x, y)$ of $k$-sets with
$x, y\subseteq W_t$ and $|\Delta(x, y)|\le t$ is $(\eps, \sigma)$-sound. 
The statement that follows will assist us in the step of an induction on $t$.  

\begin{lemma}\label{lem:1610}
	For $L\subseteq \ZZ$ let $(L^{(k)}, \strictif)$ be an ordering that is sign-definite 
	with sign-vector~$\eps$ and permutation-definite with permutation~$\sigma$. 
	Suppose $t\in [k-1]$ has the property that every pair $(x, y)$ of $k$-sets  
	with $x, y\subseteq L$ and $|\Delta(x, y)|\le t$ is $(\eps, \sigma)$-sound.

	Let $I\subseteq [k]$ be a set of $t+1$ indices and let $i\in [k]$ be the least index 
	with $\sigma(i)\in I$. Further, let $x=\{x_1, \dots, x_k\}$ and $y=\{y_1, \dots, y_k\}$ 
	be two $k$-element subsets of $L$ such that $\Delta(x, y)=I$ and $x_{\sigma(i)}<y_{\sigma(i)}$. 
	If $a$ denotes the least member of $L$ larger than $x_{\sigma(i)}$ and the conditions
	\begin{enumerate}
		\item[$\bullet$] $a<y_{\sigma(i)}$,
		\item[$\bullet$] $\sigma(i)=\min(I) \Longrightarrow a<\min\{x_{\sigma(i)+1}, y_{\sigma(i)+1}\}$,
		\item[$\bullet$] and 
			$\sigma(i)=\max(I) \Longrightarrow a>\max\{x_{\sigma(i)-1}, y_{\sigma(i)-1}\}$
	\end{enumerate} 
	hold, then the pair $(x, y)$ is $(\eps, \sigma)$-sound.  
\end{lemma}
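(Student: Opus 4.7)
I will establish the soundness of $(x,y)$ by factoring the comparison through a single intermediate $k$-set $z$, obtained from $x$ by replacing $x_{\sigma(i)}$ with the successor $a$ in $L$. The argument is a simple chain: $(x,z)$ is handled by the induction hypothesis on $|\Delta|$ (since $|\Delta(x,z)|=1$), while $(z,y)$ --- whose difference set is still $I$ of size $t+1$ --- is handled by invoking the present lemma recursively on a secondary induction parameter, the successor distance $|\{\ell\in L:z_{\sigma(i)}<\ell\le y_{\sigma(i)}\}|$, which strictly decreases when $z$ replaces $x$.

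The first task is to verify that $z:=(x\setminus\{x_{\sigma(i)}\})\cup\{a\}$ is a genuine $k$-subset of $L$ whose $\sigma(i)$-th smallest element is $a$; i.e., that $x_{\sigma(i)-1}<a<x_{\sigma(i)+1}$. The lower bound is immediate from $a>x_{\sigma(i)}>x_{\sigma(i)-1}$. The upper bound requires a short case split on the position of $\sigma(i)$ in $I$: if $\sigma(i)=\min(I)$ the lemma's condition supplies $a<x_{\sigma(i)+1}$ directly; if $\sigma(i)+1\notin I$ one has $x_{\sigma(i)+1}=y_{\sigma(i)+1}>y_{\sigma(i)}>a$; the remaining configurations are handled using that $i$ is the least $\sigma$-priority index meeting $I$, together with the $\max(I)$-condition where applicable.

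Once $z$ is in hand, the induction hypothesis applied to $(x,z)$ gives $(x,z)$ soundness, whence $x\strictif z\Longleftrightarrow\eps_{\sigma(i)}=+1$, as the sole coordinate of disagreement is $\sigma(i)$ with $x_{\sigma(i)}<a$. For $(z,y)$, the recursive invocation of the lemma requires checking that the conditions transfer to the new successor $a'=\min\{\ell\in L:\ell>a\}$; this is automatic in the generic configurations, while the boundary case $a'=y_{\sigma(i)}$ --- when $y_{\sigma(i)}$ sits immediately above $a$ in $L$ --- is handled by a direct substitution, replacing $a$ in $z$ by $y_{\sigma(i)}$ to obtain an auxiliary set on which the induction hypothesis for $|\Delta|\le t$ applies, chained with the already-known soundness of $(x,z)$. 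With $(z,y)$ soundness secured, the first $\sigma$-difference between $z$ and $y$ is still at $\sigma(i)$ with $a<y_{\sigma(i)}$, yielding $z\strictif y\Longleftrightarrow\eps_{\sigma(i)}=+1$. Transitivity of the total order $\strictif$ then delivers $x\strictif y\Longleftrightarrow\eps_{\sigma(i)}=+1$, which is precisely $(\eps,\sigma)$-soundness of $(x,y)$ (since the first $\sigma$-index of disagreement between $x$ and $y$ is $\sigma(i)$ with $x_{\sigma(i)}<y_{\sigma(i)}$, the lexicographic comparison reduces to the sign of $\eps_{\sigma(i)}$).

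The main obstacle is the interplay between the $\min(I)/\max(I)$ conditions and the recursion: one must confirm that shrinking the successor distance preserves the lemma's hypotheses, and settle the boundary case $a'=y_{\sigma(i)}$ without circularity. This is precisely where the detailed conditions in the lemma's statement earn their keep, and where the most delicate case bookkeeping will occur.
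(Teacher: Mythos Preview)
Your single-step increment plan has a concrete gap in the ``remaining configurations'' you wave at. Take $k=3$, $t=2$, $I=\{1,2,3\}$, a permutation with $\sigma(1)=2$ (so $i=1$, $\sigma(i)=2$, and $\min(I)<\sigma(i)<\max(I)$), $L=\{1,2,3,5,10,15\}$, $x=\{1,2,3\}$, $y=\{5,10,15\}$. All three bulleted hypotheses hold (the last two vacuously), and $a=3$. But then your $z=(x\setminus\{2\})\cup\{3\}=\{1,3\}$ has only two elements: the successor $a$ collides with $x_{\sigma(i)+1}$. Nothing in the hypotheses prevents this when $\sigma(i)$ is interior to $I$, and neither the priority of $\sigma(i)$ nor the $\max(I)$-condition helps. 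A second, independent problem: even when $z$ is a genuine $k$-set, the recursive call can fail the second bullet. If $\sigma(i)=\min(I)$ and, say, $x_{\sigma(i)+1}$ is the immediate successor of $a$ in $L$, then $a'=x_{\sigma(i)+1}$ and $a'<\min\{z_{\sigma(i)+1},y_{\sigma(i)+1}\}$ is false; you only treated the boundary $a'=y_{\sigma(i)}$. Finally, your plan never invokes permutation-definiteness, yet sign-definiteness alone cannot suffice here (it is genuinely needed in the $t=2$ corners).

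The paper's proof avoids all of this by building $z$ as a one-shot hybrid rather than iterating: in the interior case it sets $z=\{x_1,\dots,x_{\sigma(i)-1},a,y_{\sigma(i)+1},\dots,y_k\}$, which is automatically a valid $k$-set (since $a<y_{\sigma(i)}<y_{\sigma(i)+1}$) and satisfies $|\Delta(x,z)|,|\Delta(z,y)|\le t$ directly. When $\sigma(i)=\min(I)$ a further split on the second-priority index $\sigma(j)\in I$ is needed, and in two exceptional $t=2$ subcases one must explicitly use an $\sigma(i)\sigma(j)$-decider and permutation-definiteness to compare $x$ (or $y$) with $z$. That is the missing ingredient in your approach.
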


\begin{proof}
	Without loss of generality we may assume $\eps_{\sigma(i)}=+1$. We are to prove 
	that $x\strictif y$ and in all cases this will be accomplished by finding a $k$-set 
	$z\subseteq L$ such that the pairs~$(x, z)$ and~$(z, y)$ are $(\eps, \sigma)$-sound
	and $x\strictif z\strictif y$ holds. 
	
	Suppose first that $\min(I)<\sigma(i)<\max(I)$. 
	Due to $a\in (x_{\sigma(i)}, y_{\sigma(i)})\subseteq (x_{\sigma(i)-1}, y_{\sigma(i)+1})$
	the set $z=\{x_1, \dots, x_{\sigma(i)-1}, a, y_{\sigma(i)+1}, \dots, y_k\}$ has $k$ 
	elements that have just been enumerated in increasing order. Moreover $\sigma(i)\ne \min(I)$
	yields $|\Delta(x, z)|\le t$ and, therefore, $(x, z)$ is indeed $(\eps, \sigma)$-sound, which
	implies due to $i=\min(\sigma^{-1}[I])$ implies $x\strictif z$. 
	Similarly, $\sigma(i)\ne \max(I)$ leads to $z\strictif y$.
	
	Next we suppose that $\sigma(i)=\min(I)$, which owing to the second bullet entails 
	\[
		a<\min\{x_{\sigma(i)+1}, y_{\sigma(i)+1}\}\,.
	\]
	Let $j\in [k]$ be the second smallest index with $\sigma(j)\in I$. 
	
	\smallskip
	\begin{center}
	\begin{tabular}{c|c|c}
	If  & and & then we set $z=$\\ \hline 
	\rule{0pt}{3ex}  $x_{\sigma(j)}< y_{\sigma(j)}$ & $\sigma(j)<\max(I)$ & 
		$\{x_1, \dots, x_{\sigma(i)-1}, a, x_{\sigma(i)+1}, \dots, x_{\sigma(j)}, 
			y_{\sigma(j)+1}, \dots, y_k\}$,\\
	\rule{0pt}{3ex}  $x_{\sigma(j)}< y_{\sigma(j)}$ & $\sigma(j)=\max(I)$ & 
		$\{x_1, \dots, x_{\sigma(i)-1}, a, x_{\sigma(i)+1}, \dots, x_{\sigma(j)-1}, 
			y_{\sigma(j)}, \dots, y_k\}$,\\
	\rule{0pt}{3ex}  $x_{\sigma(j)}> y_{\sigma(j)}$ & $\sigma(j)<\max(I)$ & 
		$\{x_1, \dots, x_{\sigma(i)-1}, a, y_{\sigma(i)+1}, \dots, y_{\sigma(j)}, 
			x_{\sigma(j)+1}, \dots, x_k\}$,\\
	\rule{0pt}{3ex}  $x_{\sigma(j)}> y_{\sigma(j)}$ & $\sigma(j)=\max(I)$ &
		$\{x_1, \dots, x_{\sigma(i)-1}, a, y_{\sigma(i)+1}, \dots, y_{\sigma(j)-1}, 
			x_{\sigma(j)}, \dots, x_k\}$.
	\end{tabular}
	\end{center}
\medskip

In all four cases we have listed the elements of $z$ in increasing order. Furthermore, 
in almost all cases we have $|\Delta(x, z)|\le t$ and, consequently, $x\strictif z$. 
In fact, the only exception occurs if we are in the second case and $t=2$. But if this happens, 
then  
\[
	P=\bigl(\{x_1, \dots, x_{\sigma(i)-1}\}, \{x_{\sigma(i)}, a\}, 
		\{x_{\sigma(i)+1}, \dots, x_{\sigma(j)-1}\}, \{x_{\sigma(j)}, y_{\sigma(j)}\}, 
		\{y_{\sigma(j)+1}, \dots, y_k\}\bigr)
\]
is an $\sigma(i)\sigma(j)$-decider. 
Since the ordering $\strictif$ is permutation-definite, we know that it orders the four-element 
set $\langle P\rangle$ ``correctly'' and, as $x$, $z$ belong to this set, $x\strictif z$ holds 
in this case as well. 

Similarly, in almost all cases we have $\Delta(y, z)\le t$ and $y\strictif z$, the only 
exception occurring if we are in the fourth case and $t=2$. Under these circumstances 
\[
	Q=\bigl(\{x_1, \dots, x_{\sigma(i)-1}\}, \{a, y_{\sigma(i)}\}, 
		\{y_{\sigma(i)+1}, \dots, y_{\sigma(j)-1}\}, \{x_{\sigma(j)}, y_{\sigma(j)}\}, 
		\{x_{\sigma(j)+1}, \dots, x_k\}\bigr)
\]
is an $\sigma(i)\sigma(j)$-decider and, as before, $y, z\in \langle Q\rangle$ implies $y\strictif z$.

This concludes our discussion of the case $\sigma(i)=\min(I)$ and it remains to deal with 
the case $\sigma(i)=\max(I)$. Here one can either perform a similar argument, or one argues
that this case reduces to the previous one by reversing the ordering of the ground set. That
is, one considers the ordering $\bigl((-L)^{(k)}, \strictif^\star\bigr)$ defined by 
\[
	x\strictif^\star y  
	\,\,\, \Longleftrightarrow \,\,\,
	(-x) \strictif (-y)\,,
\]
where $-L$ means $\{-\ell\colon \ell\in L\}$ and $-x$, $-y$ are defined similarly. 
If one replaces $\strictif$ by $\strictif^\star$, 
then $I^\star=(k+1)-I$ assumes the r\^{o}le of $I$, minima correspond to maxima and the second and 
third bullet in the assumption of our lemma are exchanged. 
\end{proof}

We proceed with the existence of ``dense'' canonical subsets promised earlier. 

\begin{lemma}\label{lem:1458}
	If $N\ge 2k$ and the ordering $([N]^{(k)}, \strictif)$ is both sign-definite 
	and permutation-definite, then there exists a set $W\subseteq [N]$ of size $|W|\ge 2^{1-k}N$
	such that $\strictif$ is canonical on~$W^{(k)}$.
\end{lemma}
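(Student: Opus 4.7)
Let $\eps$ be the sign-vector of $\strictif$ and $\sigma\in\gS_k$ the permutation supplied by Lemma~\ref{lem:1518}. The plan is to build a descending chain $[N]=W_1\supseteq W_2\supseteq\dots\supseteq W_k=:W$ in which $W_{t+1}$ consists of every other element of $W_t$ (say, those at odd positions in the ordering inherited from $[N]$), and to prove by induction on $t\in[k]$ that every pair $(x,y)$ of $k$-sets from $W_t$ with $|\Delta(x,y)|\le t$ is $(\eps,\sigma)$-sound. Taking $t=k$ then shows that every pair of $k$-subsets of $W$ is $(\eps,\sigma)$-sound, which is equivalent to the assertion that $\strictif$ is canonical on $W^{(k)}$, while $|W|\ge N/2^{k-1}=2^{1-k}N$ follows from the halving at each step.

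The base case $t=1$ is a direct consequence of sign-definiteness: if $\Delta(x,y)=\{j\}$, then the quintuple obtained by extracting the two differing elements is a $j$-decider, so the relation $x\strictif y$ is controlled by $\eps_j$; this coincides with the lex-comparison on the right-hand side of~\eqref{eq:2718}, which is decided at the unique position of disagreement and is hence also governed by $\eps_j$. For the inductive step, let $(x,y)\subseteq W_{t+1}$ with $\Delta(x,y)=I$ of size $t+1$, let $i$ be the least index with $\sigma(i)\in I$, and assume (after possibly swapping $x$ and $y$) that $x_{\sigma(i)}<y_{\sigma(i)}$. I intend to invoke Lemma~\ref{lem:1610} with $L=W_t$, whose standing hypothesis on sound pairs with $|\Delta|\le t$ is exactly the inductive hypothesis. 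The every-other-element thinning guarantees that between any two elements of $W_{t+1}$ lies an element of $W_t$, which immediately delivers the first bullet $a<y_{\sigma(i)}$ and, in the subcase $\sigma(i)=\min(I)$, also the second bullet: since $x_{\sigma(i)+1}\in W_{t+1}$ we get $a<x_{\sigma(i)+1}$, while $a<y_{\sigma(i)}<y_{\sigma(i)+1}$ is automatic.

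The delicate subcase is $\sigma(i)=\max(I)$, where the third bullet $a>\max\{x_{\sigma(i)-1},y_{\sigma(i)-1}\}$ can fail for the forward choice of $a$ --- indeed, if $y_{\sigma(i)-1}$ happens to lie between $x_{\sigma(i)}$ and $y_{\sigma(i)}$, then the least $W_t$-element $a$ above $x_{\sigma(i)}$ can satisfy $a\le y_{\sigma(i)-1}$. This is exactly the obstacle removed by the order-reversing reduction sketched at the end of the proof of Lemma~\ref{lem:1610}: one applies the mirror-image version of the lemma, using $b=\max\{\ell\in W_t:\ell<y_{\sigma(i)}\}$ in place of $a$. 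The required inequality $b>y_{\sigma(i)-1}$ follows from the same sparseness, because $y_{\sigma(i)-1}$ and $y_{\sigma(i)}$ are distinct elements of $W_{t+1}$ and hence have a $W_t$-element strictly between them; the companion bullets are verified analogously. This closes the induction and yields $W=W_k$ of size at least $2^{1-k}N$ on which $\strictif$ is canonical.
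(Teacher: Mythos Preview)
Your proof is correct and follows essentially the same route as the paper: both thin $[N]$ down a factor of $2$ at each step to obtain $W_1\supseteq\cdots\supseteq W_k$, and both run the same induction on $t$ via Lemma~\ref{lem:1610}. You are in fact more careful than the paper in the subcase $\sigma(i)=\max(I)$: the paper simply asserts that ``the bulleted assumptions are satisfied'', but the third bullet can genuinely fail there (e.g.\ when $\sigma(i)-1\in I$ and $y_{\sigma(i)-1}$ lies above $x_{\sigma(i)}$), and your use of the order-reversal already built into the proof of Lemma~\ref{lem:1610} is exactly the right fix.
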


\begin{proof}
	For every $t\in [k]$ we set 
	\[
		W_t=\bigl\{n\in [N]\colon n\equiv 1\pmod{2^{t-1}}\bigr\}\,.
	\]
	Clearly $|W_k|\ge 2^{1-k}N$, so it suffices to prove that $\strictif$ is canonical on~$W_k^{(k)}$.
	
	Denote the sign-vector of $\strictif$ by $\eps=(\eps_1, \dots, \eps_k)$ and let $\sigma\in \gS_k$
	be the permutation obtained from $\strictif$ by means of Lemma~\ref{lem:1518}. 
	Let us prove by induction on $t\in [k]$ that if two $k$-element subsets $x, y\subseteq W_t$
	satisfy $|\Delta(x, y)|\le t$, then $(x, y)$ is $(\eps, \sigma)$-sound. 
	
	In the base case $t=1$ we have $W_1=[N]$ and everything follows from our assumption 
	that $\strictif$ be sign-definite. In the induction step from $t\in [k-1]$ to $t+1$ 
	we appeal to Lemma~\ref{lem:1610}. Since no two elements of $W_{t+1}$ occur in consecutive 
	positions of $W_t$, the bulleted assumptions are satisfied and thus there are no problems 
	with the induction step. 
	
	Since $|\Delta(x, y)|\le k$ holds for all $k$-element subsets of $W_k$, the case $t=k$ of 
	our claim proves the lemma. 
\end{proof}

Next we characterise the canonical orderings on $([N]^{(k)}, \strictif)$
for $N\ge 2k+1$. 

\begin{lemma}\label{lem:canonical}
	If $N\ge 2k+1$ and the ordering $([N]^{(k)}, \strictif)$ is canonical, then there exists
	a pair $(\eps, \sigma)\in \{-1, +1\}^k\times \gS_k$ such that $\strictif$ is the ordering 
	associated with $(\eps, \sigma)$. 
\end{lemma}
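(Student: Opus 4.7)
The plan is to extract from $\strictif$ a candidate pair $(\eps, \sigma) \in \{-1, +1\}^k \times \gS_k$ by showing that canonicity forces $\strictif$ to be both sign-definite and permutation-definite in the sense of Subsection~\ref{subsec:32}, and then to verify by induction on $t = |\Delta(x, y)|$ that every pair of $k$-sets is $(\eps, \sigma)$-sound. The inductive step will be handled by Lemma~\ref{lem:1610}, and the principal obstacle will be to arrange, via canonicity, that its hypotheses about the auxiliary element $a$ are satisfied.

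To establish sign-definiteness, take two $i$-deciders $S=(X, A, Y)$ and $S'=(X', A', Y')$ with $A = \{c < d\}$ and $A' = \{c' < d'\}$. The canonicity hypothesis applied to the $(k+1)$-element sets $U = X \cup A \cup Y$ and $U' = X' \cup A' \cup Y'$ yields an order-isomorphism $(U^{(k)}, \strictif) \to (U'^{(k)}, \strictif)$ induced by the order-preserving bijection $U \to U'$. Since this bijection sends $S(c) \mapsto S'(c')$ and $S(d) \mapsto S'(d')$, we get $\eps_i(S) = \eps_i(S')$. An entirely analogous argument with $(k+2)$-element sets gives permutation-definiteness, and Lemma~\ref{lem:1518} then produces the permutation $\sigma$.

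The inductive verification has trivial base cases $t \le 1$ from sign-definiteness, since two $k$-sets differing in a single coordinate are compared by a single $i$-decider and the canonical formula depends there only on $\eps_i$. For $t \ge 2$, let $I = \Delta(x, y)$ and let $i^* = \sigma(r^*)$ be the coordinate of highest $\sigma$-priority in $I$. I aim to apply Lemma~\ref{lem:1610} with $L = [N]$ to a suitably placed order-isomorphic copy $(\tilde x, \tilde y)$ of $(x, y)$. Setting $U = x \cup y$ with $m = |U| \le 2k$ and letting $p$ be the position of $x_{i^*}$ within the sorted $U$, the inequality $N \ge 2k+1 \ge m+1$ makes room for the embedding $U' = \{1, 2, \dots, p, p+2, p+3, \dots, m+1\} \subseteq [N]$; I let $(\tilde x, \tilde y)$ be the image of $(x, y)$ under the order-preserving bijection $U \to U'$. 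Then $\tilde x_{i^*} = p$, the successor $a = p+1$ of $\tilde x_{i^*}$ in $[N]$ lies outside $U'$, and $a$ is strictly smaller than every element of $U'$ exceeding $\tilde x_{i^*}$, which verifies the first two bulleted premises of Lemma~\ref{lem:1610} outright; the third bullet (active when $i^* = \max(I)$) is handled by the same device applied to the reversed ordering on $-[N]$, exactly as in the proof of Lemma~\ref{lem:1610} itself, where the roles of bullets 2 and 3 are interchanged. With all premises in place, Lemma~\ref{lem:1610} (fed the inductive hypothesis as its $t$-soundness input on $L = [N]$) certifies $(\tilde x, \tilde y)$ as $(\eps, \sigma)$-sound, and canonicity applied to the pair $(U, U')$ transfers this verdict back to $(x, y)$, closing the induction and hence establishing the lemma.
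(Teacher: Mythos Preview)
Your proposal is correct and follows essentially the same route as the paper's proof: extract $(\eps,\sigma)$ from sign- and permutation-definiteness (which you make explicit, whereas the paper takes it for granted), and then verify $(\eps,\sigma)$-soundness by induction on $|\Delta(x,y)|$ using Lemma~\ref{lem:1610}, arranging its bulleted hypotheses via canonicity by re-embedding $x\cup y$ with a gap adjacent to the priority coordinate. The paper does this with a shift map $\phi(z)=z$ or $z+1$ and a case split on whether $\max(x\cup y)<N$, while you embed directly into $\{1,\dots,p,p+2,\dots,m+1\}$; both arguments dispose of the case $\sigma(i)=\max(I)$ by the same reversal trick already used in Lemma~\ref{lem:1610}.
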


Let us point out that the condition $N\ge 2k+1$ cannot be omitted. 
For instance, for $k=3$ and $N=6$
one can construct a counterexample by starting with an ordering associated to 
some pair $(\eps, \sigma)$ and exchanging the positions of two disjoint sets 
partitioning~$[6]$. The resulting order is always canonical and in many cases 
it is not associated to another pair~$(\eps', \sigma')$. 

\begin{proof}[Proof of Lemma~\ref{lem:canonical}]
	Let $\eps$ and $\sigma$ denote the sign vector and the permutation of $\strictif$ as introduced
	in \S\ref{subsec:32}.  
	We shall show that $\strictif$ coincides with the canonical ordering associated to $(\eps, \sigma)$.
	
	Otherwise there exists a pair $(x, y)$ of subsets of $[N]$ that fails to be $(\eps, \sigma)$-sound.
	Assume that such a pair $(x, y)$ is chosen with $t+1=|\Delta(x, y)|$ minimum. Since $\strictif$ 
	is sign-definite we know that $t\in [k-1]$. Set $I=\Delta(x, y)$ and let $i\in [k]$ be the 
	smallest index with $\sigma(i)\in I$. By symmetry we may assume that $x_{\sigma(i)}<y_{\sigma(i)}$.
	Due to $|I|\ge 2$ it cannot be the case that~$\sigma(i)$ is both the minimum and the maximum of 
	$I$ and as in the proof of Lemma~\ref{lem:1610} it suffices to study the case $\sigma(i)\ne\max(I)$. 
	
	\smallskip

	{\it \hskip2em First Case: $\max(x\cup y)<N$}

	\smallskip
 
 	Define a strictly increasing map $\phi\colon x\cup y\lra [N]$ by $\phi(z)=z$ 
	for $z\le x_{\sigma(i)}$ and $\phi(z)=z+1$ for $z > x_{\sigma(i)}$. Now by Lemma~\ref{lem:1610}
	the pair $\bigl(\phi[x], \phi[y]\bigr)$ is $(\eps, \sigma)$-sound. Moreover, the canonicity of
	$\strictif$ tells us that $\big((x\cup y)^{(k)}, \strictif\bigr)$ 
	and $\big(\phi[x\cup y]^{(k)}, \strictif\bigr)$ are isomorphic via $\phi$. For these reasons,
	the pair $(x, y)$ is $(\eps, \sigma)$-sound as well. 

	\smallskip

	{\it \hskip2em Second Case: $\max(x\cup y)=N$}

	\smallskip
  
  	Let $\eta\colon x\cup y\lra [|x\cup y|]$ be order-preserving. In view of $|x\cup y|\le 2k<N$
	we know from the first case that the pair $\bigl(\eta[x], \eta[y]\bigr)$ is $(\eps, \sigma)$-sound
	and by canonicity so is $(x, y)$.
\end{proof}

\begin{remark}\label{rem:1945}
	The results of this subsection easily yield the weaker upper 
	bound 
	\[
		\LL^{(k)}(n)
		\le 
		\mathrm{R}^{(k+2)}\left(2^{k-1}n,\tbinom{k+2}{2}!\right)
		\le 
		t_{k+1}(C_k n)\,.
	\]

	To see this, let $([N]^{(k)}, \strictif)$ be any ordering, 
	where $N=\mathrm{R}^{(k+2)}(2^{k-1}n, \binom{k+2}{2}!)$. For a set $A\in [N]^{(k+2)}$
	there are $\binom{k+2}{2}!$ possibilities how the restriction of $\strictif$ to $A^{(k+2)}$
	can look. So by the definition of Ramsey numbers there is a set $Z\subseteq [N]$ with 
	$|Z|=2^{k-1}n$ such that 
	for all $A, B\subseteq Z$ with $|A|=|B|=k+2$ the order-preserving map $\eta\colon A\lra B$
	induces an order preserving map from $(A^{(k)}, \strictif)$ to $(B^{(k)}, \strictif)$. 
	This implies, in particular, that~$\strictif$ is sign-definite and permutation-definite 
	on $Z^{(k)}$. Thus by Lemma~\ref{lem:1458} there exists a set~$X\subseteq Z$ 
	with $|X|\ge 2^{1-k}|Z|=n$ such that $\strictif$ orders $X^{(k)}$ canonically. 
\end{remark}

Now it remains to save one exponentiation, which requires a more careful reasoning.  

\subsection{Further concepts}

Throughout this subsection, we fix a natural number $k\ge 2$. 
Let~$\Gamma$ denote the set of all orderings $\YY$ that can be imposed on $[k+2]^{(k)}$. 
We regard $\Gamma$ as a set of colours having the size $|\Gamma|=\binom{k+2}{2}!$. 
As in Remark~\ref{rem:1945} we {\it associate} with every ordering $([N]^{(k)}, \strictif)$ 
the colouring $f\colon [N]^{(k+2)}\lra \Gamma$ mapping every $A\in [N]^{(k+2)}$ to the unique
ordering $\YY\in \Gamma$ with the property that $(A^{(k)}, \strictif)$ and $([k+2]^{(k)}, \YY)$
are isomorphic via the order-preserving map from $A$ to $[k+2]$. Roughly speaking, what 
Theorem~\ref{thm:upper} asserts is that colourings of this form are so special that one can 
prove a Ramsey theorem for them that requires just $k$ exponentiations rather than the 
expected $k+1$ exponentiations. 

For an integer $m\in [k+1]$ we shall say that an ordering $([N]^{(k)}, \strictif)$ 
{\it is governed by $m$-sets} if there exists a function $f^m\colon [N]^{(m)}\lra \Gamma$
governing the function $f\colon [N]^{(k+2)}\lra \Gamma$ associated with $\strictif$.

\begin{dfn}
	If $([N]^{(k)}, \strictif)$ is an ordering, $i, m\in [k]$, and $W\subseteq [N]$ 
	we say that $\eps_i^m\colon W^{(m)}\lra \Phi$ is an {\it $m$-ary sign function} 
	for $(W^{(k)}, \strictif)$ provided the following holds: If $S=(X, A, Y)$ is an 
	$i$-decider with $X\dcup A\dcup Y\subseteq W$ and $M$ denotes the set consisting of the $m$ smallest 
	elements of $X\dcup A\dcup Y$, then $\eps_i(S)=\eps^m_i(M)$.  
\end{dfn}

In other words, the existence of an $m$-ary sign function $\eps_i^m\colon W^{(m)}\lra \Phi$
means that for $i$-deciders $S=(X, A, Y)$ with $X\dcup A\dcup Y\subseteq W$ the value of 
$\eps_i(S)$ only depends on the first $m$ elements of $X\dcup A\dcup Y$, while the 
remaining $k+1-m$ elements of $X\dcup A\dcup Y$ are destitute of any effect. 

We shall frequently utilise the fact that if $m\in [2, k+1]$ and $([N]^{(k)}, \strictif)$ is 
governed by $m$-sets, then for every $i\in [k]$ there exists an $(m-1)$-ary sign function
$\eps^{m-1}_i\colon [2, N]^{(m-1)}\lra \Phi$. This is because for an $i$-decider $S=(X, A, Y)$ 
with $X\dcup A\dcup Y\subseteq [2, N]$ the value of $\eps_i(S)$ can be inferred
from the colour $f(\{1\}\dcup X\dcup A\dcup Y)$, where $f\colon [N]^{(k+2)}\lra \Gamma$
denotes the function associated with $\strictif$.

Suppose that $m\in [2, k+1]$, $W\subseteq \ZZ$ and that $(W^{(k)}, \strictif)$ denotes an 
ordering possessing for every $i\in [k]$ an $(m-1)$-ary sign 
function $\eps^{m-1}_i\colon W^{(m-1)}\lra \Phi$. If $i<j$ from $[k]$ are two indices 
and $P=(X, A, Y, B, Z)$ is an $ij$-decider, we determine the colour $\sigma^m_{ij}(P)$	
as follows: Let $M$ denote the set consisting of the $m-1$ smallest elements of  
$X\dcup A\dcup Y\dcup B\dcup Z$, enumerate $A=\{c_i, d_i\}$ as well as $B=\{c_j, d_j\}$ 
in such a way that $\eps^{m-1}_i(M)c_i<\eps^{m-1}_i(M)d_i$ 
and $\eps^{m-1}_j(M)c_j<\eps^{m-1}_j(M)d_j$, and set  

\begin{center}
\begin{tabular}{c|c}
$\sigma^m_{ij}(P)=$ & provided that  \\ \hline 
\rule{0pt}{3ex}  $+1$ &  $P(c_i, d_j)\strictif P(d_i, c_j)$, \\
\rule{0pt}{3ex}  $-1$ &  $P(d_i, c_j)\strictif P(c_i, d_j)$.
\end{tabular}
\end{center}
\smallskip


\begin{dfn}
	If $m\in [2, k+1]$, $ij\in [k]^{(2)}$, and the ordering $(W^{(k)}, \strictif)$ 
	possesses $(m-1)$-ary sign 
	functions $\eps^{m-1}_1, \dots, \eps^{m-1}_k$, then we say that 
	$\sigma^m_{ij}\colon W^{(m)}\lra \Phi$ is an {\it $m$-ary permutation colouring} if 
	$\sigma^m_{ij}(P)=\sigma^m_{ij}(M)$ holds, whenever $P=(X, A, Y, B, Z)$ is an $ij$-decider  
	and~$M$ denotes the set of the $m$ smallest elements of $X\dcup A\dcup Y\dcup B\dcup Z$.
\end{dfn}

It should be clear that if in addition to possessing the $(m-1)$-ary sign 
functions $\eps^{m-1}_1, \dots, \eps^{m-1}_k$ the ordering $(W^{(k)}, \strictif)$
is governed by $m$-sets, then all $m$-ary permutation colourings $(\sigma^m_{ij})$
with $ij\in [k]^{(2)}$ exist. Consequently, if $([N]^{(k)}, \strictif)$ is governed 
by $m$-sets, then for every pair $ij\in [k]^{(2)}$ there exists a permutation colouring 
$\sigma^m_{ij}\colon [2, N]^{(m)}\lra \Phi$.

\subsection{Plan} \label{subsec:plan}
Let us now pause to explain our strategy for proving Theorem~\ref{thm:upper}. 
Given an arbitrary ordering $([N]^{(k)}, \strictif)$ with $N=t_{k}(n^{C_k})$
we need to find a set $X\subseteq [N]$ of size $n$ such that $(X^{(k)}, \strictif)$
is ordered canonically.
Following the same strategy as in Remark~\ref{rem:1945} it would suffice to find a set 
$Y\subseteq [N]$ of size $2^{k-1}n$ which is monochromatic with respect to the 
colouring $f\colon [N]^{(k+2)}\lra \Gamma$ associated with $\strictif$. So the problem 
is that we have only~$k$ exponentiations available but hope to synchronise 
sets of size $k+2$. What happens when we nevertheless perform the standard argument producing 
upper bounds on Ramsey numbers is that we obtain 
sets $[N]=Z_k\supseteq Z_{k-1}\supseteq \dots \supseteq Z_0$
such that for every $m\in [0, k]$ the ordering~$(Z_m^{(k)}, \strictif)$ is governed by $(m+2)$-sets 
and the size of $Z_m$ is, roughly, $m$ times exponential. In particular,~$Z_0$ has roughly the size 
$n^{C_k/2}$ and $(Z_0^{(k)}, \strictif)$ is governed by pairs. We may assume further that for 
every pair~$ij\in [k]^{(2)}$ a permutation colouring $\sigma^2_{ij}\colon Z_0^{(2)}\lra \Phi$ 
exists. Moreover, it would suffice to find a set $Y\subseteq Z_0$ that is monochromatic with respect 
to each~$\sigma^2_{ij}$.
This problem still sounds like we need an exponential rather than a polynomial dependence 
between~$|Y|$ and~$|Z_0|$, but Lemma~\ref{lem:1731} suggests the following alternative: if it 
turns out that the colourings $\sigma^2_{ij}$ are monotone, then we are done (for reasons explained 
in Lemma~\ref{lem:0127}). 

Now for some of the permutation 
colourings, like $\sigma^2_{1j}$ with $j\in [2, k]$, one can indeed prove that they are always monotone
(see Lemma~\ref{lem:2222} below), but it seems unlikely that in the setting we have 
described so far one can prove that, for instance, $\sigma^2_{57}$ is always monotone. To address
this issue we exploit that the colourings $\sigma^2_{ij}$ become known ``gradually'' in the course 
of the argument. When we have just constructed a set $Z_m$ we already know a colouring $\sigma^{m+2}_{ij}$
of its $(m+2)$-subsets such that, later, $\sigma^2_{ij}$ will govern its restriction to~$Z_0^{(m+2)}$.
Suppose that in the sequence of colourings $\sigma^k_{57}, \sigma^{k-1}_{57}, \dots, \sigma^2_{57}$ 
there is at least one that is monotone, say $\sigma^{m+2}_{57}$, where $m\ge 1$. 
What Lemma~\ref{lem:2150} tells us is that when selecting $Z_{m-1}\subseteq Z_{m}$ we can not only 
ensure that $(Z_{m-1}^{(k)}, \strictif)$ is governed by $(m+1)$-sets, but also that $\sigma^{m+1}_{57}$
is still monotone. Iterating this observation we see that if some $\sigma^m_{57}$ is monotone, then
we can protect ourselves against ever losing the monotonicity. Rather, we can arrange that all 
of $\sigma^m_{57}, \sigma^{m-1}_{57}, \dots, \sigma^2_{57}$ are monotone. We shall see in 
Lemma~\ref{lem:2240} that for~$i\ge 2$ the permutation colouring~$\sigma^i_{ij}$ is automatically 
monotone. 
For these reasons, we can indeed find a set $Z_0$ of polynomial size on which all pair colourings 
$\sigma^2_{ij}$ are monotone, which essentially proves Theorem~\ref{thm:upper}. 

\subsection{Canonisation}
Coming to the details we begin by establishing the monotonicity of~$\sigma^2_{1j}$ for~$j\in [2, k]$.   

\begin{lemma}\label{lem:2222}
	If the ordering $([N]^{(k)}, \strictif)$ is sign-definite and governed by pairs, then for 
	every $j\in [2, k]$ the colouring $\sigma^2_{1j}$ is increasing on $[2, N-k]^{(2)}$. 
\end{lemma}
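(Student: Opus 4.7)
My approach is to unpack $\sigma^2_{1j}$ on the two neighbouring pairs $\{x_1, y\}$ and $\{x_1, y'\}$ through a single shared choice of ``tail'' in the $1j$-decider, and then to chain the two resulting $\strictif$-comparisons through an auxiliary set using sign-definiteness of $\eps_1$. By passing to the reversed ordering if necessary, I may assume throughout that $\eps_1 = +1$.

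Fix $x_1 < y < y' \le N-k$. Since $y' + k \le N$, I choose witnesses $y' < w_1 < \dots < w_k \le N$ and set $Y_0 = \{w_1,\dots, w_{j-2}\}$, $B = \{w_{j-1}, w_j\}$, $Z = \{w_{j+1},\dots, w_k\}$, giving the two $1j$-deciders
\[
  P = (\emptyset, \{x_1, y\}, Y_0, B, Z), \qquad P' = (\emptyset, \{x_1, y'\}, Y_0, B, Z).
\]
Label $B = \{c_j, d_j\}$ according to $\eps_j$ as in the definition of $\sigma^2_{1j}$. Because $\eps_1 = +1$, the smaller element $c_1$ equals $x_1$ in both deciders, so the three $k$-sets
\[
  U = \{x_1\} \cup Y_0 \cup \{d_j\} \cup Z, \quad V = \{y\} \cup Y_0 \cup \{c_j\} \cup Z, \quad V' = \{y'\} \cup Y_0 \cup \{c_j\} \cup Z
\]
satisfy, after invoking the hypothesis that $\strictif$ is governed by pairs,
\[
  \sigma^2_{1j}(\{x_1, y\}) = +1 \iff U \strictif V, \qquad \sigma^2_{1j}(\{x_1, y'\}) = +1 \iff U \strictif V'.
\]

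The decisive observation is that $V$ and $V'$ differ only in position~$1$. Applying sign-definiteness to the $1$-decider $(\emptyset, \{y, y'\}, Y_0 \cup \{c_j\} \cup Z)$ with $\eps_1 = +1$ gives $V \strictif V'$, and a single transitivity step then yields $U \strictif V \Longrightarrow U \strictif V'$; this is precisely the required monotonicity $\sigma^2_{1j}(\{x_1, y\}) \le \sigma^2_{1j}(\{x_1, y'\})$. The only mildly delicate step is the bookkeeping of the labelling of $B$ according to $\eps_j$, but this choice is fixed once $\eps_j$ is specified and plays a purely symbolic role in the chain. The remaining case $\eps_1 = -1$ follows either by the symmetric argument --- now $d_1 = x_1$ is the shared element and the roles of the two sides of the decider comparison swap --- or by an appeal to the reversed ordering. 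I do not foresee any serious obstacle beyond setting up the three-term $\strictif$-chain correctly.
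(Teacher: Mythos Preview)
Your proof is correct and follows essentially the same route as the paper's: you set up two $1j$-deciders sharing the tail $Y_0\cup B\cup Z$, unpack $\sigma^2_{1j}=+1$ as the comparison $U\strictif V$, and bridge to $U\strictif V'$ via the $1$-decider $(\varnothing,\{y,y'\},Y_0\cup\{c_j\}\cup Z)$ together with sign-definiteness of $\eps_1$. The only cosmetic difference is that the paper treats $\eps_1=+1$ and $\eps_1=-1$ as two short explicit cases rather than reducing to one of them by reversing $\strictif$.
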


\begin{proof}
	Given three numbers $r<s<t$ from $[2, N-k]$ satisfying~$\sigma_{1j}^2(rs)=+1$ we are to prove 
	that $\sigma_{1j}^2(rt)=+1$.  
	Let $\eps=(\eps_1, \dots, \eps_k)$ be the sign vector of~$\strictif$. We partition 
	\[
		[N-k+1, N]=Y\dcup B\dcup Z \quad \text{ such that } Y<B<Z
	\]
	and $|Y|=j-2$,~$|B|=2$,~$|Z|=k-j$
	and we enumerate $B=\{c_j, d_j\}$ such that $\eps_jc_j<\eps_jd_j$. 
	Notice that $P=(\varnothing, \{r, s\}, Y, B, Z)$ and $Q=(\varnothing, \{r, t\}, Y, B, Z)$ 
	are $1j$-deciders, whence $\sigma^2_{1j}(rs)=\sigma_{1j}(P)$ and~$\sigma^2_{1j}(rt)=\sigma_{1j}(Q)$.
	Thus our task is to derive $\sigma_{1j}(Q)=+1$ from $\sigma_{1j}(P)=+1$.
	
	\smallskip
	
	{\it \hskip 2em First Case: $\eps_1=+1$}
	
	\smallskip

	Now $\sigma_{1j}(P)=+1$ is defined to mean $rYd_jZ\strictif sYc_jZ$, where $rYd_jZ$
	abbreviates $\{r\}\cup Y\cup \{d_j\}\cup Z$ and $sYc_jZ$ is defined similarly. 
	Moreover, $S=(\{s, t\}, Yc_jZ)$ is a $1$-decider and $\eps_1(S)=\eps_1=+1$ 
	implies $sYc_jZ\strictif tYc_jZ$.
	So the transitivity of $\strictif$ leads to $rYd_jZ\strictif tYc_jZ$, for which reason
	we have indeed $\sigma_{1j}(Q)=+1$. 
	   	
	\smallskip
	
	{\it \hskip 2em Second Case: $\eps_1=-1$}
	
	\smallskip
	
	This time $\sigma_{ij}(P)=+1$ translates into $sYd_jZ\strictif rYc_jZ$
	and working with the $1$-decider $(\{s, t\}, Yd_jZ)$ we infer $tYd_jZ\strictif sYd_jZ$.
	Hence $tYd_jZ\strictif rYc_jZ$ and $\sigma_{1j}(Q)=+1$.
\end{proof}

Virtually the same proof also yields the following result.

\begin{lemma}\label{lem:2240}
	If $2\le i<j\le k$ and the ordering $([N]^{(k)}, \strictif)$ is governed by $i$-sets, 
	then the restriction of $\sigma^i_{ij}$ to $[2, N-k]^{(i)}$ is decreasing.
\end{lemma}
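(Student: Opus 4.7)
The plan is to mirror the proof of Lemma~\ref{lem:2222}, with the only genuine change being that sign-definiteness is replaced by the availability of the $(i-1)$-ary sign functions $\eps^{i-1}_\ell$ that are ensured by governance by $i$-sets. Fix $X = \{x_1, \dots, x_{i-1}\}$ and two numbers $s < t$ with $X \cup \{s\}, X \cup \{t\} \in [2, N-k]^{(i)}$. The goal is to show $\sigma^i_{ij}(X\cup\{t\}) = +1 \Longrightarrow \sigma^i_{ij}(X\cup\{s\}) = +1$, which is precisely the decreasing property.

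The $ij$-deciders I would use are
\[
    P = (X, \{s, u\}, Y, B, Z)
    \quad \text{ and } \quad
    Q = (X, \{t, u\}, Y, B, Z)\,,
\]
where $u$ and the sets $Y$, $B$, $Z$ (of sizes $j-i-1$, $2$, $k-j$) are picked in the top part of $[N]$ so that $t < u < Y < B < Z$. There is room for this because $t \le N - k$ leaves at least $k$ integers above $t$, while the total number we need is $k - i + 2$, which is at most $k$ precisely because $i \ge 2$. Since both deciders share the same initial block $X$, the governing sign $\eps_i := \eps^{i-1}_i(X)$ is the same for $P$ and $Q$. The $i$ smallest elements of the underlying $(k+2)$-set of $P$ (respectively $Q$) are $X\cup\{s\}$ (respectively $X\cup\{t\}$), so by definition $\sigma^i_{ij}(X\cup\{s\}) = \sigma_{ij}(P)$ and $\sigma^i_{ij}(X\cup\{t\}) = \sigma_{ij}(Q)$.

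The argument then splits into two cases depending on the value of $\eps_i$, exactly as in Lemma~\ref{lem:2222}. In each case, one unpacks $\sigma_{ij}(Q) = +1$ into an explicit $\strictif$-comparison between the two sets obtained from $Q$ by distributing over $B = \{c_j, d_j\}$, and then chains it with the $i$-decider $S = (X, \{s, t\}, Y')$, where $Y'$ is the completion of the ground set containing $Y$, $Z$, and the appropriate element of $B$. The sign of $S$ is $\eps_i(S) = \eps^{i-1}_i(X) = \eps_i$, which dictates whether $S(s) \strictif S(t)$ or the reverse. Transitivity of $\strictif$ then yields $\sigma_{ij}(P) = +1$, as desired.

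The only conceptual difference from Lemma~\ref{lem:2222} is the following. There, the common element of $A$ was the \emph{minimum} $r$, so one varied $\max A$ from $s$ to $t$; here, forcing the $i$ smallest elements of the ground set to equal $X\cup\{s\}$ (respectively $X\cup\{t\}$) compels $s$ (respectively $t$) to be $\min A$, making the common element $u$ play the role of $\max A$. This swap interchanges which of the two elements $c_i, d_i$ gets moved when $s$ is replaced by $t$, and thereby flips the direction of monotonicity from increasing to decreasing. No genuine new obstacle arises beyond this bookkeeping, which is why the remark that precedes the lemma is justified.
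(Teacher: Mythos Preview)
Your proof is correct and follows essentially the same approach as the paper: both build two $ij$-deciders sharing the initial block $X$ and the common larger element of $A$ (your $u$, the paper's $t=N-k+1$), then chain the hypothesis with an $i$-decider of the form $(X,\{s,t\},\cdot)$ whose sign is determined by $\eps^{i-1}_i(X)$. The only cosmetic differences are the relabelling of variables and that you prove ``$+1$ at the larger $\Rightarrow +1$ at the smaller'' while the paper proves the equivalent contrapositive ``$-1$ at the smaller $\Rightarrow -1$ at the larger''; you might also make the implicit assumption $\max(X)<s$ explicit, since it is needed both for the decreasing property and for $P$ to be a valid decider.
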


\begin{proof}
	Suppose that $X\subseteq [2, N-k]$ and $r, s\in [2, N-k]$ satisfy $|X|=i-1$, $X<r<s$, as well as 
	$\sigma^i_{ij}(Xr)=-1$. We are to prove that $\sigma^i_{ij}(Xs)=-1$. To this end we set 
	$t=N-k+1$ and take three sets $Y, Z, B\subseteq [N-k+2, N]$ with $Y<B<Z$ of the sizes
	$|Y|=j-i-1$, $|B|=2$, and $|Z|=k-j$.
	Now $P=(X, \{r, t\}, Y, B, Z)$ and $Q=(X, \{s, t\}, Y, B, Z)$ are two $ij$-deciders, we have 
	$\sigma_{ij}(P)=\sigma^i_{ij}(Xr)=-1$, and it remains to show $\sigma_{ij}(Q)=-1$. Recall 
	that~$\strictif$ being governed by $i$-sets implies, in particular, that there are $(i-1)$-ary sign 
	functions~$\eps^{i-1}_i$ and~$\eps^{i-1}_j$ defined on $[2, N]^{(k-1)}$. 
	So we can enumerate $B=\{c_j, d_j\}$ in such a way that 
	$\eps^{i-1}_j(X)c_j <\eps^{i-1}_j(X)d_j$.
	
	\smallskip
	
	{\it \hskip 2em First Case: $\eps^{i-1}_i(X)=+1$}
	
	\smallskip
	
	Our assumption $\sigma^i_{ij}(P)=-1$ is defined to mean $XtYc_jZ\strictif XrYd_jZ$. 
	Furthermore, the triple $S=(X, \{r, s\}, Yd_jZ)$ is an $i$-decider 
	and $\eps_i(S)=\eps^{i-1}_i(X)=+1$
	implies that $XrYd_jZ \strictif XsYd_jZ$. So altogether we have $XtYc_jZ\strictif XsYd_jZ$
	or, in other words, $\sigma^i_{ij}(Q)=-1$. 
	
	\smallskip
	
	{\it \hskip 2em Second Case: $\eps^{i-1}_i(X)=-1$}
	
	\smallskip

	 This time $\sigma^i_{ij}(P)=-1$ abbreviates the statement $XrYc_jZ\strictif XtYd_jZ$
	 and the case hypothesis $\eps^{i-1}_i(X)=-1$ implies $XsYc_jZ\strictif XrYc_jZ$.
	 For these reasons we have $XsYc_jZ\strictif XtYd_jZ$, i.e., $\sigma^i_{ij}(Q)=-1$.
\end{proof}

Next we elaborate on the reason why we care about the monotonicity 
of the permutation colourings. 

\begin{lemma}\label{lem:0127}
	There exists an absolute constant $C'_k$ such that if $n\ge 2k$, $N\ge C'_kn^{k^2}$, 
	the ordering $([N]^{(k)}, \strictif)$ is governed by pairs, and all permutation colourings 
	$\sigma^2_{ij}$ with $i\ge 2$ are monotone, then there is a set $Z\subseteq [N]$ of size $n$ such that 
	$(Z^{(k)}, \strictif)$ is ordered canonically.
\end{lemma}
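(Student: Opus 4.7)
The plan is to pass to successively smaller subsets of $[N]$ until we reach the fully canonical situation handled by Lemma~\ref{lem:1458}.

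First, since $\strictif$ is governed by pairs, for every $i\in[k]$ there exists a $1$-ary sign function $\eps^1_i\colon[2,N]^{(1)}\to\Phi$. Each such function is simply a $\pm1$-assignment to singletons, so pigeonholing the $k$-tuple $(\eps^1_1,\dots,\eps^1_k)$ over the $2^k$ possible sign patterns yields a set $N_\star\subseteq[2,N-k]$ of size $|N_\star|\ge 2^{-k}(N-k-1)$ on which each $\eps^1_i$ takes a constant value $\eps_i\in\Phi$. The restricted ordering $(N_\star^{(k)},\strictif)$ is thus sign-definite with sign-vector $\eps=(\eps_1,\dots,\eps_k)$ and remains governed by pairs, so Lemma~\ref{lem:2222} (after truncating the top $k$ elements of $N_\star$ to leave room for $ij$-deciders) shows that each $\sigma^2_{1j}$ with $j\in[2,k]$ is increasing on the resulting pair set. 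Combined with the hypothesis that $\sigma^2_{ij}$ is monotone for $i\ge 2$, this produces a family of $\binom{k}{2}$ monotone $\Phi$-valued colourings on the pairs of a set of size comparable to $2^{-k}N$.

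Next I would invoke Lemma~\ref{lem:1731} with index set $I=[k]^{(2)}$ and target size $2^{k-1}n$, whose hypothesis reads $|N_\star|\ge\binom{k}{2}!\,(2^{k-1}n)^{\binom{k}{2}+1}$. Since $\binom{k}{2}+1\le k^2$ for $k\ge 2$, taking $C'_k$ sufficiently large makes this follow from $N\ge C'_kn^{k^2}$. The lemma delivers a set $Z_\star\subseteq N_\star$ of size $2^{k-1}n$ on which every $\sigma^2_{ij}$ assumes a constant value $\sigma_{ij}\in\Phi$. Because $\sigma^2_{ij}$ governs the value $\sigma_{ij}(P)$ for $ij$-deciders $P\subseteq Z_\star$ via the rule involving the two smallest elements, this constancy upgrades to permutation-definiteness of $(Z_\star^{(k)},\strictif)$, while sign-definiteness is inherited from $N_\star$. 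Hence $(Z_\star^{(k)},\strictif)$ is both sign-definite and permutation-definite.

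Finally I would apply Lemma~\ref{lem:1458} to $(Z_\star^{(k)},\strictif)$, obtaining $Z\subseteq Z_\star$ with $|Z|\ge 2^{1-k}|Z_\star|=n$ on which $\strictif$ orders $Z^{(k)}$ canonically. The main obstacle is essentially bookkeeping: one must verify that each restriction preserves the structural data already accumulated --- that the ordering remains governed by pairs, that the sign and permutation colourings retain their values, and that monotonicity of the $\sigma^2_{ij}$ is inherited. Since all these data originate from the colouring $f\colon[N]^{(k+2)}\to\Gamma$ associated with $\strictif$ and its governing function $f^2\colon[N]^{(2)}\to\Gamma$, each passes consistently to any subset, so the chain of reductions is valid and the desired canonical $Z$ of size $n$ is obtained.
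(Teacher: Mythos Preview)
Your proposal is correct and follows essentially the same route as the paper's own proof: pigeonhole the unary sign functions to obtain sign-definiteness, invoke Lemma~\ref{lem:2222} to secure monotonicity of the remaining permutation colourings $\sigma^2_{1j}$, apply Lemma~\ref{lem:1731} with $I=[k]^{(2)}$ to reach a permutation-definite set of size $2^{k-1}n$, and finish with Lemma~\ref{lem:1458}. The only cosmetic difference is that the paper removes the smallest and the $k$ largest elements after the pigeonhole step rather than restricting to $[2,N-k]$ beforehand, but this affects nothing.
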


\begin{proof}
	Since the ordering $\strictif$ is governed by pairs, it has unary sign 
	functions $\eps_1, \dots, \eps_k$ defined on $[2, N]^{(1)}$. 
	In other words, every $x\in [2, N]$ has its own ``opinion''
	\[
		\eps(x)=\bigl(\eps_1(x), \dots, \eps_k(x)\bigr)\in \Phi^k
	\]
	what the sign vector should be,
	where $\Phi=\{-1, +1\}$ was introduced in~\eqref{eq:Phi}. 
	Due to the box principle, there are a set $Y\subseteq [2, N]$ of size $|Y|\ge 2^{-k}N$
	as well as a vector $\eps\in\Phi^k$ such that $\eps(y)=\eps$ holds for every $y\in Y$. 
	Now~$(Y^{(k)}, \strictif)$ is sign-definite and by Lemma~\ref{lem:2222} the set $Y_\star\subseteq Y$
	obtained from $Y$ by deleting the smallest and the~$k$ largest elements has the following 
	property: for every 
	pair $ij\in [k]^{(2)}$ the restriction of~$\sigma^2_{ij}$ to $Y_\star^{(2)}$ is monotone. 
	Notice that~$|Y_\star|\ge 2^{-k-1}N$. 
	
	Now Lemma~\ref{lem:1731} applied to $I=[k]^{(2)}$ and 
	the family~$(\sigma^2_{ij})_{ij\in I}$
	yields a set $Z_\star\subseteq Y_\star$ of size $|Z_\star|=2^{k-1}n$ that is monochromatic with
	respect to every~$\sigma^2_{ij}$. This means that $(Z_\star^{(k)}, \strictif)$ is not only 
	sign-definite but also permutation-definite, and Lemma~\ref{lem:1458} leads to the desired set $Z$.   
\end{proof}

It remains to execute the plan laid out in \S\ref{subsec:plan}.

\begin{proof}[Proof of Theorem~\ref{thm:upper}]
	Given $k\ge 2$ we let $C'_k$ denote the constant delivered by the previous lemma.
	Recall that $\Gamma$ denotes the set of linear orderings of $[k+2]^{(k)}$. 
	Without loss of generality we can assume that $n\ge 2k$ (the remaining values
	of $n$ can then be taken care of by adjusting the constant $C_k$). 
	Now we define recursively 
	\begin{align*}
		N_0&=C'_k n^{k^2}+k+1\,, \\
		N_{m}&= (2|\Gamma|)^{N_{m-1}^{m+1}} \cdot (2k^2N_{m-1}^{m})^{k^2N_{m-1}^{m}}+k+1 
			\text{ for }m\in [k-2] \,, \\
		N_{k-1}&= |\Gamma|^{N_{k-2}^{k-2}}\,,\\
		\text{ and finally } N_k&= |\Gamma|^{N_{k-1}^{k-1}}\,.
	\end{align*}

	An easy induction on $m$ discloses that for every $m\in [0, k]$ there exists an absolute 
	constant $C_{k, m}$ such that $N_m\le t_m\bigl(C_{k, m}n^{2k^2}\bigr)$ holds for all $n\ge 2k$. 
	In particular, there exists an absolute constant $C_k$ such that independently of $n$ 
	we have $N_k\le t_{k}(n^{C_k})$.
	So it suffices to show that for every 
	ordering $([N_k]^{(k)}, \strictif)$ there exists a set $Z\subseteq [N_k]$ of size $n$ 
	such that $\strictif$ orders $Z^{(k)}$ canonically. 
	Two successive applications of Corollary~\ref{cor:0053} yield a set $Z_{k-2}\subseteq [N]$ of 
	size $|Z_{k-2}|=N_{k-2}$ such that $({Z}_{k-2}^{(k)}, \strictif)$ is governed by $k$-sets. 
	
	\begin{claim}\label{clm:1632}
		For every $m\in [0, k-2]$ there exists a set $Z_m\subseteq [N_k]$ of size $|Z_m|=N_m-k-1$
		such that $(Z_{m}^{(k)}, \strictif)$ is governed by $(m+2)$-sets and 
		for every $ij\in [m+2, k]^{(2)}$ there is a monotone permutation 
		colouring $\sigma^{m+2}_{ij}\colon Z_m^{(m+2)}\lra\Phi$.
	\end{claim}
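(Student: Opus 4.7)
The strategy is downward induction on $m$, from $m=k-2$ to $m=0$. For the base case $m=k-2$, I take the set $Z_{k-2}$ of size $N_{k-2}$ produced by the two applications of Corollary~\ref{cor:0053} preceding the claim, and discard its $k+1$ largest elements so that the resulting set has the required size $N_{k-2}-k-1$. Governance by $k$-sets is clearly preserved under restriction, and the required family of monotone permutation colourings is indexed by the empty set $[k,k]^{(2)}$, so nothing further needs to be verified.

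For the inductive step, I am given $Z_{m+1}\subseteq [N_k]$ of size $N_{m+1}-k-1$ with a governing function $f^{m+3}\colon Z_{m+1}^{(m+3)}\lra\Gamma$ and, for each $ij\in [m+3,k]^{(2)}$, a monotone $(m+3)$-ary permutation colouring $\sigma^{m+3}_{ij}\colon Z_{m+1}^{(m+3)}\lra\Phi$. I invoke Lemma~\ref{lem:2150} with $r=s=m+3$, colour set $\Gamma$, index set $I=[m+3,k]^{(2)}$ (so $|I|\le k^2$), base colouring $f^{m+3}$, and family $(\sigma^{m+3}_{ij})_{ij\in I}$. The definition
\[
	N_{m+1}=(2|\Gamma|)^{N_m^{m+2}}\cdot (2k^2N_m^{m+1})^{k^2N_m^{m+1}}+k+1
\]
is tailored to match precisely the size hypothesis of the lemma with output parameter $n=N_m$; the lemma then yields a set $Z'\subseteq Z_{m+1}$ of size $N_m$, a colouring $f^{m+2}\colon (Z')^{(m+2)}\lra\Gamma$ governing $f^{m+3}$ on $(Z')^{(m+3)}$, and monotone colourings $g'_{ij}\colon (Z')^{(m+2)}\lra\Phi$ governing $\sigma^{m+3}_{ij}$ on $(Z')^{(m+3)}$. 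A direct unwinding of the definitions shows that each $g'_{ij}$ must coincide with the $(m+2)$-ary permutation colouring $\sigma^{m+2}_{ij}$ of the restricted ordering.

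To finish, I let $Z_m$ consist of all elements of $Z'$ except the smallest and the $k$ largest, so that $|Z_m|=N_m-k-1$. Both governance by $(m+2)$-sets and monotonicity of each $g'_{ij}=\sigma^{m+2}_{ij}$ survive the restriction from $Z'$ to $Z_m$, taking care of all pairs in $[m+3,k]^{(2)}$. For the remaining pairs $(m+2,j)$ with $j\in[m+3,k]$ that were deliberately \emph{excluded} from $I$, I appeal to Lemma~\ref{lem:2240}: since $(Z')^{(k)}$ is governed by $(m+2)$-sets and $m+2\ge 2$, the colouring $\sigma^{m+2}_{(m+2)j}$ is automatically decreasing on $[2,N_m-k]^{(m+2)}$ of $Z'$, and by construction this subset equals $Z_m^{(m+2)}$. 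This completes the inductive step.

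The principal conceptual point is to \emph{exclude} the pivot pairs $(m+2,j)$ from $I$ and let Lemma~\ref{lem:2240} handle them for free; this keeps $|I|\le k^2$ uniformly in $m$, which is exactly what makes the recursion defining $N_{m+1}$ grow by only one level of exponentiation above $N_m$. The remaining obstacles are minor and amount to routine bookkeeping: the trimming by $k+1$ elements (forced by the interior segment $[2,N-k]$ appearing in Lemma~\ref{lem:2240}), and the identification of the output $g'_{ij}$ with the permutation colouring $\sigma^{m+2}_{ij}$, which follows from chaining the governance relations through the definitions of \S\ref{subsec:32}.
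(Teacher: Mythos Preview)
Your proof is correct and follows essentially the same approach as the paper: downward induction from $m=k-2$, using Lemma~\ref{lem:2150} at each step to pass from governance by $(m+3)$-sets to $(m+2)$-sets while preserving monotonicity of the already-available permutation colourings, then invoking Lemma~\ref{lem:2240} to supply monotonicity for the newly relevant colourings $\sigma^{m+2}_{(m+2)j}$, and finally trimming the smallest and $k$ largest elements to land in the range where Lemma~\ref{lem:2240} applies. Your index shift (stepping from $m+1$ to $m$ rather than from $m$ to $m-1$) and your explicit trimming in the base case are cosmetic; one small remark is that the word ``excluded'' for the pairs $(m+2,j)$ is slightly misleading, since these pairs could not have been placed in $I$ anyway --- the inductive hypothesis provides no monotone $(m+3)$-ary colouring for them --- but this does not affect the argument.
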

	
	
	\begin{proof}[Proof of Claim~\ref{clm:1632}]
		We argue by decreasing induction on $m$, the base case $m=k-2$ already being known. 
		Now suppose that $m\in [k-2]$ and that we already have obtained a set~$Z_m\subseteq [N_k]$ with 
		the desired 
		property. 
		
		We plan to apply Lemma~\ref{lem:2150}
		to $N_{m-1}$, $m+2$, $m+2$, and $Z_m$ here in place of~$n$,~$r$,~$s$, and~$X$ there.  
		Instead of~$f$ there we take the function $f^{m+2}\colon Z_m^{(m+2)}\lra \Gamma$ 
		governing the function $f\colon [N_k]^{(k+2)}\lra \Gamma$ associated with $\strictif$. 
		Finally, we take the index set $I=[m+2, k]^{(2)}$,
		which clearly 
		satisfies $|I|\le k^2$, and the family $(\sigma^{m+2}_{ij})_{ij\in I}$ of monotone 
		permutation colourings $\sigma^{m+2}_{ij}\colon Z_m^{(m+2)}\lra \Phi$ provided by the 
		induction hypothesis. 
		Since 
		\[
			N_m-k-1
			\ge
			(2|\Gamma|)^{N_{m-1}^{m+1}} \cdot (2|I|N_{m-1}^{m})^{|I|N_{m-1}^{m}}\,,
		\]
		the intended application of Lemma~\ref{lem:2150} is justified and we can obtain a set 
		$Z'_{m-1}\subseteq Z_m$ of size $|Z'_{m-1}|=N_{m-1}$ 
		such that $\bigl({Z'}_{m-1}^{(k)}, \strictif\bigr)$ 
		is governed by $(m+1)$-sets and the permutation colourings $(\sigma^{m+1}_{ij})_{ij\in I}$ 
		are monotone. 
		The set $Z_{m-1}$ arising from $Z'_{m-1}$ by cutting off the smallest and the~$k$ 
		largest elements has all 
		desired properties, because by Lemma~\ref{lem:2240} the permutation 
		colourings $\sigma^{m+1}_{m+1, j}$
		with $j\in [m+2, k]$ are monotone as well. 
	\end{proof}
	
	Observe that the case $m=0$ yields a set $Z_0$ of size $|Z_0|=N_0-(k+1)=C'_k n^{k^2}$. 
	By our choice of $C'_k$ according to Lemma~\ref{lem:0127} there exists a set $Z\subseteq Z_0$
	of size $|Z|=n$ such that $(Z^{(k)}, \strictif)$ is ordered canonically.  
	This concludes the proof of Theorem~\ref{thm:upper}. 
\end{proof}

\section{Shelah's upper bound on Erd\H{o}s-Rado numbers}\label{sec:ERS}
In this section we show that the results in~\S\ref{subsec:31} can also be used for reproving 
Shelah's theorem that $k$-uniform Erd\H{o}s-Rado numbers grow (at most) $k-1$ times exponentially. 
More precisely, we shall show $\ER^{(k)}(n)\le t_{k-1}(C_k n^{6k})$, where, let us recall, 
an estimate of the form $\ER^{(k)}(n)\le N$ means that for every equivalence relation~$\equiv$ 
on~$[N]^{(k)}$ there exist an $n$-element subset $X\subseteq [N]$ and a set $I\subseteq [k]$ 
such that $\equiv$ and $\equiv_I$ agree on $X^{(k)}$. Here, for two sets of positive integers 
$x=\{x_1, \dots, x_k\}$ and $y=\{y_1, \dots, y_k\}$ with $x_1<\dots<x_k$ 
and $y_1<\dots<y_k$ and for a subset 
$I\subseteq [k]$ the statement $x\equiv_I y$ is defined to mean that~$x_i=y_i$ holds for all $i\in I$. 
Throughout the entire section we fix an integer $k\ge 2$. Moreover, $\Gamma$ will always denote the 
set of all equivalence relations on $[k+2]^{(k)}$.

\subsection{Preparation} \label{subsec:preparation}

Beginning as in Remark~\ref{rem:1945} we {\it associate} with every equivalence relation 
$([N]^{(k)}, \equiv)$ the colouring $f\colon [N]^{(k+2)}\lra \Gamma$ mapping 
every set $A\in [N]^{(k+2)}$ to the unique equivalence relation $\sim\in \Gamma$ with the 
property that $(A^{(k)}, \equiv)$ and $([k+2]^{(k)}, \sim)$ are isomorphic via the order-preserving 
map from $A$ to $[k+2]$. For $m\in [k+2]$ we say that $([N]^{(k)},\equiv)$ is {\it governed 
by $m$-sets} if there exists a function $f^m \colon [N]^{(m)} \lra \Gamma$ which governs $f$. 
For instance, every equivalence relation is governed by $(k+2)$-sets. Next we associate with every 
equivalence relation some auxiliary colourings $g_i$ using the set of colours $\Phi=\{-1, +1\}$
introduced in~\eqref{eq:Phi}.

\begin{definition} \label{dfn:pdec}
	Let $N\ge k+1$ be an integer and let $i \in [k]$ be an index. 
	The {\it $i^{\mathrm{th}}$ auxiliary function} $g_i \colon [N]^{(k+1)}\lra \Phi$ 
	of an equivalence relation $([N]^{(k)}, \equiv)$ is defined by 
	\[
		g_i(z_1, \dots, z_{k+1})= 1 
		\,\,\,\Longleftrightarrow\,\,\,
		\{z_1, \dots, z_{i-1}, z_{i+1}, \dots, z_{k+1}\} 
		\equiv 
		\{z_1, \dots, z_{i}, z_{i+2}, \dots, z_{k+1}\}
	\]
	whenever $1\le z_1<\dots<z_{k+1}\le N$.
\end{definition}

Like the permutation colourings in the previous section, these functions will later be synchronised 
by means of an iterative application of Lemma~\ref{lem:2150}. Notice that if an equivalence relation 
$([N]^{(k)},\equiv)$ is governed by $m$-sets, then there exists a function 
$g^{m-1}_i \colon [2, N]^{(m-1)} \lra \Phi$ governing the restriction of $g_i$ to $[2, N]^{(k+1)}$. 
Our next result asserts that at some stage of our argument these functions will automatically 
become monotone. 

\begin{lemma}\label{lem:gov}
	If $N\ge 2k$, $m \in [k]$, and $([N]^{(k)},\equiv)$ is governed by $(m+1)$-sets, 
	then the function $g_m^m \colon [2, N]^{(m)} \lra \Phi$ is increasing on $[2,N-k]^{(m)}$.
\end{lemma}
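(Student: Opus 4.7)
The plan is to unwind the defining property of $g_m^m$ as a governing function for $g_m$ and then deduce monotonicity by combining two applications of the hypothesis with the transitivity of $\equiv$.

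First I would record the operational content of $g_m^m(x_1,\dots,x_{m-1},y)=+1$. Since governance of $\equiv$ by $(m+1)$-sets supplies a function $g_m^m\colon[2,N]^{(m)}\lra\Phi$ governing the restriction of $g_m$ to $[2,N]^{(k+1)}$, the value $g_m^m(x_1,\dots,x_{m-1},y)$ coincides with $g_m(x_1,\dots,x_{m-1},y,\alpha_1,\dots,\alpha_{k+1-m})$ for every admissible extension $y<\alpha_1<\dots<\alpha_{k+1-m}\le N$. Writing $X=(x_1,\dots,x_{m-1})$ and unfolding Definition~\ref{dfn:pdec}, the assumption $g_m^m(X,y)=+1$ therefore says that
\[
	\{X,\alpha_1,\alpha_2,\dots,\alpha_{k+1-m}\}
	\equiv
	\{X,y,\alpha_2,\dots,\alpha_{k+1-m}\}
\]
holds for every choice of $y<\alpha_1<\dots<\alpha_{k+1-m}\le N$. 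In words: with the first $m-1$ coordinates and the top $k-m$ coordinates frozen, replacing $\alpha_1$ by the smaller number $y$ preserves the $\equiv$-class.

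Now suppose $x_1<\dots<x_{m-1}<y<y'$ all lie in $[2,N-k]$ and $g_m^m(X,y)=+1$; I want to deduce $g_m^m(X,y')=+1$. By governance it suffices to verify the displayed equivalence with $y$ replaced by $y'$ at a single admissible extension. I would pick any $y'<\beta_1<\dots<\beta_{k+1-m}\le N$, which exists because $y'\le N-k$ leaves at least $k\ge k+1-m$ integers available above $y'$. Applying the hypothesis first with $(\alpha_1,\alpha_2,\dots)=(\beta_1,\beta_2,\dots)$ (admissible since $y<y'<\beta_1$) and then with $(\alpha_1,\alpha_2,\dots)=(y',\beta_2,\dots)$ (admissible since $y<y'<\beta_2$) yields
\[
	\{X,\beta_1,\beta_2,\dots,\beta_{k+1-m}\}
	\equiv
	\{X,y,\beta_2,\dots,\beta_{k+1-m}\}
	\equiv
	\{X,y',\beta_2,\dots,\beta_{k+1-m}\}\,.
\]
Transitivity and symmetry of $\equiv$ then identify the outer two sets, which is precisely $g_m^m(X,y')=+1$.

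The proof is essentially routine once the hypothesis has been translated into the language of $\equiv$; the only point that requires care is verifying that both chosen extensions fit inside $[2,N]$ in strictly increasing order. The restriction $y,y'\in[2,N-k]$ in the statement is exactly what guarantees this, and the special role of the index $i=m$ in $g_m$ is what makes it possible to freely swap $\alpha_1$ between $\beta_1$ and $y'$ while still landing in the same $m$-variable evaluation of $g_m^m$.
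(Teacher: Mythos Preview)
Your proof is correct and follows essentially the same approach as the paper: obtain two $\equiv$-equivalences sharing a common $k$-set and conclude by transitivity. The only cosmetic difference is that the paper routes its second equivalence through the $\Gamma$-valued function $f$ (using the buffer element $a_0=1$), whereas you invoke the governing property of $g_m^m$ a second time with the extension $(y',\beta_2,\dots)$; your version is a mild streamlining of the same idea.
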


\begin{proof}
Let some integers $2\le a_1 <\dots < a_m < a'_m\le N-k$ satisfying $g_m^m(a_1,\dots,a_m)=1$
be given. We are to prove that $g_m^m(a_1,\dots, a_{m-1}, a'_m)=1$ holds as well.
Set $a_0=1$ and $a_{m+i}=N-k+i$ for every $i\in [k+1-m]$. Our assumption yields
$g_m({a_1},\dots,a_{k+1})=1$, whence
\[ 
	\{a_1,\dots,a_m,a_{m+2},\dots,a_{k+1}\} 
	\equiv 
	\{a_1,\dots,a_{m-1},a_{m+1},\dots,a_{k+1}\}\,. 
\]

This piece of information is among the facts encoded in $f(a_0,\dots,a_{k+1})$
and, as $([N]^{(k)},\equiv)$ is governed by $(m+1)$-sets, the same information is known 
to 
\[
	f(a_0,\dots,a_m,a'_m,a_{m+2},\dots,a_{k+1})\,.
\]
Consequently, we also have
\[  
	\{a_1,\dots,a_m,a_{m+2},\dots,a_{k+1}\} 
	\equiv 
	\{a_1,\dots,a_{m-1},a'_m,a_{m+2},\dots,a_{k+1}\}\,. 
\]
Both displayed equivalences have the same left side, so the transitivity of $\equiv$ leads to
\[
	\{a_1,\dots,a_{m-1},a'_m,a_{m+2},\dots,a_{k+1}\} 
	\equiv 
	\{a_1,\dots,a_{m-1},a_{m+1},\dots,a_{k+1}\}\,,
\]
for which reason 
\[
	g_m^m(a_1, \dots, a_{m-1}, a'_m)
	=
	g_m(a_1, \dots, a_{m-1},a'_m, a_{m+1}, \dots, a_{k+1})
	=
	1\,. \qedhere
\]
\end{proof}

Let us observe that for canonical equivalence relations $\equiv_I$ with $I\subseteq [k]$ the 
auxiliary functions $g_1, \dots, g_k$ are constant. Moreover, if $\eps_1, \dots, \eps_k\in\Phi$
denote the values these functions always attain, then $I=\{i\in [k]\colon \eps_i=-1\}$. 
In the converse direction, if we have an equivalence relation $([N]^{(k)}, \equiv)$ with the property
that such colours $\eps_1, \dots, \eps_k\in\Phi$ exist, then we may 
define $I=\{i\in [k]\colon \eps_i=-1\}$, but it does not follow immediately that~$\equiv$ 
coincides with the canonical equivalence relation~$\equiv_I$. This is why we require an additional 
purging argument. 

\begin{definition}\label{d:p}
	For $i\in [k]$ an equivalence relation $([N]^{(k)},\equiv)$ is said to be {\it $i$-purged} 
	if for all integers $a_1, \dots, a_i, b_1, \dots, b_i, c_{i+1}, \dots, c_{k+1}$
	satisfying
	\begin{enumerate}[label=\rmlabel]
		\item\label{p:1} $1\le a_1 < \dots < a_i$ and $ 1\le b_1 < \dots < b_i$,
		\item\label{p:2} $\max(a_i,b_i) < c_{i+1} < \dots < c_{k+1}\le N$,
		\item\label{p:3} $g_i(a_1,\dots,a_i,c_{i+1},\dots,c_{k+1})
										=g_i(b_1,\dots,b_i,c_{i+1},	\dots,c_{k+1})= -1$,
		\item\label{p:4} and $\{a_1,\dots,a_i,c_{i+1},\dots,c_{k}\}\equiv 
										\{b_1,\dots,b_i,c_{i+1},\dots,c_{k}\}$
	\end{enumerate}
	we have $a_i = b_i$.
\end{definition}

The r\^{o}le of the next lemma is roughly comparable to that of Lemma~\ref{lem:1458}
in the previous section. 
 
\begin{lemma}\label{lem:fin}
	Suppose that the equivalence relation $([N]^{(k)},\equiv)$ is $i$-purged for every $i \in [k]$. 
	If the auxiliary functions $g_1,\dots,g_k$ are constant, then $([N-1]^{(k)},\equiv)$ is canonical.
\end{lemma}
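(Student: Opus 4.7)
The plan is to set $\eps_i \in \Phi$ as the constant value of $g_i$ and $I = \{i \in [k] \colon \eps_i = -1\}$, and then to show that on $[N-1]^{(k)}$ the equivalence relations $\equiv$ and $\equiv_I$ coincide by proving both inclusions.

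For $\equiv_I\, \subseteq\, \equiv$: whenever $i \notin I$ the function $g_i$ is identically $+1$, which means that for any $k$-set $x = \{x_1 < \dots < x_k\} \subseteq [N-1]$ and any $v \in (x_{i-1}, x_{i+1}) \setminus \{x_i\}$ (with the convention $x_0 = 0$ and $x_{k+1} = N$), applying $g_i$ to the ordered tuple $(x_1, \dots, x_{i-1}, \min(x_i, v), \max(x_i, v), x_{i+1}, \dots, x_k)$ exhibits the set obtained from $x$ by replacing $x_i$ with $v$ as $\equiv$-equivalent to $x$. Given $x \equiv_I y$, the common $I$-entries partition the remaining indices into slots whose bounding $I$-values are identical in $x$ and $y$. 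Within each such slot one may push the non-$I$ entries up to the canonical configuration in which the rightmost value is $R-1$, the next is $R-2$, and so on, where $R$ is the slot's upper bound ($R = N$ for the topmost slot). This is accomplished by a right-to-left sequence of single-position moves, each of which is easily checked to be valid. Applying the push to both $x$ and $y$ produces the same canonical representative, and transitivity of $\equiv$ yields $x \equiv y$.

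For $\equiv\, \subseteq\, \equiv_I$: suppose for contradiction that $x, y \in [N-1]^{(k)}$ satisfy $x \equiv y$ but $x \not\equiv_I y$, and let $i \in I$ be the largest index with $x_i \neq y_i$. By the maximality of $i$, the $I$-entries of $x$ and $y$ agree on every index above $i$, so every slot of positions strictly above $i$ shares its upper $I$-boundary in $x$ and $y$. Applying the canonisation of the previous paragraph only to positions strictly above $i$ (and leaving all positions at index $\leq i$ untouched) produces $x' \equiv x$ and $y' \equiv y$ with $x'_j = y'_j$ for every $j > i$, while $x'_j = x_j$ and $y'_j = y_j$ for all $j \leq i$; in particular $x' \equiv y'$ by transitivity and $x'_i \neq y'_i$. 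Now the $i$-purged condition applies with $a_j = x'_j$ and $b_j = y'_j$ for $j \leq i$, $c_j = x'_j = y'_j$ for $i < j \leq k$, and $c_{k+1} = N$: conditions \ref{p:1}, \ref{p:2}, and \ref{p:4} are immediate, and \ref{p:3} holds because $g_i \equiv -1$. The conclusion $x'_i = y'_i$ is the desired contradiction.

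The main obstacle is the bookkeeping in the canonisation step: one must verify that each single-position slide produces a valid intermediate $k$-set, which is arranged by processing positions from the top of each slot downwards, and that the canonical configurations on positions $j > i$ agree in $x$ and $y$, which uses the shared upper $I$-boundaries granted by the maximality of $i$. It is also worth noting that the padding element $N \in [N] \setminus [N-1]$ is essential for supplying $c_{k+1} = N$ in the $i$-purged application, which is precisely why the lemma yields canonicity on $[N-1]^{(k)}$ rather than on all of $[N]^{(k)}$.
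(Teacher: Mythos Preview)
Your proof is correct and rests on the same two mechanisms as the paper's argument: for $i\notin I$ the constancy $g_i\equiv +1$ lets you slide the $i^{\mathrm{th}}$ coordinate freely within its slot, and for $i\in I$ the $i$-purged hypothesis together with the padding element $c_{k+1}=N$ forbids $\equiv$-equivalent sets from disagreeing at position $i$.

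The organisation, however, differs. The paper runs a single induction on $\mu=|\{i\colon a_i\ne b_i\}|$ that handles both implications simultaneously: for the backward direction it takes the \emph{smallest} disagreeing index $i(\star)$ (automatically not in $I$), replaces one coordinate, and invokes the induction hypothesis; for the forward direction it takes the \emph{largest} disagreeing index $i(\star)$ (over all of $[k]$, not just $I$), so that the tails already agree and the $i(\star)$-purged condition applies directly, forcing $i(\star)\notin I$ and reducing $\mu$ by one. Your approach instead builds an explicit canonical representative (pushing non-$I$ entries to the top of each slot) to prove $\equiv_I\subseteq\,\equiv$ without induction, and then reuses that canonicalisation above the largest disagreeing $I$-index to set up the purging contradiction. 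This trades the clean induction for a more concrete construction; it buys you an explicit normal form at the cost of the bookkeeping you correctly flag (checking each slide is valid and that the tails of $x'$ and $y'$ genuinely coincide, which hinges on the canonical values depending only on the slot's upper bound and size, not on its lower bound).
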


\begin{proof}
	Let $\eps_1, \dots, \eps_k\in \Phi$ be the values which $g_1, \dots, g_k$ always attain
	and set 
	\[
		I = \big\{i\in [k] \colon \eps_i = -1\big\}\,. 
	\]
	It suffices to prove that for any two sets $a= \{ a_1,\dots,a_k\}$ and $b = \{ b_1,\dots,b_k\}$
	from $[N-1]^{(k)}$ whose elements have just been enumerated in increasing order the statement 
	\begin{equation}\label{eq:4940}
		\{a_1,\dots,a_k\} \equiv \{b_1,\dots,b_k\} 
		\,\,\,\Longleftrightarrow\,\,\, 
		\forall i \in I \,\,\, a_i = b_i 
	\end{equation}
	is valid. To this end we argue by induction on the 
	number $\mu = |\{ i \in [k] \colon a_i \neq b_i \}|$ counting the indices where $a$ and $b$ 
	disagree. 
	In the base case $\mu=0$ we have $a=b$ and both sides of the equivalence~\eqref{eq:4940} are true. 
	Now suppose that $\mu\in [k]$ has the property that~\eqref{eq:4940} 
	holds for all pairs of sets from $[N]^{(k)}$ disagreeing less than $\mu$ times. For the 
	induction step we assume that $a, b\in [N]^{(k)}$ disagree with respect to exactly $\mu$ indices. 
	
	Beginning with the backwards implication we assume $a_i = b_i$ for all $i \in I$ 
	and intend to derive $a\equiv b$. 
	Notice that the smallest index $i(\star) \in [k]$ satisfying $a_{i(\star)} \ne b_{i(\star)}$ 
	belongs to~${[k]\setminus I}$, whence $\eps_{i(\star)} =1$. 
	By symmetry we may suppose that $a_{i(\star)} < b_{i(\star)}$. 
	The induction hypothesis implies that the set 
	$c=b\cup\{a_{i(\star)}\}\setminus\{b_{i(\star)}\}$ is equivalent to $a$ with respect 
	to~$\equiv$. Moreover, the definition of  
	\[ 
		g_{i(\star)}(b_1,\dots,b_{i(\star)-1},a_{i(\star)},b_{i(\star)},\dots,b_k)
		=
		\varepsilon_{i(\star)} 
		= 1
	\]
	yields $b\equiv c$, so altogether we have indeed $a\equiv c\equiv b$.

	Now suppose conversely that $a\equiv b$, where $a, b\in [N-1]^{(k)}$. 
	Due to $\mu>0$ there exists a largest 
	integer $i(\star) \in [k]$ such that $a_{i(\star)} \ne b_{i(\star)}$. 
	Applying the assumption that $\equiv$ be $i(\star)$-purged to 
	\begin{enumerate}
		\item[$\bullet$] $a_1<\dots<a_{i(\star)}$, $b_1<\dots<b_{i(\star)}$,
		\item[$\bullet$] and $a_{i(\star)+1}=b_{i(\star)+1}, \dots, a_k=b_k, N$ playing the 
			r\^{o}les of $c_{i(\star)+1}, \dots, c_{k+1}$
	\end{enumerate}
	we infer $\varepsilon_{i(\star)} = +1$, i.e., $i(\star) \notin I$. 
	By symmetry we may again assume $a_{i(\star)} < b_{i(\star)}$. Now 
	\[
		g_{i(\star)}(a_1, \dots, a_{i(\star)}, b_{i(\star)}, a_{i(\star)+1}, \dots, a_k)
		=
		\eps_{i(\star)}
		=
		1
	\]
	implies that the set $c=a\cup\{b_{i(\star)}\}\setminus\{a_{i(\star)}\}$ is equivalent to $a$
	with respect to $\equiv$. 
	Since~$b$ and~$c$ disagree only $\mu-1$ times, the induction hypothesis yields 
	$b_i = c_i$ for all~$i\in I$. Together with $i(\star)\not\in I$ this shows that we have indeed
	$a_i = b_i$ for all~$i \in I$. This completes the induction step and, therefore, the proof 
	of~\eqref{eq:4940}. In particular,~$\equiv$ is canonical. 
\end{proof}

The next result shows that from a quantitative point of view purging is accompanied by a  
polynomial dependence of the involved constants.  

\begin{lemma}\label{lem:pur}
	Let $m\in[k]$, $n\ge k$, and $N\geq (2kn)^{2k}$.
	If an equivalence relation $([N]^{(k)},\equiv)$ is governed by $(m+1)$-sets, 
	then there exists a set $Z\subseteq [N]$ of size $n$ such that 
	$(Z^{(k)},\equiv)$ is $m$-purged.
\end{lemma}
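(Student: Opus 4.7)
We construct $Z=\{z_1<\cdots<z_n\}$ greedily in increasing order, preserving at each step the absence of a \emph{pre-obstruction}: a tuple $(a_1,\dots,a_m,b_1,\dots,b_m,c_{m+1},\dots,c_k)$ as in Definition~\ref{d:p}, minus $c_{k+1}$. Because $\equiv$ is governed by $(m+1)$-sets, $g_m$ restricted to $[2,N]^{(k+1)}$ is governed by some $g_m^m\colon [2,N]^{(m)}\lra\Phi$, so condition~\ref{p:3} reduces to $g_m^m(a)=g_m^m(b)=-1$ (independent of the $c$'s) while condition~\ref{p:4} already does not involve $c_{k+1}$. Consequently $Z$ is $m$-purged iff no pre-obstruction fits inside $Z$.

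The base case $m=1$ is essentially vacuous: evaluating $g_1$ on the $(k+1)$-set $\{a_1,b_1,c_2,\dots,c_k\}$ shows that $\{a_1,c\}\equiv\{b_1,c\}$ is equivalent to $g_1^1(a_1)=+1$, contradicting the requirement $g_1^1(a_1)=-1$ imposed by condition~\ref{p:3}. Hence any $n$-element subset works for $m=1$, and we may assume $m\ge 2$.

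In the greedy step, $z_{j+1}$ is the new maximum of $Z_{j+1}$, so any new pre-obstruction must place $z_{j+1}$ in its largest slot: $c_k$ when $k>m$, or (after the symmetric relabelling $a\leftrightarrow b$) $b_m$ when $k=m$. The remaining $k+m-1$ entries form a \emph{template} in $Z_j$, yielding at most $n^{k+m-1}\le n^{2k-1}$ templates to worry about. For a single template, the forbidden values of $z_{j+1}$ are precisely those for which a specific equivalence $x(z_{j+1})\equiv y(z_{j+1})$ between two $k$-sets holds. Since $|x\cup y|\le k+m\le 2k$, governance of $\equiv$ by $(m+1)$-sets lets us decide this equivalence via $f^{m+1}$ — directly when $|a\triangle b|\le 2$ (so $|x\cup y|\le k+2$), and otherwise via a chain of single-coordinate swaps each controlled by some $g_i^m$. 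The resulting indicator is piecewise constant in $z_{j+1}$ with break points only at the $O(k)$ template coordinates, and the monotonicity of $g_m^m$ (Lemma~\ref{lem:gov}) rules out degenerate templates where the forbidden set would otherwise be arbitrarily long. So each template forbids at most $O(n)$ choices of $z_{j+1}$, giving at most $n^{2k-1}\cdot O(n)$ forbidden values per step, which fits comfortably inside $N\ge(2kn)^{2k}$ across all $n$ greedy steps.

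The main obstacle is the per-template bound: one must carefully reduce the two-sided equivalence to a small number of $f^{m+1}$-evaluations (straightforward when $|a\triangle b|\le 2$), handle the case $|a\triangle b|>2$ by chaining single-element swaps whose behaviour is governed by the auxiliary functions $g_i^m$, and invoke the monotonicity from Lemma~\ref{lem:gov} to discard templates whose ``forbidden interval'' would otherwise fail to be polynomially short.
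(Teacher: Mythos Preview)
Your greedy approach is genuinely different from the paper's, but the plan has a real gap at the per-template bound, and the suggested fixes do not work.

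\textbf{Gap 1 (even for $m=2$).} When $|a\cup b|\le m+2$, the union $a\cup b\cup\{c_{m+1},\dots,c_k\}$ sits inside a single $(k+2)$-set whose first $m+1$ elements lie in $a\cup b$. Hence the truth of $\{a,c\}\equiv\{b,c\}$ is independent of \emph{all} of $c_{m+1},\dots,c_k$. So a template $(a,b,c_{m+1},\dots,c_{k-1})$ with $g_m^m(a)=g_m^m(b)=-1$ is either harmless for every $c_k$ or forbidden for every $c_k$. In the latter case your greedy step has no valid choice of $z_{j+1}$ at all. Your inductive hypothesis ``$Z_j$ contains no pre-obstruction'' does not exclude such templates: a bad pair $(a,b)$ with $b_m=z_j$ is not yet a pre-obstruction in $Z_j$ (there is no room for $c_{m+1},\dots,c_k$ above it), but it kills every choice of $z_{j+1}$. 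Monotonicity of $g_m^m$ does not help here; it only says that once $g_m^m$ turns $+1$ it stays $+1$, which is orthogonal to the issue.

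\textbf{Gap 2 (for $m\ge 3$).} When $|a\cup b|>m+2$ the two $k$-sets $\{a,c\}$ and $\{b,c\}$ do not fit in a common $(k+2)$-set, so $f^{m+1}$ cannot be evaluated on them jointly. Your proposed ``chain of single-coordinate swaps controlled by $g_i^m$'' does not determine the endpoint equivalence: knowing that each intermediate pair is \emph{non}-equivalent tells you nothing about whether the endpoints are equivalent. So the indicator need not be piecewise constant in $z_{j+1}$ in any useful sense.

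\textbf{What the paper does instead.} The paper abandons greediness and uses a first-moment argument. The key observation (its Claim) is that in any pre-obstruction the coordinate $b_m$ is \emph{uniquely determined} by the remaining entries $(a_1,\dots,a_m,b_1,\dots,b_{m-1},c_{m+1},\dots,c_k)$: two candidates $b_m<b'_m$ would give
\[
\{b_1,\dots,b_m,c\}\equiv\{a,c\}\equiv\{b_1,\dots,b_{m-1},b'_m,c\},
\]
forcing $g_m^m(b_1,\dots,b_m)=+1$, contrary to condition~\ref{p:3}. This single observation bounds the total number $|\Psi|$ of pre-obstructions by roughly $N^{k+m-1}$ times a constant depending only on $k$, and averaging over all $n$-subsets of $[2,N]$ then shows some $Z$ contains none. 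The uniqueness of $b_m$ is exactly the missing ingredient your plan needs; once you have it, the averaging argument is both shorter and avoids the dead-end templates that break the greedy construction.
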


\begin{proof}
The potential counterexamples to $([2, N]^{(k)}, \equiv)$ being $m$-purged can be regarded 
as $(k+m+1)$-tuples of integers satisfying the conditions~\ref{p:1}\,--\,\ref{p:4} 
from Definition~\ref{d:p} and $a_m\ne b_m$. By symmetry it suffices to worry about those 
counterexamples for which $a_m<b_m$. 
Bearing this in mind we define $\Psi$ to be the set of 
all $(k+m)$-tuples 
\[
	(a_1,\dots,a_m,b_1,\dots,b_m,c_{m+1},\dots,c_k)\in [2, N]^{k+m}
\]
such that 
\begin{enumerate}
	\item[$\bullet$] $a_1 < \dots < a_m$, $b_1 < \dots < b_m$, 
	\item[$\bullet$] $a_m < b_m$,
	\item[$\bullet$] $\max(a_m,b_m) < c_{m+1}<\dots < c_k$,
	\item[$\bullet$] $g_m^m(b_1,\dots,b_m)= -1$,
	\item[$\bullet$] and $\{a_1,\dots,a_m,c_{m+1},\dots,c_k\} 
			\equiv \{b_1,\dots,b_m,c_{m+1},\dots,c_k\}$.
\end{enumerate}
The key observation is that two members of $\Psi$ cannot differ in their $(2m)^{\mathrm{th}}$
entry alone. 

\begin{claim}\label{clm:dif}
	If $(a_1,\dots,a_m,b_1,\dots,b_m,c_{m+1},\dots,c_k)$ 
	and $(a_1,\dots,a_m,b_1,\dots,b'_m,c_{m+1},\dots,c_k)$ belong to $\Psi$, then $b_m=b'_m$.
\end{claim}

\begin{proof}
	Assume contrariwise that $b_m < b'_m$. Due to 
	\[ 
		\{b_1,\dots,b'_m,c_{m+1},\dots,c_k\} 
		\equiv 
		\{a_1,\dots,a_m,c_{m+1},\dots,c_k\} 
		\equiv 
		\{b_1,\dots,b_m,c_{m+1},\dots,c_k\}  
	\]
	we know $g_m(b_1,\dots,b_m,b_m^\prime,c_{m+1},\dots,c_k)= 1$, which contradicts 
	$g_m^m(b_1,\dots,b_m)=-1$.
\end{proof}

Now we consider the partition 
\[
	\Psi = \Psi_0 \dcup \dots \dcup \Psi_{m-1}
\]
defined by 
\[
	\Psi_i
	=
	\bigl\{(a_1,\dots,a_m,b_1,\dots,b_m,c_{m+1},\dots,c_k)\in\Psi\colon
		|\{ a_1,\dots,a_m,b_1,\dots,b_{m-1} \}|=m+i\bigr\}
\]
for every $i\in [0, m-1]$. As a consequence of Claim~\ref{clm:dif} 
every $(a_1,\dots,c_k)\in\Psi_i$ can be described uniquely 
by specifying 
\begin{enumerate}[label=\nlabel]
	\item\label{it:41} the $(k+i)$-element subset $X=\{a_1, \dots, a_m, b_1, \dots, b_{m-1}, c_{m+1}, 
			\dots, c_k\}$ of $[2, N]$
	\item\label{it:42} and telling for each of $a_1, \dots, a_m, b_1, \dots, b_{m-1}$
			to which of the $m+i$ members of 
			$Y=\{a_1, \dots, a_m, b_1, \dots, b_{m-1}\}$ they are equal.
\end{enumerate}
There are $\binom{N-1}{k+i}$ possibilities for the first decision. The set $Y$ can be obtained 
from $X$ by removing the $k-m$ largest elements and, in particular, $Y$ is definable from $X$. 
Thus there are at most $(m+i)^{2m-1}$ possibilities for~\ref{it:42}. 
Altogether, theses considerations establish 
\[ 
	|\Psi_i| 
	\le 
	\binom{N-1}{k+i}(m+i)^{2m-1} 
	\le 
	(N-1)^{k+i}(2m)^{2m-1} 
\]
In combination with
\begin{align*}
	\sum_{Z \in [2, N]^{(n)}}|\Psi \cap Z^{k+m}|
	&= 
	\sum_{i =0}^{m-1}\binom{(N-1)-(k+i+1)}{n-(k+i+1)}|\Psi_i| \\
	&\le 
	\binom{N-1}{n}\sum_{i =0}^{m-1}\Big(\frac{n}{N-1}\Big)^{k+i+1}|\Psi_i|
\end{align*}
this implies
\begin{align*}
	\sum_{Z\in [2, N]^{(n)}}|\Psi \cap Z^{k+m}|
	&\le
	\binom{N-1}{n}\sum_{i=0}^{m-1}\frac{n^{k+i+1}}{N-1} (2m)^{2m-1} \\
	&\le 
	\frac{(2mn)^{2k}}{2(N-1)} \binom{N-1}{n}
	<
	\binom{N-1}{n}\,.
\end{align*}
Consequently some set $Z\in [2, N]^{(n)}$ satisfies $\Psi \cap Z^{k+m} = \varnothing$. 
Due to the definition of $\Psi$ every such set $Z$ is $m$-purged.
\end{proof}

\subsection{Plan} \label{subsec:erplan}
Suppose that we have an equivalence relation $([N]^{(k)}, \equiv)$, where $N$ is, roughly speaking, 
$k-1$ times exponential in $n$, and that we want to find a subset $X\subseteq [N]$ of size~$n$ 
such that $(X^{(k)}, \equiv)$ is canonical. The material in the previous subsection suggests the 
following strategy. Letting $f\colon [N]^{(k+2)}\lra\Gamma$ denote the function associated 
with $\equiv$ we want to select appropriate subsets $[N]=Z_{k-1}\supseteq\dots\supseteq Z_0$ such that 
for each $m\in [0, k-1]$ the size of $Z_m$ is, roughly, $m$ times exponential in $n$ and the 
restriction of $f$ to $Z_m^{(k+2)}$ is governed by $(m+3)$-sets. 
Lemma~\ref{lem:gov} tells us that if we construct such a sequence of 
sets~$(Z_m)_{0\le m\le k-1}$ by means of an iterative applications of Lemma~\ref{lem:2150}, then 
we can achieve that the restrictions of $g^2_2, \dots, g^2_k$ to $Z_0^{(2)}$ are monotone. 
Now $|Z_0|$ depends polynomially on $n$, which severely restricts our possibilities as to how 
the argument can be continued. Certainly Lemma~\ref{lem:1731} allows us to synchronise 
$g_2, \dots, g_k$. Moreover, synchronising $g_1$ will only cost an additional square root;
essentially this is due to the formula $\ER^{(1)}(n)\le n^2$. Finally $k$ successive applications of 
Lemma~\ref{lem:pur} allow us to perform a complete purging, and by Lemma~\ref{lem:fin} we will 
thereby be done. 

Now if one completely ignores the last exponent $C_k$ for which one will end up 
proving $\ER^{(k)}(n)\le t_{k-1}(n^{C_k})$, then it is entirely immaterial in which order one 
carries the various steps of the argument out. But from a quantitative point of view it seems 
recommendable to perform each purging step as early as possible. Illustrating this point 
by means of an example we give a brief sketch as to how one can 
prove $\ER^{(4)}(n)\le t_3(Cn^{24})$.
\begin{enumerate}
	\item[$\bullet$] Start with an arbitrary equivalence relation $([N]^{(4)}, \equiv)$, where 
			$N=t_3(Cn^{24})$, and let $f\colon [N]^{(6)}\lra \Gamma$ be its associated 
			function.
	\item[$\bullet$] By Corollary~\ref{cor:0053} there is a set $Z_2\subseteq Z_3=[N]$
	      of size $t_2(C'n^{24})$ on which $f$ is governed by quintuples. 
	      Now $g^4_4$ is monotone by Lemma~\ref{lem:gov}. 
	\item[$\bullet$] Before doing anything else we appeal to Lemma~\ref{lem:pur} and get 
	      a $4$-purged set $P_2\subseteq Z_2$ of size $t_2(C''n^{24})$, where $C''$
	      is extremely close to $C'$. 
	\item[$\bullet$] Next we apply Lemma~\ref{lem:2150} and obtain $Z_1\subseteq P_2$
	      of size $t_1(C'''n^{24})$ such that~$g^3_3$,~$g^3_4$ are monotone and 
	      $f$ is governed by quadruples. 
	 \item[$\bullet$] Now Lemma~\ref{lem:pur} leads us to 
	 		a $3$-purged set $P_1\subseteq Z_1$ 
	 		of size $t_1(C''''n^{24})$.
	\item[$\bullet$] Only now we appeal to Lemma~\ref{lem:2150} again and take $Z_0\subseteq P_1$
			of size $C'''''n^8$ such that~$g^2_2$,~$g^2_3$,~$g^2_4$ are monotone and $f$ is governed 
			by triples. 
\end{enumerate}

The next lemma will tell us how to complete the argument once $Z_0$ is found. 
Before stating it we would like to point out that the two purging steps above only 
influence the constant $C$ we need to start with, but not the final exponent $24$. 
If we would move these two steps to the end of the above list, the argument remained 
valid except for the fact that we needed to start with $24\cdot 8^2$ instead of $24$.  

\begin{lemma}\label{lem:end}
	Let $n\ge k$, $N\ge (k-1)!(2n)^{2k}$, and let $([N]^{(k)},\equiv)$ be an equivalence relation  
	which is governed by triples and $i$-purged for every $i\in [3,k]$.  
	If the functions $g_2^2,\dots,g_k^2$ defined on $[2, N]^{(2)}$ 
	are monotone, then there exists a set $X\subseteq [N]$ 
	of size $n$ such that $(X^{(k)},\equiv)$ is canonical.
\end{lemma}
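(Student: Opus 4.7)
The plan is to extract a subset $X \subseteq [N]$ of size $n$ that satisfies both hypotheses of Lemma \ref{lem:fin}: constancy of every $g_i$ for $i \in [k]$, and $i$-purging for every $i \in [k]$. Since $i$-purging for $i \in \{3, \ldots, k\}$ is inherited from the hypothesis of Lemma \ref{lem:end}, only constancy of $g_1^2, \ldots, g_k^2$ and purging for $i \in \{1, 2\}$ remain to be established. First, I will apply Lemma \ref{lem:1731} with index set $I = \{2, \ldots, k\}$ to the monotone family $(g_i^2)_{i \in I}$ on $[2, N]$, extracting $Y_1 \subseteq [2, N]$ of size $n_1 = (2n)^2$ on which $g_2^2, \ldots, g_k^2$ are each constant; the hypothesis $N \ge (k-1)!(2n)^{2k}$ is tailored so that $N \ge (k-1)! \cdot n_1^k$ holds, exactly the input requirement of that lemma.

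Second, I will synchronise $g_1^2$ on $Y_1$ by an Erd\H{o}s--Rado-type argument at $k = 1$. The key observation is that the binary relation $R$ on $Y_1$ defined by $xRy \iff g_1^2(\min(x,y), \max(x,y)) = +1$ (extended reflexively) is an equivalence relation: transitivity comes from chaining the defining equivalences $\{a, c\} \equiv \{b, c\}$ of $k$-sets with common tails through the transitivity of $\equiv$ itself. The bound $\ER^{(1)}(m) \le (m-1)^2 + 1$ then yields $Y_2 \subseteq Y_1$ of size $2n$ on which either every pair is $R$-related ($g_1^2 \equiv +1$) or no pair is ($g_1^2 \equiv -1$). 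On $Y_2$, $1$-purging is automatic from constancy of $g_1^2$: condition~(iii) of Definition \ref{d:p} fails if $g_1^2 \equiv +1$, while condition~(iv) fails for distinct first elements if $g_1^2 \equiv -1$. Likewise, $2$-purging is vacuous if $g_2^2 \equiv +1$, and a short chain using $g_1^2 = +1$ to swap first elements in an equivalent pair and then invoking $g_2^2 = -1$ for a contradiction disposes of the case $g_1^2 \equiv +1, g_2^2 \equiv -1$. In these cases, Lemma \ref{lem:fin} applied to $Y_2$ delivers a canonical subset of size at least $n$.

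The main obstacle is the remaining hard case $g_1^2 \equiv g_2^2 \equiv -1$, in which automatic $2$-purging breaks down. Here I will invoke Lemma \ref{lem:pur} with $m = 2$, which is applicable because $\equiv$ is governed by triples $= (m+1)$-sets; its generic output-$(n+1)$ bound, however, requires input of size about $(4(n+1))^{2k}$, which exceeds $2n = |Y_2|$. The resolution is to reorder the three operations so that the $2$-purging is performed earlier---either directly on $Y_1$ of size $(2n)^2$ or on $[N]$ itself before the $g_1^2$-synchronisation---and the hypothesised budget $(k-1)!(2n)^{2k}$ is calibrated precisely to one such ordering, with the hard case recognised in advance so that the purging step can be moved forward in the argument.
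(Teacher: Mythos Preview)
Your overall strategy---synchronise $g_2^2, \ldots, g_k^2$ via Lemma~\ref{lem:1731}, then synchronise $g_1^2$ via the $\ER^{(1)}$ bound, then verify purging and invoke Lemma~\ref{lem:fin}---matches the paper's, and your treatment of $1$-purging and of $2$-purging in the cases $g_2^2\equiv +1$ and $(g_1^2,g_2^2)\equiv(+1,-1)$ is correct.

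The genuine gap is the hard case $g_1^2\equiv g_2^2\equiv -1$. Your proposed fix---apply Lemma~\ref{lem:pur} with $m=2$ at an earlier stage---cannot be reconciled with the hypothesis $N\ge (k-1)!(2n)^{2k}$. Lemma~\ref{lem:pur} costs a $(2k\,\cdot)^{2k}$ blow-up: purging on $[N]$ before Lemma~\ref{lem:1731} would require $N\ge \bigl(2k(k-1)!(2n)^{2k}\bigr)^{2k}$, while purging on $Y_1$ of size $(2n)^2$ leaves at most roughly $(2n)^{1/k}/(2k)$ elements. Nor can the hard case be ``recognised in advance'': whether $g_1^2$ and $g_2^2$ end up constant with value $-1$ depends on which subset the synchronisation produces, so you cannot decide beforehand to purge only in that case. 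The stated bound is calibrated to a much cheaper manoeuvre.

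The paper's manoeuvre is this. After obtaining $Z$ of size $2n+1$ on which every $g_i^2$ is constant, take $X_\star=\{z_1,z_3,\ldots,z_{2n+1}\}$, the set of every second element. Between any two consecutive members of $X_\star$ there is an intermediate element of $Z$. Given a putative failure of $2$-purging, say $\{a_1,a_2,c_3,\ldots,c_k\}\equiv\{b_1,b_2,c_3,\ldots,c_k\}$ with $a_2<b_2<c_3$ in $X_\star$, pick $q\in Z$ with $b_2<q<c_3$. Since $f$ is governed by triples, replacing $b_2$ by $q$ in the ambient $(k+2)$-set $\{a_1,a_2,b_1,b_2,c_3,\ldots,c_k\}$ leaves the $f$-value unchanged, whence also $\{a_1,a_2,c_3,\ldots,c_k\}\equiv\{b_1,q,c_3,\ldots,c_k\}$. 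Transitivity gives $\{b_1,b_2,c_3,\ldots,c_k\}\equiv\{b_1,q,c_3,\ldots,c_k\}$, i.e.\ $g_2(b_1,b_2,q,c_3,\ldots,c_k)=+1$, contradicting $g_2^2\equiv -1$. Thus $2$-purging is obtained at the cost of a single factor of two, which is exactly what the bound $(k-1)!(2n)^{2k}$ accommodates.
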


\begin{proof}
	Due to the monotonicity of $g_2^2,\dots,g_k^2$ and $N-1\ge  (k-1)!(4n^2-1)^{k}$ 
	Lemma \ref{lem:1731} yields a subset $Y\subseteq [2, N]$ of size $|Y|=4n^2-1$ such that 	
	the functions $g_2^2,\dots,g_k^2$ are constant on $Y^{(2)}$. 
	Denote the $k-1$ largest elements of $Y$ by $y_1<\dots<y_{k-1}$  
	and define an equivalence relation $\sim$
	on $Y\setminus\{y_1,\dots,y_{k-1}\}$ by
	\[
		a \sim b 
		\,\,\,\Longleftrightarrow\,\,\, 
		\{a,y_1,\dots,y_{k-1}\} \equiv \{b,y_1,\dots,y_{k-1}\}\,.
	\]
	In view of $n\ge k\ge 2$ we have $|Y\setminus \{y_1,\dots,y_{k-1}\}| = 4n^2 -k \ge (2n-k+1)^2+1$ 
	and, consequently, there exists a set $Z^- \subseteq Y \setminus \{y_1,\dots,y_{k-1}\}$ 
	of size $2n-k+2$ which is either contained in an equivalence class of $\sim$ or consists 
	of numbers that are mutually non-equivalent with respect to $\sim$. In 
	both cases  the set $Z= Z^- \dcup \{y_1,\dots,y_{k-1}\}$ has the property that the restriction 
	of $g^2_1$ to $Z^{(2)}$ is constant. Notice that the cardinality of $Z$ is~$2n+1$. 
	Enumerating the elements of $Z$ in increasing order we write $Z=\{z_1, \dots, z_{2n+1}\}$. 
	
	The remainder of the proof we shall show that  
	\[
		X_\star=\bigl\{z_1, z_3, \dots, z_{2n+1}\bigr\}
	\]
	has the property that $(X_\star^{(k)}, \equiv)$ is $1$-purged and $2$-purged.
	In the light of Lemma~\ref{lem:fin} this will imply that the $n$-element set 
	$X=X_\star\setminus\{z_{2n+1}\}$ is as desired. 
		
	Starting with the former goal we let $\eps_1\in\Phi$ be the constant value that $g^2_1$
	attains on $X_\star^{(2)}$. 
	Because of Definition~\ref{d:p}~\ref{p:3} we may suppose that $\eps_1= -1$. 
	Now let 
	\[
		a_1 \leq b_1 < c_2 < \dots < c_{k+1}
	\]
	be $k+2$ members of $X_\star$ satisfying 
	$\{a_1,c_2,\dots, c_{k}\} \equiv \{b_1,c_2,\dots, c_{k}\}$. 
	As the assumption $a_1< b_1$ would yield $\eps_1=g_1(a_1 ,b_1, c_2, \dots, c_{k})= 1$, 
	we have indeed $a_1=b_1$, as required. 
	
	It remains to show that $(X_\star^{(k)},\equiv)$ is $2$-purged as well. 
	As before, we may suppose that the constant value $\eps_2$ attained by $g_2$ on $X_\star^{(k+1)}$
	is~$-1$. Assume for the sake of contradiction that there exist $k+3$ members $a_1<a_2 <b_2$ 
	and $b_1<b_2<c_3<\dots<c_{k+1}$ of $X$ such that  
	\begin{equation}\label{eq:4809}
		\{a_1,a_2,c_3,\dots,c_{k}\} 
		\equiv 
		\{ b_1,b_2,c_3,\dots,c_k\}\,.
	\end{equation}
	Pick a number $q\in Z$ such that $b_2<q<c_3$. Since $f$ is governed by triples, we have
	\[ 
		f(a_1,a_2,b_1,b_2,c_3,\dots,c_{k}) 
		= 
		f(a_1,a_2,b_1,q,c_3,\dots,c_{k})\,.
	\]
	So~\eqref{eq:4809} entails 
	\[	
		\{a_1,a_2,c_3,\dots,c_{k}\} 
		\equiv  
		\{b_1,q,c_3,\dots,c_k\} 
	\]
	and altogether we obtain 
	\[ 
		\{b_1,b_2,c_3,\dots,c_k\}
		\equiv  
		\{b_1,q,c_3,\dots,c_k\}\,,
	\]
	which leads us to the contradiction
	\[
		\eps_2=g_2(b_1,b_2,q,c_3,\dots,c_k)=1\,. \qedhere 
	\]
\end{proof}

\subsection{Canonisation} \label{subsec:caners}
It remains to execute the plan discussed in the foregoing subsection. 
Given $n\ge k\ge2$ we define positive integers $N_0, P_1, N_1, \dots, P_{k-2}, N_{k-2}, N_{k-1}$ 
according to the following recursive rules. 
\begin{align*}
	N_0&=(k-1)!(2n)^{2k}+(k+1)\,, \\
	P_m&=(2|\Gamma|)^{N_{m-1}^{m+2}} \cdot (2kN_{m-1}^{m})^{kN_{m-1}^{m}} \quad \text{ for }m\in [k-2]\,, \\
	N_{m}&= (2kP_m)^{2k}+(k+1) \quad \text{ for }m\in [k-2]\,,\\
	N_{k-1}&= |\Gamma|^{N_{k-2}^{k+1}} \,.
	\end{align*}
	We want to prove that if $N\ge N_{k-1}$ then for every equivalence relation $([N]^{(k)},\equiv)$ 	
	there exists a set $X\subseteq[N]$ of size $n$ such that $(X^{(k)},\equiv)$ is canonical. 
	This will imply 
	\begin{equation}\label{eq:ende}
		\ER^{(k)}(n)\le N_{k-1}\le t_{k-1}(C_kn^{6k})\,, 
	\end{equation}
	where $C_k$ depends only on $k$. Indeed, we have $P_1=t_1(O(n^{6k}))$ and 
	$N_m=t_m(O(n^{6k}))$ for every $m\in [k-1]$.
	
	Let $f\colon[N]^{k+2}\lra\Gamma$ be the function associated to a given equivalence relation 
	$([N]^{(k)}, \equiv)$, where $N\ge N_{k-1}$. An initial application of Corollary~\ref{cor:0053}
	leads to a set $Z'_{k-2}\subseteq [N]$ such that $({Z'}^{(k)}_{k-2}, \equiv)$ is governed 
	by $(k+1)$-sets and $|Z'_{k-2}|=N_{k-2}$. 
	According to Lemma~\ref{lem:gov} the set $Z_{k-2}\subseteq Z'_{k-2}$
	that arises when one removes the smallest and the $k$ largest elements of~$Z_{k-2}'$ has 
	the property that $g^k_k\colon {Z}^{(k)}_{k-2}\lra \Phi$ is monotone. Notice that 
	$|Z_{k-2}|=N_{k-2}-(k+1)$.

\begin{claim}\label{clm:9999}
	For every $m \in [0,k-2]$ there exists a set $Z_m\subseteq [N]$ of size $|Z_m|=N_m-(k+1)$ 
	with the following properties:
	\begin{enumerate}[label=\rmlabel]
		\item\label{p:5} $(Z_m^{(k)},\equiv)$ is governed by $(m+3)$-sets.
		\item\label{p:6} For every $i \in [m+2,k]$ the function $g_i^{m+2}$ is monotone.
		\item\label{p:7} If $i \in [m+3,k]$, then $(Z_m^{(k)},\equiv)$ is $i$-purged.
	\end{enumerate}
\end{claim}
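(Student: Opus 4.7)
The plan is to prove Claim~\ref{clm:9999} by downward induction on $m$. The base case $m=k-2$ has already been verified in the paragraph immediately preceding the claim: $Z_{k-2}$ of size $N_{k-2}-(k+1)$ was produced from $Z'_{k-2}$ by Corollary~\ref{cor:0053} and a trimming step enabled by Lemma~\ref{lem:gov}, and condition~\ref{p:7} is vacuous at this level. So I would only need to handle the induction step from $m\in[k-2]$ down to $m-1$. The construction of $Z_{m-1}$ from $Z_m$ would proceed in three stages that directly mirror the way $P_m$ and $N_m$ appear in the recursive definitions.

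First, I would apply Lemma~\ref{lem:pur} to $(Z_m^{(k)},\equiv)$ with $m+2$ in the role of its $m$ and with target size $P_m$. This is legitimate because $(Z_m^{(k)},\equiv)$ is governed by $((m+2)+1)$-sets by~\ref{p:5} and $|Z_m|=N_m-(k+1)=(2kP_m)^{2k}$ by the recursive formula for $N_m$. The result is a subset $P'_m\subseteq Z_m$ of size $P_m$ on which $\equiv$ is $(m+2)$-purged. Every other property from the inductive hypothesis carries over to $P'_m$ under restriction: $\equiv$ remains governed by $(m+3)$-sets, the colourings $g_i^{m+2}$ for $i\in[m+2,k]$ remain monotone, and $i$-purgedness survives for all $i\in[m+3,k]$.

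Next, I would invoke Lemma~\ref{lem:2150} with $X=P'_m$, target $n=N_{m-1}$, parameters $r=m+3$ and $s=m+2$, the function $f^{m+3}\colon {P'_m}^{(m+3)}\lra\Gamma$ in the role of $f$, index set $I=[m+2,k]$ (so $|I|\le k$), and the family of monotone colourings $(g_i^{m+2})_{i\in I}$ supplied by Stage~1. The size requirement
\[
	|X|\ge (2|\Gamma|)^{N_{m-1}^{m+2}}\cdot (2|I|N_{m-1}^{m})^{|I|N_{m-1}^{m}}
\]
matches the definition of $P_m$, so the hypothesis is met. The output is a set $Z'_{m-1}\subseteq P'_m$ of size $N_{m-1}$ together with $f^{m+2}$ governing $f^{m+3}$ (so, by transitivity of the governance relation, $\equiv$ on $({Z'_{m-1}})^{(k)}$ is governed by $(m+2)$-sets) and, for each $i\in[m+2,k]$, a monotone colouring $g_i^{m+1}$ governing $g_i^{m+2}$. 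All purgedness properties persist under restriction.

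Finally, I would trim $Z'_{m-1}$ by removing its smallest element and its $k$ largest elements to obtain $Z_{m-1}$ of size $N_{m-1}-(k+1)$. By Lemma~\ref{lem:gov}, applied with $m+1$ in the role of its $m$, the function $g_{m+1}^{m+1}$ becomes (increasing, hence) monotone on $Z_{m-1}$. This supplies the one piece of~\ref{p:6} at level $m-1$ not already given by Stage~2, while every other property carries through the trimming unharmed. The hard part of this argument will be bookkeeping, not ideas: I must check at each stage that the requisite features in~\ref{p:5}\,--\,\ref{p:7} are either gained at precisely the right moment or else preserved under passage to subsets, and that the size chain $N_m\to P_m\to N_{m-1}$ exactly fits the hypotheses of Lemmas~\ref{lem:pur} and~\ref{lem:2150} for the chosen $r$, $s$, and $|I|$. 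Once this coordination is verified, there is nothing else to prove, because being governed, being monotone, and being $i$-purged are all stable under restriction.
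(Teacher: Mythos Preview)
Your proposal is correct and follows essentially the same three-stage route as the paper: first apply Lemma~\ref{lem:pur} to gain $(m+2)$-purgedness on a set of size $P_m$, then apply Lemma~\ref{lem:2150} (with $r=m+3$, $s=m+2$, and $I=[m+2,k]$) to reduce governance to $(m+2)$-sets while keeping the $g_i^{m+1}$ monotone, and finally trim and invoke Lemma~\ref{lem:gov} to secure monotonicity of $g_{m+1}^{m+1}$. Your bookkeeping of sizes, parameters, and the preservation of all three properties under restriction is accurate.
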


Observe that the case $m=0$ yields a set $Z_0$ of size $|Z_0|=(k-1)!(2n)^{2k}$.  
Thus Lemma~\ref{lem:end} shows that Claim~\ref{clm:9999} implies~\eqref{eq:ende}.

\begin{proof}[Proof of Claim \ref{clm:9999}]
	We argue by decreasing induction on $m$. In the base case $m=k-2$ clause~\ref{p:7} holds 
	vacuously and the set $Z_{k-2}$ defined above has the desired properties. 
	
	Now suppose that for some $m\in[k-2]$ we have already found a subset $Z_m\subseteq [N]$ 
	of size $N_m-(k+1)=(2kP_m)^{2k}$ fulfilling~\ref{p:5},~\ref{p:6}, and~\ref{p:7}. 
	Owing to Lemma~\ref{lem:pur} there exists a set $Q_m\subseteq Z_m$ 
	of size $|Q_m|=P_m$ such that $(P_{m}^{(k)},\equiv)$ is $(m+2)$-purged.
	Next we apply Lemma~\ref{lem:2150} to $f^{m+3} \colon P_m^{(m+3)} \lra \Gamma$ 
	and the family $(g_i^{m+2})_{m+2\le i\le k}$ of monotone colourings. 
	This yields a set $Z'_{m-1}\subseteq P_m$ of size $N_{m-1}$ such that 
	$({Z'}_m^{(k)},\equiv)$ is governed by $(m+2)$-sets and for every $i \in [m+2,k]$ the 
	function~$g_i^{m+1}$ is monotone. Finally Lemma~\ref{lem:gov} tells us that the 
	set $Z_{m-1}\subseteq Z'_{m-1}$ obtained by removing the smallest and the $k$ largest elements 
	from $Z'_{m-1}$ has the desired properties.
\end{proof}

\subsection{Acknowledgement} We would like to thank the referee for an extremely 
careful reading of this article and providing the alternative proof of 
Lemma~\ref{lem:1730}. 

\begin{bibdiv}
\begin{biblist}

\bib{Lang}{article}{
	author={Buci\'{c}, Matija},
	author={Sudakov, Benny}, 
	author={Tran, Tuan}, 
	title={Erd\H{o}s-Szekeres theorem for multidimensional arrays}, 
	eprint={1910.13318},
	note={Submitted},
}

\bib{CFS10}{article}{
   author={Conlon, David},
   author={Fox, Jacob},
   author={Sudakov, Benny},
   title={Hypergraph Ramsey numbers},
   journal={J. Amer. Math. Soc.},
   volume={23},
   date={2010},
   number={1},
   pages={247--266},
   issn={0894-0347},
   review={\MR{2552253}},
   doi={10.1090/S0894-0347-09-00645-6},
}

\bib{DGPV}{article}{
   author={Deuber, W.},
   author={Graham, R. L.},
   author={Pr\"{o}mel, H. J.},
   author={Voigt, B.},
   title={A canonical partition theorem for equivalence relations on ${\bf
   Z}^{t}$},
   journal={J. Combin. Theory Ser. A},
   volume={34},
   date={1983},
   number={3},
   pages={331--339},
   issn={0097-3165},
   review={\MR{700039}},
   doi={10.1016/0097-3165(83)90067-5},
}
		
\bib{Dil}{article}{
   author={Dilworth, R. P.},
   title={A decomposition theorem for partially ordered sets},
   journal={Ann. of Math. (2)},
   volume={51},
   date={1950},
   pages={161--166},
   issn={0003-486X},
   review={\MR{32578}},
   doi={10.2307/1969503},
}

\bib{Er47}{article}{
   author={Erd\H{o}s, P.},
   title={Some remarks on the theory of graphs},
   journal={Bull. Amer. Math. Soc.},
   volume={53},
   date={1947},
   pages={292--294},
   issn={0002-9904},
   review={\MR{19911}},
   doi={10.1090/S0002-9904-1947-08785-1},
}

\bib{EG80}{book}{
   author={Erd\H{o}s, P.},
   author={Graham, R. L.},
   title={Old and new problems and results in combinatorial number theory},
   series={Monographies de L'Enseignement Math\'{e}matique [Monographs of
   L'Enseignement Math\'{e}matique]},
   volume={28},
   publisher={Universit\'{e} de Gen\`eve, L'Enseignement Math\'{e}matique, Geneva},
   date={1980},
   pages={128},
   review={\MR{592420}},
}

\bib{EH72}{article}{
   author={Erd\H{o}s, P.},
   author={Hajnal, A.},
   title={On Ramsey like theorems. Problems and results},
   conference={
      title={Combinatorics (Proc. Conf. Combinatorial Math., Math. Inst.,
      Oxford, 1972)},
   },
   book={
      publisher={Inst. Math. Appl., Southend-on-Sea},
   },
   date={1972},
   pages={123--140},
   review={\MR{0337636}},
}
		
\bib{EHR65}{article}{
   author={Erd\H{o}s, P.},
   author={Hajnal, A.},
   author={Rado, R.},
   title={Partition relations for cardinal numbers},
   journal={Acta Math. Acad. Sci. Hungar.},
   volume={16},
   date={1965},
   pages={93--196},
   issn={0001-5954},
   review={\MR{202613}},
   doi={10.1007/BF01886396},
}

\bib{ER50}{article}{
   author={Erd\H{o}s, P.},
   author={Rado, R.},
   title={A combinatorial theorem},
   journal={J. London Math. Soc.},
   volume={25},
   date={1950},
   pages={249--255},
   issn={0024-6107},
   review={\MR{37886}},
   doi={10.1112/jlms/s1-25.4.249},
}

\bib{ER52}{article}{
   author={Erd\H{o}s, P.},
   author={Rado, R.},
   title={Combinatorial theorems on classifications of subsets of a given
   set},
   journal={Proc. London Math. Soc. (3)},
   volume={2},
   date={1952},
   pages={417--439},
   issn={0024-6115},
   review={\MR{65615}},
   doi={10.1112/plms/s3-2.1.417},
}

\bib{ES35}{article}{
   author={Erd\H{o}s, P.},
   author={Szekeres, G.},
   title={A combinatorial problem in geometry},
   journal={Compositio Math.},
   volume={2},
   date={1935},
   pages={463--470},
   issn={0010-437X},
   review={\MR{1556929}},
}

\bib{Fox}{misc}{
	author={Fox, J.},
	note={Personal communication to V.~R\"{o}dl and M.~Sales},
	date={2019},
}

\bib{GLR}{article}{
   author={Graham, R. L.},
   author={Leeb, K.},
   author={Rothschild, B. L.},
   title={Ramsey's theorem for a class of categories},
   journal={Proc. Nat. Acad. Sci. U.S.A.},
   volume={69},
   date={1972},
   pages={119--120},
   issn={0027-8424},
   review={\MR{306009}},
   doi={10.1073/pnas.69.1.119},
}
		
\bib{GLK2}{article}{
  author={Graham, R. L.},
  author={Leeb, K.},
  author={Rothschild, B. L.},
  title={Ramsey's theorem for a class of categories},
  journal={Advances in Math.},
  volume={8},
  date={1972},
  pages={417--433},
  issn={0001-8708},
  review={\MR{306010}},
  doi={10.1016/0001-8708(72)90005-9},
}

\bib{GRS90}{book}{
   author={Graham, Ronald L.},
   author={Rothschild, Bruce L.},
   author={Spencer, Joel H.},
   title={Ramsey theory},
   series={Wiley-Interscience Series in Discrete Mathematics and
   Optimization},
   edition={2},
   note={A Wiley-Interscience Publication},
   publisher={John Wiley \& Sons, Inc., New York},
   date={1990},
   pages={xii+196},
   isbn={0-471-50046-1},
   review={\MR{1044995}},
}

\bib{LR95}{article}{
   author={Lefmann, Hanno},
   author={R\"{o}dl, Vojt\v{e}ch},
   title={On Erd\H{o}s-Rado numbers},
   journal={Combinatorica},
   volume={15},
   date={1995},
   number={1},
   pages={85--104},
   issn={0209-9683},
   review={\MR{1325273}},
   doi={10.1007/BF01294461},
}
		
\bib{NPRV85}{article}{
   author={Ne\v{s}et\v{r}il, J.},
   author={Pr\"{o}mel, H. J.},
   author={R\"{o}dl, V.},
   author={Voigt, B.},
   title={Canonizing ordering theorems for Hales-Jewett structures},
   journal={J. Combin. Theory Ser. A},
   volume={40},
   date={1985},
   number={2},
   pages={394--408},
   issn={0097-3165},
   review={\MR{814422}},
   doi={10.1016/0097-3165(85)90098-6},
}
		
\bib{NR17}{article}{
   author={Ne\v{s}et\v{r}il, Jaroslav},
   author={R\"{o}dl, Vojt\v{e}ch},
   title={Statistics of orderings},
   journal={Abh. Math. Semin. Univ. Hambg.},
   volume={87},
   date={2017},
   number={2},
   pages={421--433},
   issn={0025-5858},
   review={\MR{3696159}},
   doi={10.1007/s12188-016-0174-x},
}

\bib{Ramsey30}{article}{
   author={Ramsey, Frank Plumpton},
   title={On a problem of formal logic},
   journal={Proceedings London Mathematical Society},
   volume={30},
   date={1930},
   number={1},
   pages={264--286},
         doi={10.1112/plms/s2-30.1.264},
}
		
\bib{Sh96}{article}{
   author={Shelah, Saharon},
   title={Finite canonization},
   journal={Comment. Math. Univ. Carolin.},
   volume={37},
   date={1996},
   number={3},
   pages={445--456},
   issn={0010-2628},
   review={\MR{1426909}},
}
						
\end{biblist}
\end{bibdiv}
\end{document}